\documentclass{article}

\usepackage[english]{babel}

\usepackage[letterpaper,top=2cm,bottom=2cm,left=3cm,right=3cm,marginparwidth=1.75cm]{geometry}

\usepackage{amsmath}
\usepackage{amsthm}
\usepackage{amssymb}
\usepackage{graphicx}
\usepackage{multirow}
\usepackage{float}
\usepackage{tikz, tikz-3dplot, pgfplots}
\usepackage{array}
\usepackage{tabulary}
\usepackage{dynkin-diagrams}
\usepackage[colorlinks=true, allcolors=blue]{hyperref}
\usepackage{enumerate}
\usepackage{longtable}
\usepackage{pdflscape}

\usetikzlibrary{calc,3d, perspective,positioning,quotes}

\newcolumntype{K}{>{\centering\arraybackslash}p{1.75cm}}
\newcolumntype{J}{>{\centering\arraybackslash}p{4.5cm}}

\newcommand{\pt}[1]{
    \filldraw[black] (#1) circle (0.75pt);
}

\newcommand{\ptb}[1]{
    \filldraw[black] (#1) circle (1.25pt);
}

\definecolor{backgray}{rgb}{0.3,0.3,0.3}
\newcommand{\pd}[1]{ 
    \filldraw[backgray] (#1) circle (0.75pt);
}

\newcommand{\ld}[2]{
    \draw[dash pattern=on 2pt off 2pt, gray, thin] (#1) -- (#2);
}

\newcommand{\ls}[2]{
    \draw[gray, semithick] (#1) -- (#2);
}

\newtheorem{theorem}{Theorem}[section]
\newtheorem{lemma}[theorem]{Lemma}
\newtheorem{corollary}[theorem]{Corollary}

\theoremstyle{definition}
\newtheorem{definition}[theorem]{Definition}

\usepackage[sorting=ynt]{biblatex}
\title{The complete set of noble polyhedra}
\author{Connor Hill}

\begin{document}
\maketitle

\begin{abstract}
    We provide a complete enumeration of the finite polyhedra that are noble, that is, polyhedra that are vertex transitive and facet transitive, through a computer-assisted proof which relates the possible existence of noble polyhedra to the roots of certain univariate or bivariate cubic polynomials. This is done through a parametrization of the orbits of point groups in $\mathbb{R}^3$, which naturally leads to a notion of \textit{criticality} for certain orbits of a given point group, and a related equivalence relation $\equiv_c$ on the set of orbits. We show that within a given equivalence class of orbits under this relation, noble facetings are equivalent, and furthermore that there are a finite number of such equivalence classes. This gives a finite number of test cases, which may be tested through computer search. In total, we find that there are exactly 146 noble polyhedra in addition to the previously known infinite sets of stephanoids (crown polyhedra) and disphenoids.
\end{abstract}

\section{Introduction}

The regular polyhedra, and more specifically the five convex regular polyhedra, have been a subject of study for millennia. These five convex regular polyhedra, known as the Platonic solids, have long been associated with and recognized by their high amounts of symmetry; in particular, they are the only convex polyhedra whose symmetry groups act transitively on their vertices, edges, and faces simultaneously.

It was not until the 17th century that the idea of regular polyhedra which were not convex ("regular star polyhedra") were first fully considered when Kepler discovered the small and great stellated dodecahedra $\{5/2,5\}$ and $\{5/2,3\}$ and recognized them as regular. These two polyhedra as well as the final two regular star polyhedra $\{5,5/2\}$ and $\{3,5/2\}$ were rediscovered by Poinsot in 1809, and soon after this Cauchy proved the set of 4 regular star polyhedra complete \cite[p.~5-6]{ARP}. Note that this generalization also allows polyhedra to have self-intersections in addition to being nonconvex (in fact, all four of these star polyhedra also contain self-intersections).

This enumeration by Cauchy led in part to investigations by Hess beginning in 1875 of generalizations of the definition of regular polyhedra, one of which being what we now refer to as a noble polyhedron \cite{hessinitial}. Noble polyhedra are defined as being simultaneously vertex transitive and face transitive, so clearly all regular polyhedra are also noble. When restricted to convex polyhedra, the only nonregular noble polyhedra are the \textit{disphenoids} which consist of four congruent (but not necessarily equilateral) triangles, and these have solely axial symmetry. The problem becomes much less trivial if nonconvexity is permitted. Through several papers (see \cite{hessinitial}, \cite{hess1}, \cite{hess2}) Hess discovered 16 new nonregular noble polyhedra and an infinite set of polyhedra known as \textit{stephanoids} with prismatic symmetry, all of which are nonconvex and contain self-intersections. Later Brückner (see \cite{bruckner1}, \cite{bruckner2}, \cite{bruckner3}, \cite{bruckner4}) discovered a total of 10 other new noble polyhedra between 1900 and 1907. There would be no more discoveries of new noble polyhedra satisfying the criteria in Section 2 for over a century, although several near-examples were later found by Grünbaum, who also notably coined the term "noble" \cite{hollowfaces}.

In 2008 Robert Webb would discover a new noble polyhedron as a faceting of the uniform snub cube while experimenting with his software Stella \cite{stella4Dexploration}. This would later spark a series of discoveries in 2020 of new noble polyhedra by Mikloweit and a user of the Stella forums, done by utilizing the tools provided by Stella software. In total, thay found 33 new examples of noble polyhedra. During the creation of this paper, two more noble polyhedra (sD-10.1 and sD-12.1) were independently discovered by Ben Klein.

These more recent discoveries bring into question exactly how many noble polyhedra may exist, and although these figures had been discovered through various different means, no rigorous algorithm to determine the completeness of the set was known. In this paper we resolve this problem by relating the existence of noble polyhedra to roots of polynomials which encode the property of certain sets of points being coplanar. Using results involving these polynomials,  we prove the following main theorems:
\begin{theorem}
     Under the definitions supplied in Section 2, the only noble polyhedra with a prismatic symmetry group are the stephanoids and disphenoids. Other than these, there are a finite number of noble polyhedra up to the canonical mapping given in Definition \ref{def:canonicalmapping}.
\label{thm:maincomputerless}
\end{theorem}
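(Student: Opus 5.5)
The plan is to reduce everything to orbits of finite point groups. A noble polyhedron $P$ is vertex transitive, so its vertex set is a single orbit $Gv$ of its symmetry group $G$, which is a finite subgroup of $O(3)$. It is also facet transitive, so its faces form a single $G$-orbit of planar polygons, each with vertices drawn from $Gv$. Hence $P$ is a \emph{noble faceting} of the orbit $Gv$. The finite point groups split into the seven infinite prismatic and axial families and the finitely many polyhedral groups ($T, T_d, T_h, O, O_h, I, I_h$). I will treat these two cases separately.

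\textbf{Prismatic case.} Here $G$ preserves an axis. I will parametrize a generic orbit by height $h$, radius $r$ and angular offset $\theta$ relative to the mirrors or rotation axes. The vertices then lie on at most two horizontal circles at heights $\pm h$, each carrying $n$ or $2n$ points. A face plane must contain at least three vertices. Since a plane meets each circle in at most two points, a face has at most four vertices: two on the top circle and two on the bottom. Its vertices either all lie on one circle (excluded, since the result is not a closed polyhedron with every edge shared by two faces, except in degenerate dihedral configurations), or they form a triangle or an isosceles trapezoid spanning both circles. Triangular faces with $|Gv|=4$ give the disphenoids. For larger orbits, vertex and edge incidence conditions (each edge lies in exactly two faces, and the vertex figure is a single cycle) force the quadrilateral crown pattern that defines the stephanoids. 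A short case analysis over the prismatic group types then yields exactly the stephanoids and disphenoids.

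\textbf{Polyhedral case.} Only finitely many groups occur, and each has finitely many orbit types (vertex, edge, face stabilizers and generic). The orbits of a given type form a low-dimensional family (dimension at most 2 after scaling), parametrized by the representative point $v$. For each $G$, and each finite choice of group elements $g_1,\dots,g_k$ specifying which orbit points form a face, coplanarity of four points $g_iv$ is a determinant condition. This condition is a polynomial in the parameters of $v$, cubic in the coordinates. A faceting is determined by the combinatorial data of which subsets of orbit points are coplanar. So I will define $v\equiv_c w$ when exactly the same coplanarity determinants vanish at $v$ and at $w$. Within an $\equiv_c$ class, noble facetings correspond bijectively via the canonical mapping of Definition \ref{def:canonicalmapping}, since facet and edge incidence data is preserved.

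The remaining step is finiteness of the classes. The parameter space is stratified by the zero sets of finitely many nonzero polynomials, one for each of the finitely many $4$-subsets of a finite orbit. A finite arrangement of real algebraic curves in a compact 2-dimensional parameter space (the sphere modulo scaling, reduced to a fundamental domain) has finitely many strata. These strata are the open cells, the curve arcs and the intersection points, i.e.\ the \emph{critical} orbits. Each stratum is a union of $\equiv_c$ classes, and on each stratum the vanishing pattern is constant. Thus there are finitely many classes, hence finitely many noble polyhedra up to the canonical mapping.

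\textbf{Main obstacle.} I expect the hardest part to be ensuring that no determinant polynomial vanishes identically on a component in a way that breaks the stratification, and handling the degenerate strata. One case is coincident orbit points, where the orbit type changes. Another is a polynomial vanishing identically, which means every orbit in the family has that coplanarity. My first attempt would be to discard identically vanishing polynomials, since they impose no condition, and to treat lower-dimensional orbit types as separate families. The prismatic combinatorics, in particular ruling out exotic face-orbit patterns for large $n$, also needs care but should be elementary.
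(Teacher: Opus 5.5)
The gap is in the prismatic half. That half is the whole first sentence of the theorem, and you replace it with ``a short case analysis'' without the ideas that analysis needs.

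First, nothing in your sketch excludes triangular faces on orbits with more than four points; you simply assert that triangles go with $|Gv|=4$. What is needed is an upper bound on the number $N$ of faces at a vertex, together with a count linking $N$, the face size $k$ and the order $\varphi$ of a face stabilizer.
\begin{itemize}
\item The paper gets the bound from duality. The dual of a prismatic noble polyhedron is again a nondegenerate prismatic noble polyhedron: two coplanar faces not sharing an edge would put at least five vertices in a non-base plane. So the dual's faces, which are the vertex figures of $\mathcal{P}$, have at most four sides, i.e.\ $N\le 4$.
\item The count is $N=|\Gamma(\mathcal{P})|k/(\varphi |V|)$. Over the prismatic groups acting on prismatic orbit types this gives $N=k/\varphi$ or $N=2k/\varphi$.
\item For $k=3$ only $N=3$ survives, which is the abstract tetrahedron.
\end{itemize}
Without the duality bound, your sketch offers no reason to exclude, say, six triangles at each vertex ($\varphi=1$ on a P$n$ orbit under $*22n$).

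Second, for quadrilaterals, ``isosceles trapezoid'' describes only the vertex set. A stephanoid face traverses those four points as a crossed quadrilateral. Reaching $\operatorname{PC}(n,p,q)$ and $\operatorname{AC}(n,p,q)$ requires showing:
\begin{itemize}
\item no edge lies in a base plane;
\item $N=k=4$ forces $\varphi=2$, hence a P$n$ orbit under $*22n$ or an A$n$ orbit under $2{*}n$;
\item the nontrivial face symmetry, which extends to a symmetry of $\mathcal{P}$, must be a mirror through the main axis;
\item the index bookkeeping yields $2p-n<2q<p<n$, respectively $q$ odd with $2p-n<q<p<n$.
\end{itemize}
This is the substance of Section 4 and is not a routine incidence check.

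On the nonprismatic half, your $\equiv_c$ and the transport of facetings by the canonical mapping match Theorem~\ref{thm:equivalencepreservesfaces}. Your stratification argument for finiteness differs from the paper's irreducible-factor argument and is sound. However, you state the containment backwards: a constant vanishing pattern on each stratum makes each class a union of strata, not each stratum a union of classes. In fact neither argument is needed. A class is determined by which index $4$-tuples have vanishing determinant, and that is a subset of a finite set. So an orbit type with $k$-point orbits has at most $2^{k^4}$ classes. This also removes the ``main obstacle'' you anticipated about identically vanishing polynomials and shared components.
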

Using this result, we design and run a computer algorithm to enumerate the precise number of these exceptional cases, finding the following:
\begin{theorem}
    Under the definitions supplied in Section 2 and discounting the stephanoids and disphenoids, there are exactly 146 noble polyhedra up to similarity.
\label{thm:main}
\end{theorem}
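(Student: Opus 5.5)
The plan is to reduce Theorem \ref{thm:main} to a finite computation, with Theorem \ref{thm:maincomputerless} providing the finiteness. I would take it as given that the prismatic case yields only stephanoids and disphenoids, so only the polyhedral point groups remain: $T$, $T_d$, $T_h$, $O$, $O_h$, $I$, $I_h$. For each such group $G$, a noble polyhedron is determined by a vertex orbit $Gv$ together with a facet orbit $GF$. Here $F$ is a planar cycle of points of $Gv$, and we require that every vertex of $Gv$ appears with a consistent valency and that the vertex figure is a single cycle, which rules out compounds. Using the parametrization of orbits by a point $v$ in a fundamental region, I would fix the equivalence classes under $\equiv_c$. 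The critical orbits are those $v$ at which some extra set of at least four orbit points becomes coplanar, meaning $v$ lies on the zero set of one of the univariate or bivariate cubic coplanarity polynomials. By the result that noble facetings are constant on each class, and that there are finitely many classes, it suffices to test one representative per class and per group.

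The computational steps, in order, are as follows. First, for each $G$ and each generic stratum (general position, points on mirrors or on rotation axes), list all 4-tuples of orbit points up to symmetry. Write down the coplanarity determinant as a polynomial in the orbit parameters, factor it, and discard the factors that vanish identically. Second, compute the finite set of critical points of each class. These are the intersections of pairs of curves in the 2-parameter case, found by resultants, or the roots in the 1-parameter case. Then choose one representative per class, either the critical point itself or a generic point of a stratum where no extra coplanarity occurs. Third, at each representative, enumerate the planes through at least three orbit points that contain at least three points. For each plane, enumerate the Hamiltonian cycles on its points that could serve as a facet, up to the stabilizer. Keep those $F$ whose orbit $GF$ covers every edge exactly twice and produces a connected vertex figure at each vertex. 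Fourth, check that the symmetry group of the result is exactly $G$, not a proper subgroup, and identify polyhedra arising from different groups or different representatives up to similarity. This gives the count of 146.

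The main obstacle is the exactness of the critical-point computation. Critical points are algebraic numbers of possibly high degree, and deciding coplanarity at them numerically could produce false positives or negatives. I would handle this by working with exact algebraic arithmetic: minimal polynomials, resultants, and isolating intervals. Every coplanarity claimed numerically would be certified symbolically by substituting into the factored polynomials, and non-coplanarity would be certified by interval bounds. A secondary difficulty is deduplication, since the same polyhedron can appear from different groups and from conjugate representatives. For this I would compare canonical invariants such as sorted edge lengths and face angle sequences, each normalized by circumradius. As a sanity check, the final list should contain the 4 regular star polyhedra, the 5 Platonic solids, and all previously known examples: those of Hess, Brückner, Webb, Mikloweit, and Klein.
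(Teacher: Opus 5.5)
Your overall strategy matches the paper's: orbit types, coplanarity determinants as cubic polynomials in the parameters, factoring, resultants and isolating intervals, then faceting one representative per $\equiv_c$ class. However, the concrete list of classes you facet is incomplete in the two-parameter case. There a class is determined by which irreducible factors in $\operatorname{Copr}_T$ vanish at the location. So besides the typical class and the finitely many intersection points, each factor $s$ contributes a one-dimensional class: the points of its zero set with $a,b>0$ that lie on no other critical curve. Your representatives are only ``the critical point itself or a generic point of a stratum where no extra coplanarity occurs'', so these curve classes are never faceted. As a result you cannot exclude a one-parameter family of noble polyhedra along a critical curve, which would contradict any finite count. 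The paper handles each such class abstractly. There the coplanar quadruples are exactly the entries of $\operatorname{Conf}_T$ divisible by $s$, so the faceting needs no chosen point; one then checks whether $s$ has zeros with $a,b>0$. The search finds nothing in these classes, but it must be run.

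Your acceptance test also has two flaws.

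First, a single-cycle vertex figure does not rule out compounds whose components have disjoint vertex sets. The stella octangula is $G(F)$ for $G=*432$ acting on the cube's vertices, and the compound of five tetrahedra is $G(F)$ for $G=532$ acting on the dodecahedron's vertices. In both, every edge lies in exactly two faces and every vertex figure is a single triangle. Definition~\ref{def:abstractpoly} requires the whole poset to be connected, so you must also check that the faces form one component under edge-adjacency; otherwise your count exceeds 146.

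Second, if ``Hamiltonian cycles on its points'' means cycles through all orbit points of the plane, that is too restrictive. gD-19.1, gD-28.1, D-4 and D-5 each have two distinct faces in a common plane, so each of those faces omits some coplanar vertex. You need cycles on arbitrary subsets of a plane's points, for example the cycles of the adjacency graph of Definition~\ref{def:adjgraph}.

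Relatedly, the symmetry group of $G(F)$ always contains $G$, so ``not a proper subgroup'' is automatic. What needs handling is the case where it is strictly larger, for deduplication. Deduplicating by edge-length and angle invariants would itself need a certificate that distinct polyhedra are not conflated.
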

We treat the cases of polyhedra with prismatic symmetry groups (the symmetries of a regular polygonal prism or a subgroup thereof) and nonprismatic symmetry groups (all other cases) separately as we use different approaches to prove their completeness.

\section{Definitions}

In order to state our results we clearly must define precisely what counts as a noble polyhedron, which turns out to not be as obvious as one may expect. As noted by Grünbaum in \cite{yourpolymypoly}, Hess and Brückner were vague in many cases on what they counted as a polyhedron and sometimes they even contradicted their own requirements. We define polyhedra in terms of \textit{abstract polyhedra} whose vertices have been \textit{realized} into 3-dimensional Euclidean space $\mathbb{R}^3$. We define these terms below. 

\subsection{Abstract polyhedra and realizations}
We first provide a few preliminary definitions for partially ordered sets that we will use to define an abstract polyhedron.

We briefly summarize some common terminology regarding partial orders. In the following, let $(P, \leq)$ be a partially ordered set (\emph{poset}). A \textit{flag} of $P$ is a maximal chain of $P$. If all flags of $P$ have $n$ elements, then the \textit{rank} of $P$ is $n-2$. Given two elements $f,g$ of a partially ordered set $P$ with $f \le g$, the \textit{section} from $f$ to $g$ of $P$ is the partially ordered subset $\{h \in P \mid f \le h \le g \}$. A \textit{bounded} poset is a partial order with a minimum and maximum element. These two elements are the \textit{improper} elements, and all other elements are \textit{proper}. A bounded poset $P$ is \textit{connected} if for any two proper elements $f,g \in P$, there is a sequence of proper elements $f_0, f_1, ... , f_n$ such that $f_0 = f$, $f_n = g$, and any $f_i, f_{i+1}$ are comparable.

\begin{definition}
An \textit{abstract polyhedron} is a partially ordered set $P$ satisfying the following properties:
\begin{itemize}
\item $P$ is bounded and has rank 3.
\item Every section of $P$ that is of rank 1 contains exactly 4 elements. 
\item All other sections are connected.
\end{itemize}
\label{def:abstractpoly}
\end{definition}

We can assign a rank to any element of $P$ by taking the rank of the section from the minimum element to that element. Elements of rank 0 are known as \textit{vertices}, and the set of all vertices of $P$ is known as its \textit{vertex-set}. Elements of rank 1 are called \textit{edges}, and elements of rank 2 are \textit{faces}. More generally, the bounded poset determined by any section of rank 2 will be an \textit{abstract polygon}, with an $n$-sided abstract polygon (an $n$-gon) containing $n$ proper elements that cover the minimum element (i.e. $n$ vertices).

A \textit{vertex figure} of an abstract polyhedron is a section between a vertex and the maximum element. Being a section of rank 2, the vertex-figure will always be an abstract polygon. If every face of an abstract polyhedron $P$ is a $p$-gon and every vertex figure of $P$ is a $q$-gon, then we notate the \textit{Schl\"afli type} of $P$ as $\{p,q\}$. We now move from discussing the abstract structure of polyhedra to discussing the \textit{realizations} of abstract polyhedra, and we define a polyhedron below.

\begin{definition}
    A \textit{polyhedron} $\mathcal{P}=(P,\le,f)$ is an abstract polyhedron $P$ equipped with a map $f$ from its vertices to Euclidean 3-space $\mathbb{R}^3$. This map is also known as a \textit{realization}.
\label{def:poly}
\end{definition}

This is a very general definition, so for the context of this paper, we also put the following restrictions on our polyhedra:

\begin{definition}
A polyhedron $\mathcal{P}$ is \textit{nondegenerate} if it satisfies the following conditions:

\begin{itemize}
\item $\mathcal{P}$ is finite.
\item The realization is \textit{faithful} - that is, the realization is injective and no two edges or faces have the exact same set of vertices.
\item For any face in $\mathcal{P}$, all vertices within the face are mutually coplanar ($\mathcal{P}$ is a planar polyhedron). This plane that passes through every vertex within the face is known as its \textit{face plane}.
\item No two faces sharing an edge may be coplanar (faces sharing an edge cannot share a face plane).
\end{itemize}
\label{def:nondegenerate}
\end{definition}

We include this last requirement because when this does ever occur, it is always possible to combine the two coplanar faces into one larger face containing all the edges found in only one of the two coplanar faces. Unless otherwise stated, we assume a polyhedron to be nondegenerate for the remainder of this paper. 

It is important to note that this definition of polyhedra is not the only one that could lead to interesting results when finding noble examples. We touch on this in more detail in Section 5.3, but other possibilities could include allowing \textit{skew} (non-planar) faces in our polyhedra, in particular Gr\"unbaum's framework of \textit{skeletal polyhedra} \cite{oldandnew}, which has been shown to give interesting results when analyzing the more restrictive set of regular polyhedra (see e.g. \cite{schulteskeletal}), or enumerating the set of noble polyhedron compounds.

\subsection{Properties}

A \textit{symmetry} of a polyhedron $\mathcal{P}$ with realization $f$ is an automorphism $\phi$ of the underlying abstract polytope of $\mathcal{P}$ such that the induced permutation on the vertices in the realization can be extended to an isometry of $\mathbb{R}^3$. The \textit{symmetry group} $\Gamma(\mathcal{P})$ of $\mathcal{P}$ is then the group generated by the set of symmetries of $\mathcal{P}$ under composition - as all finite symmetry groups fix at least one point in $\mathbb{R}^3$, $\Gamma(\mathcal{P})$ is by definition a point group due to our polyhedra being finite objects.

We then call a polyhedron \emph{vertex transitive} if its symmetry group acts transitively on the vertex-set of the polyhedron; that is, for any two vertices of the polyhedron there is a symmetry that maps the first vertex to the second. Similarly, a polyhedron is \textit{face transitive} if its symmetry group acts transitively on its faces. All vertex-transitive polyhedra have a circumsphere due to the fact that all the isometries of their symmetry group (which is a point group) contain at least one shared fixed point by definition; all the vertices must be the same distance from this fixed point due to the fact they act transitively on the symmetry group, making this fixed point the circumcenter. Without loss of generality, we may assume that the circumcenter of a vertex-transitive polyhedron is located at the origin for simplicity, unless stated otherwise.

We now define a noble polyhedron. Note that all noble polyhedra have a Schl\"afli type and a circumcenter.
\begin{definition}
    A polyhedron is \textit{noble} if it is both vertex-transitive and face-transitive.
\label{def:noble}
\end{definition}

There are some consequences to these definitions. For one, polyhedra such as the V-faced polyhedra mentioned in \cite{hollowfaces} will not be included; because their faces revisit vertices, either their realizations will not be faithful or their underlying structure will not be an abstract polyhedron (depending on how they are interpreted). Similar situations also occur for any other polyhedra whose faces revisit vertices. The wreath polyhedra described in the same paper will not be counted due to the fourth restriction for polyhedra, as the pairs of coplanar triangular faces that they have would combine to form the stephanoid polyhedra.

A \textit{compound} is generally any collection of polyhedra. Compounds can similarly be noble, where the symmetry group acts transitively on any two vertices or faces in the compound (even if the vertices or faces are from different polyhedra). The \textit{affine hull} of a set of points $S$ is the unique smallest affine set containing $S$, denoted $\operatorname{aff} S$. If $S$ is nonempty, this will either be a point, line, plane, or all of $\mathbb{R}^3$. The affine hull $\operatorname{aff} F$ of a face $F$ of a polyhedron will always be its face plane.

\begin{definition}
    Two polyhedra are \textit{facetings} of each other if they share vertex-sets. Similarly, a polyhedron is a faceting of a set of points $V$ if $V$ is the vertex-set of the polyhedron.
\label{def:faceting}
\end{definition}
The process of determining the set of facetings of a polyhedron or set of points is also known as \textit{faceting}. From now on, we will only be considering noble facetings of polyhedra.

Given a polyhedron $\mathcal{P}$, the \textit{dual} of $\mathcal{P}$ is the polyhedron $\mathcal{Q}$ which is abstractly defined by the order $a \le_\mathcal{Q} b$ if and only if $b \le_\mathcal{P} a$, so each face of $\mathcal{P}$ corresponds to a vertex in $\mathcal{Q}$ and vice versa. The realization of this dual polytope is then defined as follows: For each face $F \in \mathcal{P}$, we project the circumcenter $o$ of $\mathcal{P}$ onto the face plane of $F$ and call this point $p$. Then the corresponding vertex of the dual $\mathcal{Q}$ is obtained by inversion about the unit sphere centered at $o$. The dual is an involution, and a polyhedron is \textit{self-dual} if its dual is similar to itself. Duals provide us with another way to relate noble polyhedra other than faceting, as the following theorem proves.

\begin{theorem}
    If the dual of a noble polyhedron is a polyhedron, then it is noble.
\end{theorem}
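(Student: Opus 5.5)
Let $\mathcal{P}$ be noble with circumcenter $o$ at the origin, and suppose its dual $\mathcal{Q}$ (abstract order reversed, realization as in the definition of the dual) is a polyhedron. The plan is to show that every isometry in the symmetry group $\Gamma(\mathcal{P})$ induces a symmetry of $\mathcal{Q}$, and that under this correspondence the vertex and face orbits of $\mathcal{P}$ become the face and vertex orbits of $\mathcal{Q}$. Then face transitivity of $\mathcal{P}$ gives vertex transitivity of $\mathcal{Q}$, and vice versa.

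\textbf{Step 1 (equivariance of the dual realization).} Let $\phi$ be a symmetry of $\mathcal{P}$. Its induced vertex permutation extends to an isometry $g$ of $\mathbb{R}^3$. Since $\Gamma(\mathcal{P})$ is a point group fixing $o$, we may take $g$ linear and orthogonal with $g(o)=o$. (All vertices lie on a sphere centred at $o$ by vertex transitivity, and they span the affine space of their hull, so any isometry extending the permutation that fixes the vertex-set also fixes the circumcenter.) For a face $F$, the vertices of $\phi(F)$ are $g$ applied to the vertices of $F$, so $\operatorname{aff}\phi(F)=g(\operatorname{aff}F)$. Because $g$ is an orthogonal map fixing $o$, it commutes with orthogonal projection of $o$: the foot $p_{\phi(F)}$ equals $g(p_F)$. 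It also commutes with inversion in the unit sphere centred at $o$, since $x\mapsto x/|x|^2$ is $O(3)$-equivariant. One small point is that $p_F\neq o$; otherwise the dual vertex is undefined, and this is excluded by the hypothesis that $\mathcal{Q}$ is a polyhedron. Hence the dual vertex $q_{\phi(F)}$ equals $g(q_F)$.

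\textbf{Step 2 (automorphism of the dual).} Any automorphism $\phi$ of the poset $P$ is also an automorphism of the reversed poset $Q$. The permutation $\phi$ induces on the vertices of $Q$, which are the faces of $P$, is therefore extended by the isometry $g$, by Step 1. So $\phi$ is a symmetry of $\mathcal{Q}$, and $\Gamma(\mathcal{P})\subseteq\Gamma(\mathcal{Q})$ as automorphism groups of the common underlying set.

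\textbf{Step 3 (transitivity).} The faces of $\mathcal{P}$ are exactly the vertices of $\mathcal{Q}$, and the vertices of $\mathcal{P}$ are exactly the faces of $\mathcal{Q}$. Transitivity of $\Gamma(\mathcal{P})$ on faces of $\mathcal{P}$ therefore gives transitivity of the larger group $\Gamma(\mathcal{Q})$ on vertices of $\mathcal{Q}$. Likewise, transitivity on vertices of $\mathcal{P}$ gives transitivity on faces of $\mathcal{Q}$. So $\mathcal{Q}$ is noble.

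\textbf{Main obstacle.} The argument itself is routine. The care needed is in Step 1: justifying that the extending isometry can be taken to fix $o$, and noting that the hypothesis ``$\mathcal{Q}$ is a polyhedron'' is what guarantees the dual realization is defined and that $\mathcal{Q}$ is nondegenerate. Nondegeneracy (faithfulness, planarity) is exactly what can fail for duals in general, which is why the statement assumes it rather than proving it.
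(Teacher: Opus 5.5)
Your proposal is correct and follows essentially the same route as the paper: a symmetry of $\mathcal{P}$ fixes the circumcenter $o$ and carries face planes to face planes, so it commutes with projecting $o$ and with inversion, maps dual vertices to dual vertices, and (being also an automorphism of the reversed poset) lies in $\Gamma(\mathcal{Q})$. The paper explicitly proves only that face-transitivity of $\mathcal{P}$ gives vertex-transitivity of $\mathcal{Q}$ and says the rest ``follows immediately''; your inclusion $\Gamma(\mathcal{P})\subseteq\Gamma(\mathcal{Q})$ spells out the face-transitivity of $\mathcal{Q}$ directly.
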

\begin{proof}
    We show that the dual of a face-transitive polyhedron is vertex-transitive, from which our result follows immediately.
    
    Let $\mathcal{P}$ be a face-transitive polyhedron, $f$ and $g$ be faces of $\mathcal{P}$, and $\mathcal{Q}$ be its dual. Then there is a symmetry $\phi$ that maps $f$ to $g$, which must also map the face plane of $f$ to the face plane of $g$. Because the projection of the circumcenter $o$ onto a face plane will be the unique point on the face plane of minimum distance between it and $o$, and that all symmetries of $\mathcal{P}$ have $o$ as a fixed point, it follows that $\phi$ will map the projection of $o$ onto the face plane of $f$ to the projection of $o$ onto the face plane of $g$. As the spherical inversion of these two points will be the vertices of $\mathcal{Q}$ corresponding to $f$ and $g$ by definition, it follows that $\phi$ (an isometry) will map one vertex of $\mathcal{Q}$ to the other, and in general $\phi$ will map vertices of $\mathcal{Q}$ to vertices of $\mathcal{Q}$. 
    
    $\phi$ is also an abstract symmetry of the poset $(P,\le_\mathcal{P})$. The corresponding symmetry of $(P,\le_\mathcal{Q})$ must map vertices to vertices in the same way that $\phi$ does as an isometry. Because $\phi$ is a symmetry of $\mathcal{Q}$ abstractly and maps vertices to vertices as an isometry, this implies that $\phi \in \Gamma(\mathcal{Q})$; it then follows that $\mathcal{Q}$ is vertex-transitive as for every two vertices of $\mathcal{Q}$ there is a symmetry of the polyhedron mapping one to the other.
\label{thm:nobledual}
\end{proof}
We now begin working towards an algorithm for enumerating the set of all noble polyhedra. Our main method will first give a way to find the noble facetings of a given finite set of points in $\mathbb{R}^3$. From there, we parametrize the possible vertex-sets for noble polyhedra as orbits of point groups. From there, we show how ideas from the initial faceting method may be generalized, which allows the ability to facet multiple (and in many cases, a continuum of) candidate orbits simultaneously when they are equivalent under a certain relation $\equiv_c$. This allows a reduction of the search space of noble polyhedra from infinitely many possible sets of points to only a finite amount based on these equivalence classes. Finally, we use a computer algorithm to check all possible equivalence classes under $\equiv_c$ to enumerate the noble polyhedra.

\section{Methodology}

\subsection{Faceting}

Given a point group $G$ of isometries and a (planar) polygon $F$ with a vertex-set $V_F \subset V$ where $V$ is some orbit of $G$, we may generate an isometric realization $g(F)$ of $F$ with vertices $g(V_F) = \{g(v) \mid v \in V_F\}$ for any $g \in G$. We can see that $g(V_F) \subset V$ since $G$ acts transitively on $V$ ($V$ is an orbit of $G$). If we consider the set of all faces of the form $g(F)$, it is possible that a polyhedral structure will be induced. If the set of faces is also the set of faces of a polyhedron $\mathcal{P}$, then we can also determine the set of edges and the vertex-set of $\mathcal{P}$ (the vertex-set is $V$); this gives us all the necessary information to determine the entire structure of $\mathcal{P}$, including the realization.

\begin{definition}
Given a point group $G$ and a polygon $F$ whose vertices are a subset of some orbit $V$ of $G$, we define \textit{$G(F)$} to be the set of faces of the form $g(F)$ for all $g \in G$. If $G(F)$ induces a corresponding polyhedron, we may refer to the polyhedral form synonymously with the set of faces when applicable. \label{def:groupgen}
\end{definition}
\begin{lemma}
    Given a point group $G$ and a polygon $F$, if $G(F)$ is a polyhedron then it is a noble polyhedron.
\label{thm:generatepoly}
\end{lemma}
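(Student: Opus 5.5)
The plan is to show directly that each element $g \in G$ induces a symmetry of the polyhedron $\mathcal{P}$ determined by $G(F)$. Transitivity on faces and on vertices then follows from how $G(F)$ and $V$ are built. Throughout, I use that the vertex-set of $\mathcal{P}$ is the orbit $V$, and that the faces of $\mathcal{P}$ are exactly the polygons $h(F)$ for $h \in G$.

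\textbf{Each $g \in G$ gives a symmetry.} Fix $g \in G$. Since $g$ is an isometry, it sends each realized face $h(F)$ to $gh(F)$, and $gh(F)$ again lies in $G(F)$. So $g$ permutes the faces of $\mathcal{P}$. It also permutes $V$, because $V$ is an orbit. Edges are the pairs of consecutive vertices of the faces, so $g$ carries edges of $h(F)$ to edges of $gh(F)$, and hence permutes the edges as well. Every incidence between a vertex, an edge and a face is recorded inside some face $h(F)$, and $g$ carries it to the corresponding incidence inside $gh(F)$. So $g$ preserves the partial order.

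The map on the abstract polyhedron is well defined because the realization is faithful (Definition \ref{def:nondegenerate}). Each vertex is determined by its point in $V$, and each edge or face is determined by its vertex-set, so $g$ acting on points determines a unique map of the poset. The improper elements are sent to themselves. Thus $g$ is an automorphism of the poset whose induced permutation of the vertices extends to the isometry $g$. By definition, $g$ then lies in $\Gamma(\mathcal{P})$.

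\textbf{Transitivity.} \emph{Face transitivity:} any two faces have the form $h_1(F)$ and $h_2(F)$, and $h_2h_1^{-1}$ maps the first to the second. \emph{Vertex transitivity:} $G$ acts transitively on its orbit $V$, which is the vertex-set of $\mathcal{P}$. So $\mathcal{P}$ is noble by Definition \ref{def:noble}.

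\textbf{Main obstacle.} The one delicate point is the bookkeeping needed to know that $g$ is a genuine poset automorphism rather than just a permutation of point sets. This relies on faithfulness, so that faces and edges are identified with their vertex-sets. It also relies on the edges of $\mathcal{P}$ being determined by the faces $h(F)$, which is how the polyhedron induced by $G(F)$ was described just before Definition \ref{def:groupgen}. If that identification is granted, everything else is immediate.
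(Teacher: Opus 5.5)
Your proposal is correct and follows the paper's argument: $G$ sits inside $\Gamma(G(F))$, vertex transitivity comes from the vertex-set being the $G$-orbit $V$, and face transitivity comes from every face having the form $g(F)$. You also spell out why each $g \in G$ is a genuine poset automorphism, using faithfulness and the fact that edges are determined by the faces; the paper's proof only asserts the inclusion $G \le \Gamma(\mathcal{P})$.
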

\begin{proof}
    Keeping in mind that $G$ must be a subgroup of the symmetry group of $G(F)$, we find that $G(F)$ must be vertex-transitive; its vertex-set $V$ is an orbit of $G$ and so $G$ acts transitively on $V$. Similarly, $G(F)$ is face-transitive because any face of $G(F)$ can be described as $g(F)$ for some $g \in G$, and therefore $G$ acts transitively on the set of faces as well.
\end{proof}

\begin{lemma}
   A noble polyhedron $\mathcal{P}$ is of the form $G(F)$ for some point group $G$ and some polygon $F$. \vspace{-13pt}
\label{thm:polygenerateable}
\end{lemma}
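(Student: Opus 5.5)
The natural choice is to take $G$ to be (the isometry image of) the symmetry group $\Gamma(\mathcal{P})$ and $F$ to be an arbitrary face of $\mathcal{P}$. By the discussion in Section 2.2, $\Gamma(\mathcal{P})$ is a point group. Each element acts as an abstract automorphism $\phi$ of the underlying poset, together with an isometry of $\mathbb{R}^3$ extending the induced vertex permutation. So the first step is to fix, for every symmetry, such an isometry. This gives a group $G$ of isometries acting on the vertex-set $V=f(\text{vertices})$.

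One subtlety needs checking: the extension to an isometry should be unique, or at least well-defined as an action on $V$. Faithfulness makes the induced permutation of $V$ a genuine permutation of points. If $V$ affinely spans $\mathbb{R}^3$, the isometry is uniquely determined by its action on $V$. Nondegeneracy rules out all vertices being coplanar, since otherwise adjacent faces would share a face plane. Hence $G$ is a well-defined finite group of isometries fixing the circumcenter, and $V$ is a single orbit of $G$ by vertex transitivity.

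Next, fix a face $F$ with vertex-set $V_F$, viewed as a planar polygon. Its edges are determined by the abstract structure, and its vertices are realized by $f$. For $g\in G$ coming from an automorphism $\phi$, we must show that $g(F)$, the polygon with vertices $g(V_F)$ and edges $g$ applied to the edges of $F$, is exactly the realized face $\phi(F)$ of $\mathcal{P}$. This holds because $g$ agrees with $\phi$ on vertices, and faithfulness means faces and edges are determined by their vertex-sets. Thus every $g(F)$ is a face of $\mathcal{P}$.

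Conversely, face transitivity gives, for every face $F'$, a symmetry $\phi$ with $\phi(F)=F'$, so $F'=g(F)$. Hence $G(F)$ is precisely the face set of $\mathcal{P}$. Since the edges and the poset structure are recovered from the faces and their vertex-sets, the polyhedron induced by $G(F)$ is $\mathcal{P}$ itself.

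The main obstacle I expect is the bookkeeping that identifies "the polyhedron induced by $G(F)$" with $\mathcal{P}$, rather than the group theory. Concretely, one must argue that the face set together with the edge data of each polygon recovers the whole poset. Faithfulness is what makes this work: the order relations can be read off from vertex-set inclusions.
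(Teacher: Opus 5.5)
Your proposal is correct and follows the paper's proof: take $G=\Gamma(\mathcal{P})$ and $F$ an arbitrary face, observe that vertex transitivity makes the vertex-set an orbit of $G$, and use face transitivity to conclude that $G(F)$ is exactly the face set of $\mathcal{P}$. You also verify two points the paper leaves implicit: the isometric extension is unique, because nondegeneracy forces the vertices to affinely span $\mathbb{R}^3$, and the poset can be recovered from the faces; the one imprecision is that edge--face incidences must be read from each polygon's edge data rather than from vertex-set inclusion, since a face's vertex-set may contain both endpoints of an edge that is not one of its sides.
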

\begin{proof}
    Let $F$ be any face of $\mathcal{P}$ and let $G = \Gamma(\mathcal{P})$ be the symmetry group of $\mathcal{P}$. The vertex-set $V$ of $\mathcal{P}$ is acted transitively on by $G$ ($\mathcal{P}$ is vertex-transitive), so $V$ is an orbit of $G$. It follows that $G(F)$ has the same set of faces as $\mathcal{P}$ because $\mathcal{P}$ is face-transitive; it follows that they are equivalent.
\end{proof} 

This provides us with a way to find the set of all noble facetings of a given set of points $V$; by determining all point groups $G$ which act transitively on $V$, and all planar polygons $F$ whose vertices are within $V$, the noble facetings of $V$ will precisely be the cases where $G(F)$ is a polyhedron. We now move on to determining the possible planar polygons within a vertex-set $V$, as from this we may determine the set of noble facetings of a given vertex-set.
\begin{definition}
    Given a vertex-set $V$, we define the set $P(V)$ of \textit{planes of vertices} within $V$ to be the set of all subsets $p = \operatorname{aff}(s) \cap V$, where $s$ is a subset of $V$ containing 3 vertices.
\label{def:planes}
\end{definition}

\begin{theorem}
    If $F$ is a face with vertex-set $V_F$ of a noble polyhedron $\mathcal{P}$ with vertex-set $V$, then there is a unique element $p \in P(V)$ such that $V_F \subseteq p$.
\label{thm:allfacesinplane}
\end{theorem}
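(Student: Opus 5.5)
The plan is to take the face plane of $F$ itself as the required element of $P(V)$, namely $p = \operatorname{aff}(V_F) \cap V$, and then check three things: $p$ really belongs to $P(V)$, it contains $V_F$, and no other element of $P(V)$ contains $V_F$.

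First, I will show that $F$ has at least three vertices and that they are not collinear. Since $F$ is a face of an abstract polyhedron, the section from the minimum to $F$ is an abstract polygon. By the diamond condition, each edge of $F$ has exactly two vertices, and by connectedness $F$ has at least two edges. Faithfulness of the realization forbids two edges with the same vertex set, so $F$ has at least three vertices. These vertices lie on the circumsphere of $\mathcal{P}$, which exists by vertex-transitivity. A line meets a sphere in at most two points, so three distinct vertices of $F$ cannot be collinear. By the planarity condition in Definition \ref{def:nondegenerate}, all of $V_F$ lies in a single face plane $\Pi$.

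Next, I will choose any three vertices $s \subseteq V_F$. They are non-collinear, so $\operatorname{aff}(s)$ is a plane, and since $s \subseteq \Pi$ it equals $\Pi$. Hence $p = \operatorname{aff}(s) \cap V = \Pi \cap V$ lies in $P(V)$ by Definition \ref{def:planes}. It contains $V_F$ because $V_F \subseteq \Pi$ and $V_F \subseteq V$. This settles existence.

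For uniqueness, suppose $q = \operatorname{aff}(s') \cap V \in P(V)$ also contains $V_F$. If $\operatorname{aff}(s')$ were only a line or a point, then $q$ would be a subset of $V$ lying on a line. Since $V$ lies on a sphere, $q$ would have at most two points and could not contain the at least three points of $V_F$. So $\operatorname{aff}(s')$ is a plane. It contains the three non-collinear points $s$, hence it equals $\Pi$, and therefore $q = \Pi \cap V = p$.

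The only point requiring care is this degenerate case, where the three points of $s'$ are collinear and $\operatorname{aff}(s')$ is a line rather than a plane. It is handled by the circumsphere argument used in both steps. The rest is direct unwinding of the definitions.
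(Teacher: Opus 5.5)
Your argument is correct and is essentially the paper's proof: you take $p=\operatorname{aff}(V_F)\cap V$ and use vertex-transitivity (via the circumsphere) to rule out collinear vertices, which gives both membership in $P(V)$ and uniqueness, and you fill in details the paper leaves implicit, such as why $|V_F|\ge 3$ and the collinear-$s'$ case. One small correction: that $F$ has at least two edges follows from the diamond condition on the section from a vertex of $F$ to $F$ (each vertex lies on exactly two edges of $F$), not from connectedness, which by itself would not exclude a single edge.
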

\begin{proof}
    As $F$ is a planar polygon, the vertices of $F$ are mutually coplanar and $V_F$ contains at least 3 points. Consider the face plane $\operatorname{aff}(V_F)$ of $F$ that passes through all the vertices of $F$ - clearly $p = \operatorname{aff}(V_F) \cap V \in P(V)$ as $|V_F| \ge 3$. This element is unique as at most one plane may pass through three or more points which are not colinear (as $\mathcal{P}$ is vertex transitive, this is impossible).
\end{proof}

This then allows us to find all of the possible polygons within a vertex-set $V$; these will correspond precisely with the cycles of vertices that are found inside a plane of vertices within $V$, and so the problem of finding noble polyhedra reduces to one of finding planes of vertices within vertex-sets. Additionally, we may narrow down the possible polygons within a given plane of vertices $p$ that generate a polyhedron under a group $G$ by finding the \textit{adjacency graph} of $p$ under $G$:

\begin{definition}
    Given a plane $p$ of vertices within a vertex-set $V$ and a symmetry group $G$ which acts transitively on $V$, the \textit{adjacency graph} of $p$ under $G$ is the graph whose vertices are those of $p$ and whose edges are the pairs $\{v_1,v_2\}$ where $v_1, v_2 \in p$ and there is a $g \in G$ such that $p \cap g(p) = \{v_1, v_2\}$.
\label{def:adjgraph}
\end{definition}

It turns out that a face of a noble polyhedron always has a corresponding cycle within the adjacency graph, as the following lemma proves. The converse turns out to be false - in many cases, the generated object is a noble polyhedron compound rather than a noble polyhedron (there are also more exotic counterexamples - see Table \ref{table:fissarypolyhedra}).

\begin{lemma}
    Given a polygon $F$ and a point group $G$, if $G(F)$ is a noble polyhedron and $p$ is the face plane of $F$, then neighboring vertices of $F$ are connected by an edge in the adjacency graph of $p$ under $G$.
\label{thm:polyfromcycle}
\end{lemma}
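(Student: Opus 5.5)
Let $v_1, v_2$ be neighboring vertices of $F$, so $\{v_1,v_2\}$ is the vertex pair of an edge $e$ of $F$. Here $p$ denotes the plane of vertices $\operatorname{aff}(V_F)\cap V$, which is the element of $P(V)$ given by Theorem \ref{thm:allfacesinplane}. We must find $g \in G$ with $p \cap g(p) = \{v_1,v_2\}$. The plan is to use the second property of an abstract polyhedron: the section from $e$ to the maximum element has exactly 4 elements. Hence $e$ lies in exactly two faces, $F$ and some other face $F'$. By face transitivity of $G(F)$ we have $F' = g(F)$ for some $g\in G$. This $g$ is the natural candidate.

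First, show $\{v_1,v_2\} \subseteq p \cap g(p)$. Both $v_1$ and $v_2$ are vertices of $F'=g(F)$. The vertex set of $g(F)$ is $g(V_F)$, which lies in $g(\operatorname{aff}(V_F)) \cap V$. Since $g$ is an isometry preserving $V$, this set equals $g(p)$. Also $v_1,v_2 \in V_F \subseteq p$.

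Second, show the reverse inclusion, which is the main obstacle. By the fourth nondegeneracy condition, $F$ and $F'$ share an edge, so their face planes are distinct. Thus $\operatorname{aff}(V_F)$ and $g(\operatorname{aff}(V_F))$ are distinct planes, both containing the distinct points $v_1,v_2$. Their intersection is therefore exactly the line $\ell$ through $v_1$ and $v_2$. Consequently $p\cap g(p) = \ell \cap V$.

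All points of $V$ lie on the circumsphere centered at the origin, because the polyhedron is vertex-transitive. A line meets a sphere in at most two points, so $\ell\cap V = \{v_1,v_2\}$. (This is the same non-collinearity fact used in the proof of Theorem \ref{thm:allfacesinplane}.) Hence $p \cap g(p)=\{v_1,v_2\}$, and $\{v_1,v_2\}$ is an edge of the adjacency graph by Definition \ref{def:adjgraph}.

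The only delicate point is that the face planes of $F$ and $g(F)$ really differ. This requires $g(F)\neq F$ as faces, and $F'\ne F$ holds because the rank-1 section condition gives two distinct faces containing $e$. The nondegeneracy condition then converts distinct faces sharing an edge into distinct planes.
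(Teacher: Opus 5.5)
Your proof is correct and follows essentially the same route as the paper. You take the second face $g(F)$ through the edge, use the nondegeneracy condition to get distinct face planes, and conclude that the two planes of vertices meet only in $\{v_1,v_2\}$; along the way you make explicit two points the paper leaves implicit, namely that the plane of vertices of $g(F)$ is $g(p)$, and that a third common point is ruled out because a line meets the circumsphere in at most two points.
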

\begin{proof}
    Consider any edge $\{v_1, v_2\}$ of $F$. Because $G(F)$ forms a face-transitive polyhedron and every edge has exactly two faces incident to it, there will be another face $g(F)$ for some $g \in G$ which also contains the edge $\{v_1, v_2\}$. By Theorem \ref{thm:allfacesinplane}, there exist two planes of vertices $p,q$ that contain the vertices of $F$ and $g(F)$ respectively; these are distinct as otherwise $F$ and $g(F)$ would be adjacent and coplanar, and this contradicts Definition \ref{def:nondegenerate}. Then $p \cap q = \{v_1, v_2\}$ due to the fact that otherwise $p \cap q$ would contain at least 3 points in common, so we would have $p = q$. Therefore, there is an edge between $v_1$ and $v_2$ in the adjacency graph of $p$.
\end{proof}

It follows from the contrapositive of this lemma that if the adjacency graph of a plane of vertices contains no cycles of edges, then the plane of vertices cannot contain a face of a noble polyhedron. Note that this method does not necessarily rely on any numeric calculations - if we may determine the set of planes $P(V)$ and we know what the symmetry groups that act transitively on $V$ are, we can determine the facetings abstractly without considering the realization. This fact is important for our future computer-assisted proof.

For a given set of points $V$ and all groups $G_0,G_1,...,G_n$ which act transitively on $V$, we may determine the set of noble facetings of $V$ as follows: For each $p \in P(V)$, find all cycles $F$ in the adjacency graph of $p$. Checking if any $G_i(F)$ is polyhedral for $0 \le i \le n$ then enumerates all possible test cases.

\subsection{Vertices}

These strategies which we will use to assist with the enumeration of noble facetings so far only works for an individual set of vertices that we want to find the noble facetings of. We will work towards generalizing these processes to check an infinitude of different orbits of vertices at once, allowing us to narrow our currently infinite search space of possible vertices for noble polyhedra down to a finite number of test cases.

Informally, the way in which we will do this is to first group together the possible vertex-sets for noble polyhedra into categories with similar sets of planes (in the sense of Definition \ref{def:planes}) and symmetries. In particular, if two sets of vertices act transitively on different symmetry groups then they will be put in separate categories.

As the polyhedra we are considering are finite, their symmetry groups must be point groups. To notate these point groups, it will be convenient to utilize \textit{orbifold notation} \cite{orbifoldnotation} which gives us some information about the boundary of its fundamental domain. In orbifold notation, we notate a reflection on the boundary of a fundamental domain by a star *. If there are multiple reflections on the boundary of the domain we indicate an angle of $\frac{\pi}{n}$ between two reflection planes by a number $n$ that is written after the star. A number $m$ not found after a star indicates an order-$m$ axis of rotational symmetry on the boundary of the fundamental domain. One of the prismatic symmetries is also notated with a cross $\times$ - this indicates an improper rotation symmetry in the spherical case. We call this string of characters and numbers the \textit{orbifold symbol} of the point group.

\begin{figure}[H]
    \centering
    \tdplotsetmaincoords{70}{-26}

\scalebox{1.5}{
\begin{tikzpicture}[tdplot_main_coords]

  \draw[green, line cap=round, very thick] (0,0,0) -- (1.5,1.5,1.5);
  \draw[green, line cap=round, very thick] (0,0,0) -- (-1.5,1.5,1.5);
  \draw[green, line cap=round, very thick] (0,0,0) -- (1.5,1.5,-1.5);
  \draw[green, line cap=round, very thick] (0,0,0) -- (-1.5,1.5,-1.5);
  \draw[green, line cap=round, very thick] (0,0,0) -- (1.5,-1.5,-1.5);

  \draw[blue, line cap=round, very thick] (0,0,0) -- (2.5,0,0);
  \draw[blue, line cap=round, very thick] (0,0,0) -- (0,2.5,0);
  \draw[blue, line cap=round, very thick] (0,0,-0) -- (0,0,-2.5);

  \begin{scope}[canvas is zy plane at x=0]
     \draw [red,very thick,smooth,domain=20:200] plot ({2.022*cos(\x)}, {2.022*sin(\x)});
   \end{scope}

   \begin{scope}[canvas is zx plane at y=0]
     \draw [red,very thick,smooth,domain=50:230] plot ({2.022*cos(\x)}, {2.022*sin(\x)});
   \end{scope}

   \begin{scope}[canvas is xy plane at z=0]
     \draw [red,very thick,smooth,domain=-20:160] plot ({2.022*cos(\x)}, {2.022*sin(\x)});
   \end{scope}

  \shade[ball color = lightgray!40, opacity = 0.5] (0,0) circle (2cm);

  \draw[blue, line cap=round, very thick] (2,0,0) -- (0,0,0);
  \draw[blue, line cap=round, very thick] (0,2,0) -- (0,0,0);
  \draw[blue, line cap=round, very thick] (0,0,-2) -- (0,0,0);
  \draw[green, line cap=round, very thick] (-1.154701,-1.154701,-1.154701) -- (1.154701,1.154701,1.154701);
  \draw[green, line cap=round, very thick] (1.154701,-1.154701,-1.154701) -- (-1.154701,1.154701,1.154701);
  \draw[green, line cap=round, very thick] (-1.154701,1.154701,-1.154701) -- (1.154701,-1.154701,1.154701);
  \draw[green, line cap=round, very thick] (1.154701,1.154701,-1.154701) -- (-1.154701,-1.154701,1.154701);
  \draw[blue, line cap=round, very thick] (-2,0,0) -- (0,0,0);
  \draw[blue, line cap=round, very thick] (0,-2,0) -- (0,0,0);
  \draw[blue, line cap=round, very thick] (0,0,2) -- (0,0,0);
  
  \shade[ball color = lightgray!40, opacity = 0.8] (0,0) circle (2cm);
  \fill[fill=black] (0,0) circle (1pt);

  \shade[ball color = darkgray, opacity = 0.75] (0,-2,0) to [bend right=8.5]  (0,-1.414214,1.414214) to [bend right=8.5]
   (-1.154701,-1.154701,1.154701) to [bend right=8.5]
   (-1.414214,-1.414214,0) to [bend right=8.5] (0,-2,0);

  \begin{scope}[canvas is zy plane at x=0]
     \draw [red,very thick,smooth,domain=200:380] plot ({2.022*cos(\x)}, {2.022*sin(\x)});
   \end{scope}

   \begin{scope}[canvas is zx plane at y=0]
     \draw [red,very thick,smooth,domain=230:410] plot ({2.022*cos(\x)}, {2.022*sin(\x)});
   \end{scope}

   \begin{scope}[canvas is xy plane at z=0]
     \draw [red,very thick,smooth,domain=160:340] plot ({2.022*cos(\x)}, {2.023*sin(\x)});
   \end{scope}

   \draw[blue, line cap=round,very thick] (0,0,2.044) -- (0,0,2.5);
   \draw[blue, line cap=round,very thick] (0,-2.044,0) -- (0,-2.5,0);
   \draw[blue, line cap=round,very thick] (-2.044,0,0) -- (-2.5,0,0);
   
   \draw[green, line cap=round,very thick] (-1.154701,-1.154701,1.154701) -- (-1.5,-1.5,1.5);
   \draw[green, line cap=round,very thick] (1.154701,-1.154701,1.154701) -- (1.5,-1.5,1.5);
   \draw[green, line cap=round,very thick] (-1.154701,-1.154701,-1.154701) -- (-1.5,-1.5,-1.5);
   
\end{tikzpicture}
}
    \caption{Depiction of the symmetry group 3*2. Red circles indicate planes of reflection, axes of order-2 rotational symmetry are indicated in blue, and axes of order-3 rotational symmetry are indicated in green. A possible fundamental domain demonstrating the symbol 3*2 is also given.}
    \label{fig:pyritohedral}
\end{figure}

We split the point groups into two categories: 7 infinite sets of \textit{prismatic} symmetries, which have the same symmetries as the prism of a regular polygon or a subgroup thereof, and the remaining 7 \textit{nonprismatic} point groups. Namely, the 7 infinite sets of prismatic point groups are $*22n$, $2{*}n$, $22n$, $*nn$, $n*$, $n\times$, and $nn$ for any positive natural number $n$ and the 7 nonprismatic point groups are $332$, $*332$, $3{*}2$, $432$, $*432$, $532$, and $*532$. This completes the enumeration of the point groups (see Table III of \cite{orbifoldnotation}). Note that all of these groups are a normal subgroup of a finite three-dimensional \textit{reflection group} - a group that can be generated solely by reflections in Euclidean space.

\begin{lemma}
    All point groups in three dimensions are normal subgroups of a reflection group - namely $*332$, $*432$, $*532$, or $*22n$ for some $n$.
\label{thm:pointnormalsubgroups}
\end{lemma}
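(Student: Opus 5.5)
The plan is a case check through the complete list of point groups in Table III of \cite{orbifoldnotation}, recalled just before the statement. For each group $H$ I will exhibit a reflection group $R \in \{*332, *432, *532, *22N\}$ with $H \le R$, and then show normality in one of two ways. The first is that $[R:H]=2$, since index-two subgroups are always normal. Otherwise, I will show directly that $H$ is a union of conjugacy classes of $R$ by describing how conjugation in $R$ acts on the rotation angles and mirror planes. The reflection groups themselves are trivially normal in themselves.

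\textbf{Nonprismatic groups.} Each of these is a rotation group or a reflection group of a Platonic solid.
\begin{itemize}
\item $332$ has index $2$ in $*332$, since it is the rotation subgroup.
\item $432$ has index $2$ in $*432$, and $532$ has index $2$ in $*532$, for the same reason.
\item $*332$ has index $2$ in $*432$: the tetrahedral reflection group sits inside the octahedral one as the stabilizer of an inscribed tetrahedron in the cube.
\item $3{*}2$ has index $2$ in $*432$. This group is $332 \times \{\pm I\}$, of order $24$, and it lies in $*432$ because $-I$ belongs to the full octahedral group.
\end{itemize}
In every case normality follows from index two.

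\textbf{Prismatic groups.} Fix the principal axis $\ell$. Every element of $*22N$ preserves $\ell$ setwise. The conjugation action of $*22N$ behaves as follows:
\begin{itemize}
\item a rotation or rotoreflection about $\ell$ by angle $\theta$ is sent to one by angle $\pm\theta$;
\item the vertical mirrors and the horizontal $2$-fold axes, both at angles that are multiples of $\pi/N$, are permuted among themselves via $\alpha \mapsto 2\beta - \alpha$ with $\beta \in \frac{\pi}{N}\mathbb{Z}$.
\end{itemize}
Using these facts I will treat the seven families in turn.
\begin{itemize}
\item $*22n$ is itself a reflection group.
\item $22n$, $*nn$ and $n*$ each have index $2$ in $*22n$. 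They are, respectively, the rotation subgroup, the subgroup fixing the top hemisphere, and the subgroup of elements fixing $\ell$ pointwise or reflecting it in a way compatible with the horizontal mirror.
\item $2{*}n$ has order $4n$ and is the index-$2$ subgroup of $*22(2n)$ generated by alternate vertical mirrors together with the rotoreflection of angle $\pi/n$.
\item $nn$ consists of the rotations about $\ell$ by multiples of $2\pi/n$. This set is closed under $\theta \mapsto \pm\theta$, so $nn$ is normal in $*22n$.
\item $n\times$ consists of the rotations by multiples of $2\pi/n$ together with the rotoreflections by odd multiples of $\pi/n$. It sits inside $*22(2n)$ and is again closed under $\theta \mapsto \pm\theta$, so it is a union of conjugacy classes there.
\end{itemize}

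I expect the main obstacle to be bookkeeping rather than anything conceptual. One must pin down the orbifold conventions so that the identifications above are correct, in particular that $2{*}n$ and $n\times$ embed in $*22(2n)$ and not in $*22n$. One must also check the small cases $n=1,2$, where several of the groups coincide with one another and the "principal axis" is not unique. For those cases I would verify the claim directly from the explicit group elements rather than through the general angle argument.
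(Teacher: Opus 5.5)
The paper gives no proof of this lemma beyond pointing to the classification in Table III of Conway and Huson, and your case check over that table is exactly the verification it leaves implicit, and it is correct: index two everywhere except $nn$ in $*22n$ and $n\times$ in $*22(2n)$, which you rightly handle by closure of the rotation and rotoreflection angles under $\theta\mapsto\pm\theta$. The one slip is your description of $n*$: the elements of $*22n$ fixing $\ell$ pointwise include the vertical mirrors, which are not in $n*$; instead, $n*$ consists of the rotations about $\ell$ by multiples of $2\pi/n$ together with their composites with the horizontal mirror (the kernel of $\det$ times the sign of the action on $\ell$), which still has order $2n$, so your index-two conclusion stands.
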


\begin{definition}
    A \textit{Coxeter matrix} is a symmetric $n\times n$ matrix $M$ whose entries are within $\{1,2,...\}\cup\{\infty\}$ and which satisfies the property that $m_{ii} = 1$ and otherwise $m_{ij} = m_{ji} \ge 2$ for all $i,j \le n$.
\label{def:coxetermatrix}
\end{definition}

\begin{definition}
    A \textit{Coxeter group} is a group defined by the presentation $\langle r_1,r_2,...,r_n \mid (r_ir_j)^{m_{ij}}=1\rangle$, where the $m_{ij}$ are the entries to a $n \times n$ Coxeter matrix $M$.
\label{def:coxetergroup}
\end{definition}

Notably, every reflection group is a representation of a Coxeter group, where each generator $r_i$ corresponds to a reflection in Euclidean space. For the three-dimensional reflection groups, these correspond to Coxeter groups with three generating reflections. We will use the following generating reflections, which can be verified to generate the correct reflection groups:

\begin{definition}
The three reflections for the $*332$ group will be given by the following matrix transformations:

\[
r_1 =
\begin{bmatrix}
  0 & 1 & 0 \\
  1 & 0 & 0 \\
  0 & 0 & 1
\end{bmatrix} \qquad\quad
r_2 =
\begin{bmatrix}
 0 & -1 & 0 \\
-1 & 0 & 0 \\
 0 & 0 & 1
\end{bmatrix} \qquad\quad
r_3 =
\begin{bmatrix}
1 & 0 & 0 \\
0 & 0 & 1 \\
0 & 1 & 0
\end{bmatrix}
\]
\label{def:332matrices}
\end{definition}

\begin{definition}
The three reflections for the $*432$ group will be given by the following matrix transformations:

\[
r_1 =
\begin{bmatrix}
-1 & 0 & 0 \\
 0 & 1 & 0 \\
 0 & 0 & 1
\end{bmatrix} \qquad\quad
r_2 =
\begin{bmatrix}
0 & 1 & 0 \\
1 & 0 & 0 \\
0 & 0 & 1
\end{bmatrix} \qquad\quad
r_3 =
\begin{bmatrix}
1 & 0 & 0 \\
0 & 0 & 1 \\
0 & 1 & 0
\end{bmatrix}
\label{def:432matrices}
\]
\end{definition}

\begin{definition}
The three reflections for the $*532$ group will be given by the following matrix transformations, where $\phi = \frac{\sqrt{5}+1}{2}$ is the golden ratio:
\[
r_1 =
\begin{bmatrix}
-1 & 0 & 0 \\
0 & 1 & 0 \\
0 & 0 & 1
\end{bmatrix} \qquad\quad
r_2 =
\begin{bmatrix}
1 & 0 & 0 \\
0 & -1 & 0 \\
0 & 0 & 1
\end{bmatrix} \qquad\quad
r_3 =
\frac{1}{2}
\begin{bmatrix}
1-\phi & -\phi & 1 \\
-\phi & 1 & \phi-1 \\
1 & \phi-1 & \phi
\end{bmatrix}
\]
\label{def:532matrices}
\end{definition}

\begin{definition}
The three reflections for the $*22n$ family of groups will be given by the following matrix transformations, where $\theta=\frac{2\pi}{n}$:
\[
r_1 =
\begin{bmatrix}
1 & 0 & 0 \\
0 & 1 & 0 \\
0 & 0 & -1
\end{bmatrix} \qquad\quad
r_2 =
\begin{bmatrix}
1 &  0 & 0 \\
0 & -1 & 0 \\
0 &  0 & 1
\end{bmatrix} \qquad\quad
r_3 =
\begin{bmatrix}
\cos(\theta) & \sin(\theta) & 0 \\
\sin(\theta) & -\cos(\theta) & 0 \\
0 & 0 & 1
\end{bmatrix}
\]
\label{def:22Nmatrices}
\end{definition}

We now categorize the orbits of the three-dimensional reflection groups up to similarity, as the vertices of every vertex-transitive (and by extension, noble) polyhedron are an orbit of their symmetry group. We start with the reflection groups, and then extend the notion to all point groups. Consider a reflection group $G$ which is a Coxeter group generated by the reflections $r_1,r_2,r_3$. Then for any three nonnegative real numbers $a,b,c$, we may place a unique point $p$ in the fundamental domain of $G$ that is distance $a$ from the $r_1$ mirror, distance $b$ away from the $r_2$ mirror, and distance $c$ away from the $r_3$ mirror. We call $p$ the \textit{generating point}, and from this we may determine the orbit of $p$; we notate this orbit $V_G(a,b,c)$ and call the values $a,b,c$ \textit{parameters}.

To extend this notion to the subgroups of the reflection groups, we know from Lemma \ref{thm:pointnormalsubgroups} that each point group $N$ is a normal subgroup of a reflection group $G$. Then $V_G(a,b,c)$ determines the orbits of $N$ up to similarity by finding the orbit of $N$ generated by the same generating point $p$ as was done for $V_G(a,b,c)$. The fact that $N$ is a normal subgroup guarantees the uniqueness of this subgroup orbit up to similarity, which we notate $V_N(a,b,c)$. We may also note that allowing at least one of these three parameters to be 0 will cause points in the orbit to combine together and/or for the resulting set of points to also act transitively on a symmetry group with a higher order of symmetry, which in both cases will have effects on faceting that we will need to consider separately.

As any generating point $p$ within the fundamental domain of a point group $G$ may be described in this way, this construction must encapsulate all possible orbits of $G$; however, there are still duplicates up to similarity as $V_G(a,b,c)$ will be similar to $V_G(ax,bx,cx)$ for all positive real $x$. To disregard these duplicates, we may assume without loss of generality that the last nonzero parameter of $V_G$ is 1. In combination with the fact that orbits with 0 as a parameter will be considered separately, this partitions each collection $V_N(a,b,c)$ of orbits up to similarity into 7 subsets that require separate consideration: $V_N(a,b,1), V_N(0,a,1), V_N(a,0,1), V_N(a,1,0), V_N(0,0,1),$ $V_N(0,1,0),$ and $V_N(1,0,0)$, where $a,b > 0$ are real numbers (we disregard $V_N(0,0,0)$ as it consists of only 1 point and does not need to be considered). We define the number of \textit{degrees of freedom} of a subset to be equal to the number of parameters in its definition (i.e. $a$ and possibly $b$).

Grouping identical subsets of orbits together (as some sets of points are transitive under multiple symmetry groups), for the nonprismatic symmetries we end up with 23 subsets that we will call \textit{orbit types}, and we define these below. We additionally give each orbit type a symbol as we will need to consisely refer to them later, especially when describing the results of the final enumeration.

\begin{table}[H]
  \caption{The 23 orbit types for nonprismatic symmetry groups.}
  \centering

    \caption{Example orbits from each of the 23 nonprismatic orbit types. The edges of the convex hull of each orbit have also been drawn for visual clarity.}
    \label{fig:nonprismaticorbittypes}
\end{figure}

We may do the same process for the prismatic symmetries, although here there are some cases we do not include due to the fact that all points within these orbits are coplanar (these trivially cannot contain noble facetings, so we disregard them). Note that the families of groups $nn$ and $*nn$ contain no orbits that satisfy this condition.

\begin{table}[H]
    \caption{The 5 classes of orbit types for prismatic symmetry groups.}
  \centering
  \begin{tabular}{ | l | l | l | l | l | }
    \hline
    Symbol & Transitive under & Vertices & Possible definition as $V_N$ & Degrees of freedom \\ \hline
    P$n(a)$  $(n \ge 3)$ & $n*, 22n, *22n$       & $2n$ & $V_{*22n}(a,1,0)$ & 1 \\
    A$n(a)$  $(n \ge 2)$ & $n\times, 2{*}n, 22n$ & $2n$ & $V_{22n}(a,1,0)$ & 1 \\
    sA$n(a,b)$ $(n \ge 2)$ & $22n$                 & $2n$ & $V_{22n}(a,b,1)$ & 2 \\
    tP$n(a,b)$ $(n \ge 2)$ & $*22n$                & $4n$ & $V_{*22n}(a,b,1)$ & 2 \\
    tA$n(a,b)$ $(n \ge 2)$ & $2{*}n$               & $4n$ & $V_{2{*}n}(a,b,1)$ & 2 \\ \hline
  \end{tabular}
\label{table:prismatictypes}
\end{table}

\begin{figure}[H]
    \centering

\tdplotsetmaincoords{70}{-37}
\begin{tabular}{K K K K K}
\begin{tikzpicture}[tdplot_main_coords]
    \ld{0,0.862104,0.506732}{0,0.862104,-0.506732}
    \ld{0.819909,0.266405,0.506732}{0.819909,0.266405,-0.506732}
    \ls{-0.819909,0.266405,0.506732}{-0.819909,0.266405,-0.506732}
    \ls{0.506732,-0.697457,0.506732}{0.506732,-0.697457,-0.506732}
    \ls{-0.506732,-0.697457,0.506732}{-0.506732,-0.697457,-0.506732}
    \ls{0,0.862104,0.506732}{0.819909,0.266405,0.506732}
    \ld{0,0.862104,-0.506732}{0.819909,0.266405,-0.506732}
    \ls{0,0.862104,0.506732}{-0.819909,0.266405,0.506732}
    \ld{0,0.862104,-0.506732}{-0.819909,0.266405,-0.506732}
    \ls{0.819909,0.266405,0.506732}{0.506732,-0.697457,0.506732}
    \ld{0.819909,0.266405,-0.506732}{0.506732,-0.697457,-0.506732}
    \ls{-0.819909,0.266405,0.506732}{-0.506732,-0.697457,0.506732}
    \ls{-0.819909,0.266405,-0.506732}{-0.506732,-0.697457,-0.506732}
    \ls{0.506732,-0.697457,0.506732}{-0.506732,-0.697457,0.506732}
    \ls{0.506732,-0.697457,-0.506732}{-0.506732,-0.697457,-0.506732}
    
    \pt{0,0.862104,0.506732}
    \pd{0,0.862104,-0.506732}
    \pt{0.819909,0.266405,0.506732}
    \pd{0.819909,0.266405,-0.506732}
    \pt{-0.819909,0.266405,0.506732}
    \pt{-0.819909,0.266405,-0.506732}
    \pt{0.506732,-0.697457,0.506732}
    \pt{0.506732,-0.697457,-0.506732}
    \pt{-0.506732,-0.697457,0.506732}
    \pt{-0.506732,-0.697457,-0.506732}
\end{tikzpicture} P5 &

\begin{tikzpicture}[tdplot_main_coords]
    \ld{0.8199093, 0.2664047, -0.506731}{0.8199093, -0.266404, 0.5067318}
    \ls{-0.0, -0.862103, 0.5067318}{-0.506731, -0.697456, -0.506731}
    \ld{-0.819909, 0.2664047, -0.506731}{-0.0, 0.8621037, -0.506731}
    \ls{-0.819909, -0.266404, 0.5067318}{-0.0, -0.862103, 0.5067318}
    \ls{-0.819909, 0.2664047, -0.506731}{-0.506731, -0.697456, -0.506731}
    \ls{0.8199093, -0.266404, 0.5067318}{0.5067318, -0.697456, -0.506731}
    \ls{0.8199093, -0.266404, 0.5067318}{-0.0, -0.862103, 0.5067318}
    \ld{0.8199093, 0.2664047, -0.506731}{-0.0, 0.8621037, -0.506731}
    \ls{0.5067318, -0.697456, -0.506731}{-0.506731, -0.697456, -0.506731}
    \ls{-0.819909, -0.266404, 0.5067318}{-0.819909, 0.2664047, -0.506731}
    \ls{-0.819909, -0.266404, 0.5067318}{-0.506731, 0.6974565, 0.5067318}
    \ls{0.5067318, 0.6974565, 0.5067318}{0.8199093, -0.266404, 0.5067318}
    \ls{0.5067318, 0.6974565, 0.5067318}{-0.506731, 0.6974565, 0.5067318}
    \ld{-0.0, 0.8621037, -0.506731}{-0.506731, 0.6974565, 0.5067318}
    \ld{0.8199093, 0.2664047, -0.506731}{0.5067318, -0.697456, -0.506731}
    \ls{0.5067318, -0.697456, -0.506731}{-0.0, -0.862103, 0.5067318}
    \ls{-0.819909, -0.266404, 0.5067318}{-0.506731, -0.697456, -0.506731}
    \ld{0.5067318, 0.6974565, 0.5067318}{0.8199093, 0.2664047, -0.506731}
    \ls{-0.819909, 0.2664047, -0.506731}{-0.506731, 0.6974565, 0.5067318}
    \ld{0.5067318, 0.6974565, 0.5067318}{-0.0, 0.8621037, -0.506731}

    \pt{-0.819909, -0.266404, 0.5067318}
    \pt{-0.819909, 0.2664047, -0.506731}
    \pt{0.5067318, 0.6974565, 0.5067318}
    \pd{0.8199093, 0.2664047, -0.506731}
    \pt{0.8199093, -0.266404, 0.5067318}
    \pt{0.5067318, -0.697456, -0.506731}
    \pd{-0.0, 0.8621037, -0.506731}
    \pt{-0.506731, 0.6974565, 0.5067318}
    \pt{-0.0, -0.862103, 0.5067318}
    \pt{-0.506731, -0.697456, -0.506731}
\end{tikzpicture} A5 &

\begin{tikzpicture}[tdplot_main_coords]
    \ls{0.8540377, 0.1176533, 0.5067318}{0.1520171, 0.8485950, 0.5067318}
    \ls{0.1520171, 0.8485950, 0.5067318}{-0.760085, 0.4068072, 0.5067318}
    \ls{-0.760085, 0.4068072, 0.5067318}{-0.621776, -0.597174, 0.5067318}
    \ls{-0.621776, -0.597174, 0.5067318}{0.3758071, -0.775881, 0.5067318}
    \ls{0.3758071, -0.775881, 0.5067318}{0.8540377, 0.1176533, 0.5067318}
    \ld{0.7600859, 0.4068072, -0.506731}{-0.152017, 0.8485950, -0.506731}
    \ld{-0.152017, 0.8485950, -0.506731}{-0.854037, 0.1176533, -0.506731}
    \ls{-0.854037, 0.1176533, -0.506731}{-0.375807, -0.775881, -0.506731}
    \ls{-0.375807, -0.775881, -0.506731}{0.6217761, -0.597174, -0.506731}
    \ld{0.6217761, -0.597174, -0.506731}{0.7600859, 0.4068072, -0.506731}

    \ld{0.8540377, 0.1176533, 0.5067318}{0.7600859, 0.4068072, -0.506731}
    \ld{0.7600859, 0.4068072, -0.506731}{0.1520171, 0.8485950, 0.5067318}
    \ld{0.1520171, 0.8485950, 0.5067318}{-0.152017, 0.8485950, -0.506731}
    \ld{-0.152017, 0.8485950, -0.506731}{-0.760085, 0.4068072, 0.5067318}
    \ls{-0.760085, 0.4068072, 0.5067318}{-0.854037, 0.1176533, -0.506731}
    \ls{-0.854037, 0.1176533, -0.506731}{-0.621776, -0.597174, 0.5067318}
    \ls{-0.621776, -0.597174, 0.5067318}{-0.375807, -0.775881, -0.506731}
    \ls{-0.375807, -0.775881, -0.506731}{0.3758071, -0.775881, 0.5067318}
    \ls{0.3758071, -0.775881, 0.5067318}{0.6217761, -0.597174, -0.506731}
    \ld{0.6217761, -0.597174, -0.506731}{0.8540377, 0.1176533, 0.5067318}
    
    \pt{0.8540377, 0.1176533, 0.5067318}
    \pd{0.7600859, 0.4068072, -0.506731}
    \pt{-0.375807, -0.775881, -0.506731}
    \pt{-0.621776, -0.597174, 0.5067318}
    \pd{-0.152017, 0.8485950, -0.506731}
    \pt{0.1520171, 0.8485950, 0.5067318}
    \pt{-0.760085, 0.4068072, 0.5067318}
    \pt{-0.854037, 0.1176533, -0.506731}
    \pt{0.6217761, -0.597174, -0.506731}
    \pt{0.3758071, -0.775881, 0.5067318}
\end{tikzpicture} sA5 &

\begin{tikzpicture}[tdplot_main_coords]
    \ls{-0.375807, -0.775881, -0.506731}{-0.621776, -0.597174, -0.506731}
    \ls{-0.760085, 0.4068072, -0.506731}{-0.760085, 0.4068072, 0.5067318}
    \ls{-0.375807, -0.775881, 0.5067318}{-0.621776, -0.597174, 0.5067318}
    \ld{0.8540377, 0.1176533, -0.506731}{0.7600859, 0.4068072, -0.506731}
    \ld{-0.152017, 0.8485950, -0.506731}{-0.152017, 0.8485950, 0.5067318}
    \ls{-0.375807, -0.775881, -0.506731}{0.3758071, -0.775881, -0.506731}
    \ls{-0.375807, -0.775881, 0.5067318}{0.3758071, -0.775881, 0.5067318}
    \ld{-0.152017, 0.8485950, -0.506731}{-0.760085, 0.4068072, -0.506731}
    \ls{-0.152017, 0.8485950, 0.5067318}{0.1520171, 0.8485950, 0.5067318}
    \ls{0.8540377, 0.1176533, 0.5067318}{0.7600859, 0.4068072, 0.5067318}
    \ls{-0.621776, -0.597174, -0.506731}{-0.854037, 0.1176533, -0.506731}
    \ls{0.3758071, -0.775881, -0.506731}{0.3758071, -0.775881, 0.5067318}
    \ls{-0.375807, -0.775881, -0.506731}{-0.375807, -0.775881, 0.5067318}
    \ls{-0.854037, 0.1176533, -0.506731}{-0.854037, 0.1176533, 0.5067318}
    \ld{0.8540377, 0.1176533, -0.506731}{0.8540377, 0.1176533, 0.5067318}
    \ld{0.1520171, 0.8485950, -0.506731}{0.1520171, 0.8485950, 0.5067318}
    \ls{-0.152017, 0.8485950, 0.5067318}{-0.760085, 0.4068072, 0.5067318}
    \ld{0.8540377, 0.1176533, -0.506731}{0.6217761, -0.597174, -0.506731}
    \ld{0.7600859, 0.4068072, -0.506731}{0.1520171, 0.8485950, -0.506731}
    \ls{0.8540377, 0.1176533, 0.5067318}{0.6217761, -0.597174, 0.5067318}
    \ls{-0.621776, -0.597174, -0.506731}{-0.621776, -0.597174, 0.5067318}
    \ls{0.6217761, -0.597174, -0.506731}{0.3758071, -0.775881, -0.506731}
    \ls{-0.621776, -0.597174, 0.5067318}{-0.854037, 0.1176533, 0.5067318}
    \ls{-0.760085, 0.4068072, -0.506731}{-0.854037, 0.1176533, -0.506731}
    \ls{0.7600859, 0.4068072, 0.5067318}{0.1520171, 0.8485950, 0.5067318}
    \ld{-0.152017, 0.8485950, -0.506731}{0.1520171, 0.8485950, -0.506731}
    \ls{0.6217761, -0.597174, 0.5067318}{0.3758071, -0.775881, 0.5067318}
    \ld{0.7600859, 0.4068072, -0.506731}{0.7600859, 0.4068072, 0.5067318}
    \ls{-0.760085, 0.4068072, 0.5067318}{-0.854037, 0.1176533, 0.5067318}
    \ls{0.6217761, -0.597174, -0.506731}{0.6217761, -0.597174, 0.5067318}
    
    \pd{0.8540377, 0.1176533, -0.506731}
    \pt{0.8540377, 0.1176533, 0.5067318}
    \pd{0.7600859, 0.4068072, -0.506731}
    \pt{0.7600859, 0.4068072, 0.5067318}
    \pt{-0.375807, -0.775881, -0.506731}
    \pt{-0.375807, -0.775881, 0.5067318}
    \pt{-0.621776, -0.597174, -0.506731}
    \pt{-0.621776, -0.597174, 0.5067318}
    \pd{-0.152017, 0.8485950, -0.506731}
    \pt{-0.152017, 0.8485950, 0.5067318}
    \pd{0.1520171, 0.8485950, -0.506731}
    \pt{0.1520171, 0.8485950, 0.5067318}
    \pt{-0.760085, 0.4068072, -0.506731}
    \pt{-0.760085, 0.4068072, 0.5067318}
    \pt{-0.854037, 0.1176533, -0.506731}
    \pt{-0.854037, 0.1176533, 0.5067318}
    \pt{0.6217761, -0.597174, -0.506731}
    \pt{0.6217761, -0.597174, 0.5067318}
    \pt{0.3758071, -0.775881, -0.506731}
    \pt{0.3758071, -0.775881, 0.5067318}
\end{tikzpicture} tP5 &

\begin{tikzpicture}[tdplot_main_coords]
    \ls{0.1406598, -0.838940, -0.5}{0.7544137, -0.393022, -0.5}
    \ls{-0.100721, 0.8446667, 0.5}{0.1007218, 0.8446667, 0.5}
    \ls{0.5779683, -0.624146, 0.5}{0.4149969, -0.742552, 0.5}
    \ls{0.5779683, -0.624146, 0.5}{0.8344505, 0.1652241, 0.5}
    \ls{-0.834450, 0.1652241, 0.5}{-0.772201, 0.3568085, 0.5}
    \ld{0.6069132, 0.5960395, -0.5}{0.8413463, -0.125471, -0.5}
    \ls{-0.834450, 0.1652241, 0.5}{-0.577968, -0.624146, 0.5}
    \ls{-0.606913, 0.5960395, -0.5}{-0.841346, -0.125471, -0.5}
    \ls{0.1007218, 0.8446667, 0.5}{0.7722010, 0.3568085, 0.5}
    \ls{0.1406598, -0.838940, -0.5}{-0.140659, -0.838940, -0.5}
    \ld{0.3793207, 0.7613951, -0.5}{-0.379320, 0.7613951, -0.5}
    \ld{0.3793207, 0.7613951, -0.5}{0.6069132, 0.5960395, -0.5}
    \ls{-0.841346, -0.125471, -0.5}{-0.754413, -0.393022, -0.5}
    \ls{0.4149969, -0.742552, 0.5}{-0.414996, -0.742552, 0.5}
    \ls{-0.772201, 0.3568085, 0.5}{-0.100721, 0.8446667, 0.5}
    \ld{-0.379320, 0.7613951, -0.5}{-0.606913, 0.5960395, -0.5}
    \ld{0.8413463, -0.125471, -0.5}{0.7544137, -0.393022, -0.5}
    \ls{0.8344505, 0.1652241, 0.5}{0.7722010, 0.3568085, 0.5}
    \ls{-0.140659, -0.838940, -0.5}{-0.754413, -0.393022, -0.5}
    \ls{-0.414996, -0.742552, 0.5}{-0.577968, -0.624146, 0.5}

    \ls{-0.834450, 0.1652241, 0.5}{-0.841346, -0.125471, -0.5}
    \ls{0.4149969, -0.742552, 0.5}{0.1406598, -0.838940, -0.5}
    \ls{-0.772201, 0.3568085, 0.5}{-0.606913, 0.5960395, -0.5}
    \ld{-0.100721, 0.8446667, 0.5}{-0.379320, 0.7613951, -0.5}
    \ld{0.1007218, 0.8446667, 0.5}{0.3793207, 0.7613951, -0.5}
    \ls{0.5779683, -0.624146, 0.5}{0.7544137, -0.393022, -0.5}
    \ls{-0.414996, -0.742552, 0.5}{-0.140659, -0.838940, -0.5}
    \ls{-0.577968, -0.624146, 0.5}{-0.754413, -0.393022, -0.5}
    \ld{0.8344505, 0.1652241, 0.5}{0.8413463, -0.125471, -0.5}
    \ld{0.7722010, 0.3568085, 0.5}{0.6069132, 0.5960395, -0.5}
    
    \pt{-0.834450, 0.1652241, 0.5}
    \pt{-0.772201, 0.3568085, 0.5}
    \pt{-0.100721, 0.8446667, 0.5}
    \pt{0.1007218, 0.8446667, 0.5}
    \pt{0.5779683, -0.624146, 0.5}
    \pt{0.4149969, -0.742552, 0.5}
    \pt{-0.414996, -0.742552, 0.5}
    \pt{-0.577968, -0.624146, 0.5}
    \pt{0.8344505, 0.1652241, 0.5}
    \pt{0.7722010, 0.3568085, 0.5}
    \pd{0.3793207, 0.7613951, -0.5}
    \pd{-0.379320, 0.7613951, -0.5}
    \pt{-0.606913, 0.5960395, -0.5}
    \pt{-0.841346, -0.125471, -0.5}
    \pd{0.6069132, 0.5960395, -0.5}
    \pd{0.8413463, -0.125471, -0.5}
    \pt{0.1406598, -0.838940, -0.5}
    \pt{0.7544137, -0.393022, -0.5}
    \pt{-0.140659, -0.838940, -0.5}
    \pt{-0.754413, -0.393022, -0.5}
\end{tikzpicture} tA5 \\

\end{tabular}
    
    \caption{Example orbits from each of the 5 classes of prismatic orbit types (with $n = 5$.) The edges of the convex hull of each orbit have also been drawn for visual clarity.}
    \label{fig:prismaticorbittypes}
\end{figure}

We may represent a member of an orbit type $T$ based on specifying values for its parameters (some of $a$ or $b$). We notate this as $T, T(a),$ or $T(a,b)$ depending on the number of degrees of freedom in $T$, and call the corresponding tuple of positive real numbers the \textit{location} of the orbit in $T$. One important consequence of the definition of an orbit type is that it induces a natural \textit{canonical mapping} between any two orbits within a given orbit type. We define this below.

\begin{definition}
    Let $V,U \in T$ be two members of the same orbit type, and let $p_V$ and $p_U$ be the generating points of $U$ and $V$ respectively. If $G$ is a point group that acts transitively on all members of $T$, then the \textit{canonical mapping} $f:V\mapsto U$ is the unique mapping such that for all $g \in G$, $f(g(p_V))=g(p_U)$.
\label{def:canonicalmapping}
\end{definition}

If $T$ has at least one degree of freedom and $v$ is a point in an orbit of $T$, we may then consider the function $v^T(a)$ or $v^T(a,b)$ which gives the point in $T(a)$ or $T(a,b)$ equivalent to $v$ under the canonical mapping. In fact, the coordinates of $v^T$ are affine in terms of their parameters - for a reflection group $G$ generated by mirrors $r_1,r_2,r_3$, let $u_i$ be the vector which is distance 1 away from $r_i$, within the fundamental domain of $G$, and on the intersection of the other two mirrors. Then $au_1+bu_2+cu_3$ is distance $a$ from $r_1$, distance $b$ from $r_2$, and distance $c$ from $r_3$. Substituting this into the fourth columns of Table \ref{table:nonprismatictypes} and Table \ref{table:prismatictypes} then gives the base point of the orbit, and with the above choices for the representations of the reflection groups, we may determine that the coordinates of each $v^T$ will be linear equations.

\subsection{Faceting the continuum}

We now move on to determining the possible noble facetings of members of a given orbit type, rather than a single orbit. For orbit types with zero degrees of freedom, this is trivial; as there is only a single member of the orbit type that could contain facetings and so we can simply use the method described in Section 3.1. Note that these orbits must be tested for facetings under all symmetry groups that act transitively on the orbit type, as it is possible for noble polyhedra to have a symmetry group which is a subgroup of the symmetry group of their vertices alone.

Usually, if $\mathcal{P}$ is a noble polyhedron with vertex-set $V$, and its orbit type $T$ has at least one degree of freedom, there are other orbits $U \in T$ where the canonical mapping $f:V\mapsto U$ does not generate a corresponding noble polyhedron with $U$ as its vertex-set. In these cases, the realization $f(\mathcal{P})$ violates some condition of Definition \ref{def:nondegenerate} that $\mathcal{P}$ did not; in particular, either the faces of $f(\mathcal{P})$ are no longer planar, or adjacent faces of $f(\mathcal{P})$ become coplanar with each other. Thus the only orbits $V \in T$ where these "exceptional" noble polyhedra like $\mathcal{P}$ can exist are when there exists a set of points which are coplanar in $V$ but not always coplanar in other orbits within the orbit type. We formalize this idea below, and we call such an orbit \textit{critical}.

For each orbit type $T$, we assign some enumeration $i_T:V \mapsto |V|$ for each $V \in T$ that is a bijection and is preserved by the canonical mapping (i.e. if $f:V\mapsto U$ is the canonical mapping and $v \in V$, then $i_T(v) = i_T(f(v))$). We will also use the following fact: given any four points $v_0,v_1,v_2,v_3 \in V$, the following determinant evaluates to zero if and only if the four points are coplanar:

\[
\begin{vmatrix}
    v_{0x} & v_{0y} & v_{0z} & 1 \\
    v_{1x} & v_{1y} & v_{1z} & 1 \\
    v_{2x} & v_{2y} & v_{2z} & 1 \\
    v_{3x} & v_{3y} & v_{3z} & 1
\end{vmatrix}
\]

This determinant is equal to six times the (signed) volume of the tetrahedron formed by the points $v_1,v_2,v_3,v_4$ (see e.g. \cite{tetvolume}), so the determinant evaluates to zero exactly when the points are coplanar.

\begin{definition}
    Given an orbit $V$ of points with $k$ vertices that is a member of some orbit type $T$, we define the \textit{volume configuration} $\operatorname{Conf}_V: k^4 \mapsto \mathbb{R}$ of $V$ to be the unique function such that for any $v_0,v_1,v_2,v_3 \in V$, the tuple $\operatorname{Conf}_V(i_T(v_0),i_T(v_1),i_T(v_2),i_T(v_3))$ equals the determinant given above. Similarly, we define $\operatorname{Conf}_T$ to be the function mapping a set of parameters to the volume configuration of the orbit at that location. We refer to elements in the range of these functions as \textit{entries} of a volume configuration.
\label{def:volumeconfiguration}
\end{definition}

For a given tuple $(i_0,i_1,i_2,i_3)$, $\operatorname{Conf}_V(i_0,i_1,i_2,i_3)$ turns out to be polynomial in terms of its parameters ($a$ and possibly $b$) as $V$ varies across some orbit type $T$. This is due to the fact that the coordinates of the points in the orbits of $T$ are linear equations in terms of $a$ and $b$ (see Section 3.2), and the determinant of a matrix with polynomial entries will also be a polynomial. Furthermore, since the last column of the matrix is all ones, the polynomial will be of degree at most 3. It is thus natural to interpret $\operatorname{Conf}_T$ as a function mapping tuples $(i_0,i_1,i_2,i_3)$ to polynomials whose variables are the parameters of $T$.

As such, if an orbit $V$ contains four coplanar points that are not coplanar for every member of its orbit type, then the corresponding entry of $\operatorname{Conf}_T$ will be nonzero and the location of $V$ will be a root of that entry. The same also applies in the opposite direction - every (positive) root of a nonzero entry corresponds to an orbit with additional coplanar vertices. This gives a way to formally describe the notion of criticality:

\begin{definition}
    An orbit $V$ within an orbit type $T$ is \textit{critical} if its location is a root of a nonzero entry of $\operatorname{Conf}_T$. Otherwise, we say it is \textit{typical}.
\label{def:critical}
\end{definition}

In fact, this naturally gives a transitive ordering on $T$ which in some sense compares the criticality of two orbits (which we describe below).

\begin{definition}
    Given two orbits $V,U$ in the same orbit type, we say that \textit{$V$ is critical relative to $U$} (written $V \ge_c U$) if an entry of $\operatorname{Conf}_U$ evaluating to 0 implies that the corresponding entry of $\operatorname{Conf}_V$ under the canonical mapping is also equal to 0. Similarly, we say \textit{$V$ is critical-equivalent to $U$} (written $V \equiv_c U$) if $V \ge_c U$ and $U \ge_c V$.
\label{def:criticalrelation}
\end{definition}

\begin{theorem}
    The typical orbits of an orbit type $T$ are exactly the $V$ such that $V \le_c U$ for all $U \in T$.
\label{thm:typicalminimum}
\end{theorem}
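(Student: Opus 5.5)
We unpack Definition \ref{def:criticalrelation} in terms of zero sets of entries and argue in two directions. For each tuple $t=(i_0,i_1,i_2,i_3)$, let $\operatorname{Conf}_T(t)$ be the corresponding polynomial in the parameters of $T$. The entries fall into two classes. The \emph{identically zero} entries are those whose polynomial is the zero polynomial; these vanish at every location. The \emph{nonzero} entries are those whose polynomial is not identically zero. The relation $V \le_c U$ says exactly that every $t$ with $\operatorname{Conf}_V(t)=0$ also has $\operatorname{Conf}_U(t)=0$. Set
\[
Z(V)=\{t \mid \operatorname{Conf}_V(t)=0\}.
\]
Then $V\le_c U$ is equivalent to $Z(V)\subseteq Z(U)$. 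Let $Z_0$ denote the set of identically zero entries. By construction $Z_0\subseteq Z(U)$ for every $U\in T$. By Definition \ref{def:critical}, $V$ is typical if and only if $Z(V)=Z_0$, since $V$ is typical exactly when no nonzero entry vanishes at its location.

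For the forward direction, suppose $V$ is typical. Then $Z(V)=Z_0\subseteq Z(U)$ for every $U\in T$, so $V\le_c U$ for all $U$.

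For the converse, suppose $V\le_c U$ for all $U\in T$, and assume for contradiction that $V$ is critical. Then some nonzero entry $t$ has $\operatorname{Conf}_V(t)=0$. It suffices to produce a single $U\in T$ with $\operatorname{Conf}_U(t)\neq 0$, since this gives $t\in Z(V)\setminus Z(U)$ and contradicts $V\le_c U$.

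This existence step is the only nontrivial point. The set of locations of $T$ is an open subset of $\mathbb{R}$ or $\mathbb{R}^2$: it is $a>0$, or $a,b>0$. A nonzero polynomial in one or two variables cannot vanish on a nonempty open set. In one variable it has at most three roots, since Section 3.3 shows the degree is at most $3$. In two variables, its zero set is a proper algebraic curve with empty interior. So some location in the open positive orthant avoids the zero set of $\operatorname{Conf}_T(t)$, and the corresponding orbit $U$ works.

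Two points need care here. First, the parametrization must genuinely cover an open set of parameters, so every positive $a$ (and $b$) must give a member of $T$. This is part of the definition of the orbit types: their locations are tuples of positive reals. Second, for orbit types with zero degrees of freedom, every entry is constant, so the nonzero entries have no roots. Hence every orbit is typical and the statement holds trivially.

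As a byproduct, all typical orbits are $\equiv_c$ to one another, since they share $Z=Z_0$. This is consistent with the statement that the typical orbits are exactly the minimum elements.
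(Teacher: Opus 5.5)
Your proof is correct and follows the paper's argument. The forward direction is the same unpacking of definitions. The converse likewise produces a witness orbit $U$ at which the vanishing nonzero entry is nonzero, contradicting $V \le_c U$.

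The only difference is how that witness is chosen. The paper simply lets $U$ be a typical orbit and tacitly assumes one exists. You instead pick $U$ off the zero set of that single entry, and you justify its existence by noting that a nonzero polynomial cannot vanish on the open domain $a>0$ (or $a,b>0$). Your version therefore makes explicit an existence step the paper leaves unproved.
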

\begin{proof}
    Let $V=T(a_V,b_V)$ be a typical orbit, and $U=T(a_U,b_U)$ be some orbit of $T$. By definition, $V$ is not a root of any nonzero entry $c$ of $\operatorname{Conf}_T$, so $c(a_V,b_V) = 0  \iff c = 0 \implies c(a_U,b_U)=0$ and so $V \le_c U$. If $V \le_c U$ for all $U \in T$, then we may let $U$ be a typical orbit and $V$ cannot be the root of any nonzero entry of $\operatorname{Conf}_T$.
\end{proof}

Stated another way, this result says that the typical orbits of an orbit type $T$ are exactly those in the minimum equivalence class of $T$ under $\ge_c$.

\begin{theorem}
    An orbit type $T$ with one degree of freedom may only have a finite number of critical orbits.
\label{thm:1dfinitecritical}
\end{theorem}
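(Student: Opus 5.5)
The plan is to show that for a one-parameter orbit type, the set of critical locations is a finite union of finite sets. By Definition \ref{def:critical}, an orbit $T(a)$ with $a>0$ is critical exactly when $a$ is a root of some entry $\operatorname{Conf}_T(i_0,i_1,i_2,i_3)$ that is not identically zero. So the critical locations form the set
\[
\bigcup_{\substack{(i_0,i_1,i_2,i_3)\\ \operatorname{Conf}_T(i_0,i_1,i_2,i_3)\neq 0}} \{a>0 : \operatorname{Conf}_T(i_0,i_1,i_2,i_3)(a)=0\},
\]
and it suffices to show that the index set is finite and that each set in the union is finite.

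For the index set, I will use the fact that every orbit in $T$ has the same number $k$ of points. This holds because the canonical mapping (Definition \ref{def:canonicalmapping}) is a bijection between any two members of $T$, and the enumeration $i_T$ takes values in $\{1,\dots,k\}$. Hence there are at most $k^4$ tuples $(i_0,i_1,i_2,i_3)$, so $\operatorname{Conf}_T$ has only finitely many entries.

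Each individual entry contributes finitely many roots by the following argument.
\begin{itemize}
\item As noted after Definition \ref{def:volumeconfiguration}, the coordinates of $v^T(a)$ are affine in $a$, as shown in Section 3.2.
\item Consequently, each entry of $\operatorname{Conf}_T$ is a polynomial in the single variable $a$, namely the determinant of a $4\times4$ matrix whose last column is constant. Its degree is at most $3$.
\item A nonzero univariate polynomial of degree at most $3$ has at most $3$ real roots.
\end{itemize}
Therefore there are at most $3k^4$ critical locations, so $T$ has finitely many critical orbits.

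The only step needing care is making sure that distinct locations really give distinct orbits. Then finitely many critical locations means finitely many critical orbits. This holds because the parametrization fixes the last nonzero parameter to $1$, which removes the scaling ambiguity. So each location $a>0$ labels exactly one orbit of $T$, and counting locations counts orbits.

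I expect no real obstacle here. The substantive input, that the entries are polynomials in $a$, was already established in the text preceding the statement. The one point to state explicitly is that the polynomial is taken as a formal polynomial in $a$: "nonzero entry" means not identically zero, and it is this condition that bounds the number of its roots.
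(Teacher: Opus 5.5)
Your proof is correct and follows the paper's argument: $\operatorname{Conf}_T$ has finitely many entries, each nonzero entry is a univariate polynomial in $a$ (of degree at most $3$) with finitely many roots, and each critical orbit comes from one of these roots. One small remark is that your closing concern points the wrong way. To go from finitely many critical locations to finitely many critical orbits, you only need every orbit of $T$ to be $T(a)$ for some location $a$, so that the critical orbits are the image of a finite set; you do not need distinct locations to give distinct orbits.
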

\begin{proof}
    By definition, $T$ will be parameterized by a single variable $a$, and therefore the entries of $\operatorname{Conf}_T$ will be univariate polynomials. As every polynomial has a finite number of positive real roots and $\operatorname{Conf}_T$ has a finite number of entries, in total there are a finite number of roots across all of the volume configuration's nonzero polynomial entries. By definition, each root corresponds exactly with a critical orbit, so there are a finite number of critical orbits in the orbit type.
\end{proof}

\begin{theorem}
    Given two orbits $V$ and $U$ of the same orbit type and their canonical mapping $f:V \mapsto U$, if $V \equiv_c U$ then any faceting $\mathcal{P}$ of $V$ which is noble under a symmetry of the orbit type has a corresponding noble faceting $f(\mathcal{P})$ of $U$ provided by the canonical mapping between points.\label{thm:equivalencepreservesfaces}
\end{theorem}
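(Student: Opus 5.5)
We transport $\mathcal{P}$ to $U$ by keeping its abstract poset and replacing the realization $\rho$ of $\mathcal{P}$ by $f\circ\rho$. Let $G$ be a point group of the orbit type under which $\mathcal{P}$ is noble. By Lemma \ref{thm:polygenerateable}, we may write $\mathcal{P}=G(F)$ for some planar polygon $F$ with vertices in $V$. Since $f$ is defined by $f(g(p_V))=g(p_U)$, it is $G$-equivariant: $f(g(v))=g(f(v))$ for all $g\in G$ and $v\in V$. Hence $f(\mathcal{P})$ has faces $f(g(F))=g(f(F))$, so as a set of faces it equals $G(f(F))$. By Lemma \ref{thm:generatepoly}, it therefore suffices to show that $f(\mathcal{P})$ is a nondegenerate polyhedron in the sense of Definition \ref{def:nondegenerate}. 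Vertex- and face-transitivity then follow automatically, because $G$ acts by isometries and permutes the transported faces exactly as it did before.

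The checks are as follows, in order.
\begin{itemize}
\item \emph{Finiteness and the abstract structure.} Both are unchanged, since the poset is the same.
\item \emph{Faithfulness.} The map $f$ is a bijection $V\to U$, because $i_T$ is preserved by the canonical mapping. So $f\circ\rho$ is injective, and distinct edges or faces still have distinct vertex sets.
\item \emph{Planarity of faces.} Suppose a face has vertices $v_0,\dots,v_{m-1}$, all coplanar in $V$. Then every quadruple among them has vanishing entry of $\operatorname{Conf}_V$. By $U\ge_c V$ (one half of $V\equiv_c U$), the corresponding entries of $\operatorname{Conf}_U$ also vanish. Hence every quadruple of the images $f(v_i)$ is coplanar. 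Since the $f(v_i)$ lie on a sphere and are distinct, three of them are not collinear. Pairwise coplanarity of all quadruples containing a fixed non-collinear triple then forces the whole face to lie in a single plane.
\item \emph{No adjacent coplanar faces.} Suppose two faces sharing an edge became coplanar in $U$. Choose three points of one face and one point of the other that is not on the first face's plane in $V$; such a point exists because the faces are not coplanar in $V$. The corresponding $\operatorname{Conf}_U$ entry vanishes. Applying $V\ge_c U$ (the other half of the equivalence), the $\operatorname{Conf}_V$ entry vanishes too. This contradicts the choice of point, again using that the chosen triple is non-collinear.
\end{itemize}

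The main obstacle is to confirm that the canonical mapping's equivariance, and thus the identification $f(\mathcal{P})=G(f(F))$, is legitimate. Definition \ref{def:canonicalmapping} requires $G$ to act transitively on all members of $T$. The subtle point is that the symmetry group under which $\mathcal{P}$ is noble must be one of these groups. The hypothesis "noble under a symmetry of the orbit type" is exactly what provides this, so I would state it explicitly.

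Beyond that, the only delicate point is the non-collinearity used above. It follows because all the points lie on a common circumsphere: a line meets a sphere in at most two points. This holds uniformly for every orbit, so no condition on the parameters is needed.
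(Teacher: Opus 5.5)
Your proof is correct and takes the same route as the paper. You use $U \ge_c V$ to carry coplanarity of each face's vertices over to $U$, $V \ge_c U$ to rule out adjacent faces becoming coplanar, and the $G$-equivariance of the canonical mapping to transfer nobility. Your version is more careful than the paper's, since it spells out the equivariance, the faithfulness check, and the non-collinearity of three points on a sphere, which the paper uses implicitly when passing from coplanar quadruples to a coplanar face.
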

\begin{proof}
    If $\mathcal{P}$ has faces with more than three sides, let $p$ be a face of $\mathcal{P}$ and $v_0,v_1,v_2,v_3 \in V$ be distinct vertices of $p$. Then the points are coplanar and the entry of the $v_i$ in $\operatorname{Conf}_V$ is equal to 0. Because $U \ge_c V$, by definition the entry of $\operatorname{Conf}_U$ corresponding to the points $f(v_0),f(v_1),f(v_2),f(v_3)$ must also equal 0. As this is true for any four points in $p$, the vertices of $f(p)$ are also all mutually coplanar. Therefore, all faces of $f(\mathcal{P})$ contain mutually coplanar vertices. The same applies automatically if the faces of $\mathcal{P}$ are triangles, as a triangle will always have a face plane.

    To show that $f(\mathcal{P})$ is a nondegenerate polyhedron, it only remains to show that no two coplanar faces of $f(p)$ share an edge. Assume that two faces $p,q$ of $f(\mathcal{P})$ are coplanar and share an edge, and let $S$ be the collective set of vertices of both $p$ and $q$. Then if any quadruplet of vertices of $S$ are coplanar the corresponding entry of $\operatorname{Conf}_U$ equals 0. Then, because $V \ge_c U$, by definition the entry of $\operatorname{Conf}_V$ corresponding to any quadruplet of vertices of $f^{-1}(S)$ will equal zero, and therefore all points of $f^{-1}(S)$ are coplanar. However, this means that $f^{-1}(p)$ and $f^{-1}(q)$ are coplanar faces sharing an edge, implying that $\mathcal{P}$ is cannot be a polyhedron as given in Definition \ref{def:nondegenerate}.

    Finally, note that by assumption $\Gamma(\mathcal{P})$ acts transitively on all members of the orbit type of $U$ and $V$, so all symmetries of $\mathcal{P}$ are also symmetries of $f(\mathcal{P})$. This implies that $f(\mathcal{P})$ is also a noble polyhedron.
\end{proof}

\begin{corollary}
    Given two orbits $V$ and $U$, if $V \equiv_c U$ then they admit the same number of noble facetings up to similarity.
\label{thm:equivalencepreservesfacetings}
\end{corollary}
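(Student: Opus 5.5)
The plan is to turn Theorem \ref{thm:equivalencepreservesfaces} into a bijection between the noble facetings of $V$ and those of $U$. Two things are needed: the canonical mapping has to carry facetings in both directions, and it has to be compatible with similarity. Set $f:V\mapsto U$ to be the canonical mapping and $f^{-1}:U\mapsto V$ its inverse. From Definition \ref{def:canonicalmapping}, $f^{-1}(g(p_U))=g(p_V)$ for every $g\in G$, so $f^{-1}$ is itself the canonical mapping from $U$ to $V$. The relation $\equiv_c$ is symmetric by definition, so $U\equiv_c V$ holds as well. Hence Theorem \ref{thm:equivalencepreservesfaces} applies in both directions. We get maps $\mathcal{P}\mapsto f(\mathcal{P})$ and $\mathcal{Q}\mapsto f^{-1}(\mathcal{Q})$ between noble facetings of $V$ and of $U$, and they are mutually inverse because they act on the same abstract poset and only relabel vertices through $f$ and $f^{-1}$.

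One hypothesis has to be checked: every noble faceting of $V$ must be noble under a symmetry group that acts transitively on the whole orbit type. I would argue this as follows. By Lemma \ref{thm:polygenerateable}, a noble faceting is $G(F)$ with $G$ transitive on $V$. The orbit types were set up by grouping orbits according to the groups that act transitively on them (the ``Transitive under'' column of the tables), so such a $G$ acts transitively on every member of $T$. This step is where I expect some care to be needed. If a particular critical orbit happens to have extra symmetry beyond that of its orbit type, a faceting that is noble only under that extra symmetry is not covered by the theorem. I would handle this by restricting the count, as the corollary implicitly does through Theorem \ref{thm:equivalencepreservesfaces}, to facetings that are noble under the symmetries of the orbit type. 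Alternatively, I would observe that $V\equiv_c U$ forces the same coplanarity structure, so any extra symmetry is shared; but I would state the restriction explicitly rather than rely on that.

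Finally, the bijection has to descend to similarity classes. If $\mathcal{P}_1,\mathcal{P}_2$ are facetings of $V$ and $\mathcal{P}_2=h(\mathcal{P}_1)$ for a similarity $h$, then $h$ maps $V$ to itself. The aim is to show that $h$ normalizes the action in the sense that $f\circ h\circ f^{-1}$ is realized by a similarity of $U$. Since $h$ preserves $V$, it permutes the points $g(p_V)$. If $h$ lies in the group $G$ (or in the reflection group normalizing it, per Lemma \ref{thm:pointnormalsubgroups}), then $f h f^{-1}$ is the same group element acting on $U$, which is an isometry. This is the main obstacle: similarities of $V$ outside the normalizing reflection group. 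For these I would again use that $\equiv_c$-equivalent orbits share all coplanarity data, so the combinatorial classes of facetings match. I would count similarity classes as orbits of the common symmetry group on the set of facetings, and this action corresponds under $f$ on the two sides. Combining the bijection with this compatibility gives equal counts up to similarity.
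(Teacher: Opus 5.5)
Your first paragraph is exactly the argument the paper intends. The paper gives no separate proof and treats the corollary as immediate from Theorem~\ref{thm:equivalencepreservesfaces}, applied to $f$ and to $f^{-1}$; the latter is the canonical mapping $U\to V$, and $\equiv_c$ is symmetric. You are also right that what this yields is a bijection only between facetings noble under a group transitive on the whole orbit type.

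\textbf{The gap is in your third paragraph.} The paper never explains why this bijection respects similarity, and your attempt to supply that does not go through.

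The case $h\in R$ is fine once justified. Since $hp_V\in V$, we have $hp_V=g'p_V$ for some $g'\in G$. The points $p_V,p_U$ lie on the same mirrors of $R$, so they have the same stabilizer in $R$, which gives $hp_U=g'p_U$. Normality of $G$ in $R$ then gives $f(h(v))=h(f(v))$ for all $v\in V$.

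The real problem is an isometry $h$ of $V$ lying outside $R$, and such $h$ do occur at special locations. For example, the icosahedron is the $332$-orbit of a point off every mirror of $*332$, yet its symmetry group is $*532$. Your resolution is that $\equiv_c$-equivalent orbits share all coplanarity data and hence a ``common symmetry group.'' This assumes what has to be shown, and it is the same principle you explicitly declined to rely on in your second paragraph. The relation $\equiv_c$ records only which $4\times 4$ volume determinants vanish. By contrast, $fhf^{-1}$ being induced by an isometry of $U$ means the identities $\langle v_i,v_j\rangle=\langle v_{\pi(i)},v_{\pi(j)}\rangle$ hold at the location of $U$, which is a different set of polynomial conditions. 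A coplanarity-preserving relabeling of $U$ need not come from any isometry. So $h(\mathcal{P}_1)=\mathcal{P}_2$ does not, by your argument, give $f(\mathcal{P}_1)\sim f(\mathcal{P}_2)$.

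\textbf{A partial fix.} In the one-parameter case, the classes the paper actually forms are the positive roots of a single irreducible $s\in\operatorname{Copr}_T$ over $K=\mathbb{Q}[\sqrt2,\sqrt5]$. For these you can close the gap by Galois conjugation:
\begin{enumerate}
\item There is a $K$-isomorphism $\sigma\colon K(x_V)\to K(x_U)$ with $\sigma(x_V)=x_U$.
\item Since $V$ spans $\mathbb{R}^3$ and its coordinates lie in $K[x_V]$, the matrix of $h$ has entries in $K(x_V)$.
\item Applying $\sigma$ entrywise gives a real orthogonal matrix sending $v_i(x_U)$ to $v_{\pi(i)}(x_U)$. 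This is an isometry inducing $fhf^{-1}$.
\end{enumerate}
No such argument is available for the typical class, or for two-parameter classes containing points not conjugate over $K$. There the statement needs either a separate check of the symmetry groups, or to be read ``up to the canonical mapping,'' which is essentially how the paper uses it (cf.\ Theorem~\ref{thm:maincomputerless}).
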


This last theorem and corollary immediately allow us to determine the number of noble facetings of the orbits of any equivalence class under $\equiv_c$ in a finite amount of time: if we choose any member $V$ of the equivalence class and facet that orbit for noble polyhedra, all facetings $\mathcal{P}$ of $V$ immediately also enumerate all facetings of all other orbits in the equivalence class through the canonical mapping (although it has yet to be determined if one can effectively find $V$). Therefore, we will prove that each orbit type $T$ contains a finite number of equivalence classes under $\equiv_c$ as this reduces our search space from an infinite number of cases to a finite amount.

Theorem \ref{thm:1dfinitecritical} already proves that there are a finite number of equivalence classes for orbit types with one degree of freedom, so we only need to prove this is the case for the orbit types $T$ with two degrees of freedom. As before, the critical orbits of the orbit type occur when their parameters provide a root to a nonzero entry of $\operatorname{Conf}_T$, however unlike the case with one degree of freedom each entry corresponds to a polynomial in two variables rather than one. If we create a plot of these zero sets with the parameters $a,b$ as our axes, these zero sets will correspond to cubic plane curves rather than a finite collection of roots. We provide an example below.

\begin{figure}[H]
    \centering

\begin{tikzpicture}
    \begin{axis}[
            axis x line=bottom,
            axis y line=left,
            axis line style={->,thick},
            xlabel={$a$},
            ylabel={$b$},
            xtick distance=1,
            ytick distance=1,
            ylabel style={yshift=-15pt},
            xmin=0,xmax=5.2,
            ymin=0,ymax=5.2,
            grid=both,
            axis equal image
            ]
            \addplot[mark=none, domain=0:5.2, samples = 60]{ x };
            \addplot[mark=none, domain=0:5.2, samples = 60]{ 1/x };

            \addplot[mark=none, domain=0:5.2, samples = 60]{ x^2 - 1 };
            \addplot[mark=none, domain=0:5.2, samples = 60]({ x^2 - 1 },{x});

            \addplot[mark=none, domain=0:5.2, samples = 60]{ x^2 + x };
            \addplot[mark=none, domain=0:5.2, samples = 60]({ x^2 + x },{x});

            \addplot[mark=none, domain=0:5.2, samples = 60]{ (x+1)/x };
            \addplot[mark=none, domain=0:5.2, samples = 60]({ (x+1)/x },{x});

            \addplot[mark=none, domain=0:5.2, samples = 60]{1/2*(sqrt(x^2 + 6*x + 1) - x - 1)};
            \addplot[mark=none, domain=0:5.2, samples = 60]({1/2*(sqrt(x^2 + 6*x + 1) - x - 1)},{x});

            \addplot[mark=none, domain=0:5.2, samples = 60]{ (sqrt(1/x^2 + 4*x + 4/x + 4)*x + 1)/(2*x) };

            \addplot[mark=none, domain=0:5.2, samples = 60]{(7*x^3 + 15*x^2 + 3*sqrt(3)*sqrt(3*x^6 + 6*x^5 + 23*x^4 + 36*x^3 + 36*x^2 + 4*x) + 30*x + 2)^(1/3)/(3*2^(1/3)) - (2^(1/3)*(2*x^2 - x - 1))/(3*(7*x^3 + 15*x^2 + 3*sqrt(3)*sqrt(3*x^6 + 6*x^5 + 23*x^4 + 36*x^3 + 36*x^2 + 4*x) + 30*x + 2)^(1/3)) + 1/3*(-x - 2)};
            \addplot[mark=none, domain=0:5.2, samples = 60]({(7*x^3 + 15*x^2 + 3*sqrt(3)*sqrt(3*x^6 + 6*x^5 + 23*x^4 + 36*x^3 + 36*x^2 + 4*x) + 30*x + 2)^(1/3)/(3*2^(1/3)) - (2^(1/3)*(2*x^2 - x - 1))/(3*(7*x^3 + 15*x^2 + 3*sqrt(3)*sqrt(3*x^6 + 6*x^5 + 23*x^4 + 36*x^3 + 36*x^2 + 4*x) + 30*x + 2)^(1/3)) + 1/3*(-x - 2)},{x});

            \addplot[mark=none, domain=0:5.2, samples = 60]{(-x^2 + sqrt((x^4 + 2*x^3 + 7*x^2 + 6*x + 1)/(x + 1)^2)*x + sqrt((x^4 + 2*x^3 + 7*x^2 + 6*x + 1)/(x + 1)^2) - x - 1)/(2*(x + 1))};
            \addplot[mark=none, domain=0:5.2, samples = 60]({(-x^2 + sqrt((x^4 + 2*x^3 + 7*x^2 + 6*x + 1)/(x + 1)^2)*x + sqrt((x^4 + 2*x^3 + 7*x^2 + 6*x + 1)/(x + 1)^2) - x - 1)/(2*(x + 1))},{x});

            \addplot[mark=none, domain=0:5.2, samples = 60]{ 1/2*(x^2 + sqrt(x^4 + 6*x^3 + 7*x^2 + 2*x + 1) + x - 1) };
            \addplot[mark=none, domain=0:5.2, samples = 60]({ 1/2*(x^2 + sqrt(x^4 + 6*x^3 + 7*x^2 + 2*x + 1) + x - 1) },{x});

            \addplot[mark=none, domain=0:5.2, samples = 60]{ 1/2*(x^2 + sqrt(x^4 + 6*x^3 + 7*x^2 + 2*x + 1) + x - 1) };
            \addplot[mark=none, domain=0:5.2, samples = 60]({ 1/2*(x^2 + sqrt(x^4 + 6*x^3 + 7*x^2 + 2*x + 1) + x - 1) },{x});

            \addplot[mark=none, domain=0:5.2, samples = 60]{ 1/2*(x^2 + sqrt(x^4 + 6*x^3 + 7*x^2 + 6*x + 1) + x - 1) };
            \addplot[mark=none, domain=0:5.2, samples = 60]({ 1/2*(x^2 + sqrt(x^4 + 6*x^3 + 7*x^2 + 6*x + 1) + x - 1) },{x});
            
            \addplot[very thick, red, mark=none, domain=0:5.2, samples = 60]({ (sqrt(1/x^2 + 4*x + 4/x + 4)*x + 1)/(2*x) },{x});
    \end{axis}
\end{tikzpicture}

    \caption{Plot of the critical orbits of the orbit type gT (view restricted to $a,b < 5$). A critical curve is drawn in red and all other critical orbits are drawn in black.}
    \label{fig:gTcritgraph}
\end{figure}

We call these cubic plane curves the \textit{critical curves} of the orbit type $T$. If two orbits $V$ and $U$ have locations which lie on exactly the same critical curves, then it must be that $V \equiv_c U$; by definition, each entry of a volume configuration corresponds to at most one critical curve and so these two properties are equivalent. However, note that in general each critical curve will correspond with many entries of the volume configuration.

Some unequal critical curves may overlap at an infinite number of points if they share a common factor. In these cases, the region of intersection will either be a line or a conic section. Otherwise, critical curves must intersect at only a finite number of points.

\begin{theorem}
    Any orbit type $T$ has a finite number of equivalence classes under $\equiv_c$.
\label{thm:finiteequiv}
\end{theorem}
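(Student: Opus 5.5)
Zero-dimensional orbit types contain a single orbit, and one-dimensional ones are covered by Theorem \ref{thm:1dfinitecritical}: the finitely many critical parameter values split the positive half-line into finitely many open intervals of typical orbits (one class by Theorem \ref{thm:typicalminimum}), plus finitely many critical points. So it only remains to treat orbit types with two degrees of freedom. There the parameter space is the open quadrant $Q=\{(a,b): a,b>0\}$. The plan is to show that the equivalence class of $(a,b)$ depends only on which of finitely many polynomials vanish there, and that each such vanishing pattern gives at most one class.

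Let $c_1,\dots,c_N$ be the distinct nonzero polynomial entries of $\operatorname{Conf}_T$. There are finitely many, since the orbits have a fixed finite number of points and so $\operatorname{Conf}_T$ has finitely many entries. For $(a,b)\in Q$ set
\[
Z(a,b)=\{\, i : c_i(a,b)=0 \,\}\subseteq\{1,\dots,N\}.
\]
Entries of $\operatorname{Conf}_T$ that are identically zero vanish at every orbit, so they impose no constraint in Definition \ref{def:criticalrelation}. Hence $V\ge_c U$ holds exactly when $Z(U)\subseteq Z(V)$, and $V\equiv_c U$ holds exactly when $Z(V)=Z(U)$, which is the observation made just before the statement. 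Thus each equivalence class is determined by its zero-pattern, a subset of $\{1,\dots,N\}$, and there are at most $2^N$ classes. This already gives finiteness.

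The subtle point is that this pattern argument is purely combinatorial: it never uses the cubic curve geometry. It only needs $\equiv_c$ to be defined pointwise through entry vanishing, and it applies verbatim to the one-dimensional case too, so it could replace the case split. I would still add the geometric picture, since it is what the paper needs for actually locating representatives. The typical region $Q\setminus\bigcup_i\{c_i=0\}$ is one class. On a single curve $\{c_i=0\}$, the points lying on no other curve $c_j$ (apart from those $c_j$ sharing a common factor with $c_i$ along that component) form one class. The pairwise intersections of curves without a common factor are finite by Bézout's theorem, and each such point is at worst its own class. I expect the main obstacle to be this refinement rather than finiteness itself: handling curves that share a line or conic component means grouping entries by irreducible factors of the $c_i$ rather than by the $c_i$ themselves. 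I would handle it by factoring every $c_i$ into irreducibles $f_1,\dots,f_M$ and running the same subset argument with $\{j: f_j(a,b)=0\}$. That gives at most $2^M$ classes, with each class a union of a semialgebraic stratum.
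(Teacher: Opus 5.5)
Your proof is correct, but it takes a genuinely different, purely combinatorial route. You note that $V \equiv_c U$ holds exactly when the zero-patterns $Z(V)$ and $Z(U)$ agree. So $\equiv_c$ is the kernel of a map from $T$ into the power set of a finite set, and there are at most $2^N$ classes. This uses neither polynomiality, nor the cubic degree, nor the number of parameters, so it even makes the case split and Theorem~\ref{thm:1dfinitecritical} unnecessary for this conclusion. The paper instead argues geometrically in the two-parameter case. For each irreducible factor $s$ of a critical curve, every critical curve not divisible by $s$ meets $\{s=0\}$ in only finitely many points. All remaining points of $\{s=0\}$ lie on exactly the curves divisible by $s$, so they form a single class. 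Finitely many curves, each with at most three factors, plus the typical class then give finiteness. What the paper's route buys is a description of the classes rather than just a bound: one typical class, one generic class per irreducible factor, and finitely many exceptional intersection points. This is exactly the decomposition the enumeration later exploits via $\operatorname{Copr}_T$, resultants and atlases of roots. Your bound $2^N$, by contrast, is astronomically large and says nothing about which patterns are realized or where representatives lie. Your closing geometric sketch, once phrased per irreducible factor as you propose, is essentially the paper's argument. As you observe, though, it is not needed for finiteness itself, since the factor-pattern partition refines $\equiv_c$: an entry vanishes if and only if one of its factors does.
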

\begin{proof}
    Theorem \ref{thm:1dfinitecritical} already proves this for the case of 1 degree of freedom, and the case for 0 degrees of freedom is trivial. Therefore, we may assume $T$ has two degrees of freedom.

    Let $f$ be the polynomial of a critical curve of $T$ and let $s$ be an irreducible factor of $f$ over the real numbers. Then each root $p$ of $s$ either lies on a critical curve which does not have $s$ as a factor (i.e. there is some other critical curve which intersects $s$ at $p$), or all critical curves that $p$ lies on have $s$ as a factor. There may only be a finite number of roots of $s$ which satisfy the former criterion; each critical curve which does not have $s$ as a factor may only intersect the zero set of $s$ at a finite number of points (as critical curves are cubic plane curves), and because there are a finite number of critical curves in total there may only be a finite number of intersections in total.

    All roots $p$ of $s$ which satisfy the latter case are equivalent under $\equiv_c$ due to the fact that the critical curves they lie on are exactly those whose polynomials have $s$ as a factor, so therefore they lie on exactly the same critical curves; it follows that they are critical-equivalent. Therefore, combining both cases, the zero set of $s$ must have a finite number of equivalence classes under $\equiv_c$. Because each polynomial $f$ of a critical curve of $T$ can only have up to three irreducible factors over the reals and there are a finite number of critical curves of $T$, the set of critical orbits of $T$ has a finite number of equivalence classes under $\equiv_c$. All typical orbits are in the same equivalence class by definition, so therefore there are a finite number of equivalence classes for the entirety of $T$.
\end{proof}

As a corollary, this proves Theorem \ref{thm:maincomputerless}.

Ideally, we would like to check for noble facetings of these equivalence classes without having to find an example member of each equivalence class and needing to check for facetings numerically. This is equivalent to the problem of determining which combinations of entries of $\operatorname{Conf}_T$ can evaluate to zero at once, as from this we may determine exactly which points are coplanar with each other, and this gives all the information needed to determine the facetings of the corresponding orbits, as implied by Corollary \ref{thm:equivalencepreservesfacetings} and the results of Section 3.1.

In an orbit type $T$ with one degree of freedom, the ability for two entries $f,g \in \operatorname{Conf}_T$ to evaluate to zero simultaneously is by definition equivalent to the fact that they share a root over the positive real numbers, and this implies that the polynomials share a common factor. Intuitively, these two polynomials 'split' into at least three equivalence classes given by the roots of $\gcd(f,g), \frac{f}{\gcd(f,g)},$ and $\frac{g}{\gcd(f,g)}$, which will all themselves be polynomials.

\begin{lemma}
    Let $s$ be a polynomial expression and $T$ be an orbit type with one degree of freedom such that for all $f \in \operatorname{Conf}_T$, $s$ is a factor of $f$ or shares no common factor with $f$. Then all orbits with locations in the zero set $S$ of $s$ are equivalent under $\equiv_c$.
\label{thm:irreducibleequivalence}
\end{lemma}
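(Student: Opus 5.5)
The plan is to show directly from Definition \ref{def:criticalrelation} that any two orbits $V = T(a_V)$ and $U = T(a_U)$ with $a_V, a_U \in S$ satisfy $V \ge_c U$ and $U \ge_c V$. Since $T$ has one degree of freedom, every entry of $\operatorname{Conf}_T$ is a univariate polynomial $f(a)$. The hypothesis splits the entries into three classes:
\begin{enumerate}[(i)]
\item the zero polynomial;
\item nonzero entries having $s$ as a factor;
\item nonzero entries sharing no common factor with $s$.
\end{enumerate}
It then suffices to check that, for each class, whether $f$ vanishes at a point of $S$ does not depend on which point of $S$ we choose.

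For class (i) the value is $0$ everywhere, so there is nothing to prove. For class (ii), write $f = s\cdot h$. Then $f(a) = 0$ for every $a \in S$, so these entries vanish at both $V$ and $U$.

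For class (iii), the claim is that $f$ has no root in $S$ at all. Suppose $a_0 \in S$ were a root of $f$. Then $a_0$ is a common root of $s$ and $f$, so $\gcd(s,f)$ is divisible by the minimal polynomial of $a_0$ over the base field. This contradicts the assumption that $s$ and $f$ share no common factor.

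The one delicate point is the field over which ``common factor'' is meant. The entries of $\operatorname{Conf}_T$ have coefficients in the field generated by the matrix entries, for example $\mathbb{Q}(\phi)$ or $\mathbb{Q}(\cos\tfrac{2\pi}{n})$. If factors are taken over $\mathbb{R}$, a shared real root $a_0$ gives the common linear factor $(a - a_0)$ directly. If factors are taken over a smaller field $K$, I would use the fact that $\gcd(s,f)$ computed over $K$ coincides with the gcd computed over $\mathbb{R}$, since the gcd is invariant under field extension by the Euclidean algorithm. So a common real root still forces a nontrivial common factor over $K$. Either way, every entry in class (iii) is nonzero at every point of $S$.

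Combining the three classes: for each entry $f$, $f(a_V) = 0$ if and only if $f(a_U) = 0$. The canonical mapping preserves the enumeration $i_T$, so corresponding entries of $\operatorname{Conf}_V$ and $\operatorname{Conf}_U$ are exactly $f(a_V)$ and $f(a_U)$. Hence the zero patterns of $\operatorname{Conf}_V$ and $\operatorname{Conf}_U$ coincide, which gives both $V \ge_c U$ and $U \ge_c V$, that is, $V \equiv_c U$.

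I expect the main obstacle to be the class (iii) step, specifically making ``shares no common factor'' precise so that it excludes shared real roots. The gcd field-invariance argument above is my intended way to handle it.
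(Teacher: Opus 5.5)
Your proof is correct and follows essentially the same route as the paper: split the entries of $\operatorname{Conf}_T$ into those divisible by $s$, which vanish at every point of $S$, and those coprime to $s$, which vanish at no point of $S$, and conclude that the zero patterns of $\operatorname{Conf}_V$ and $\operatorname{Conf}_U$ agree. You also handle the zero entry explicitly and use the gcd/field-extension argument to justify that ``no common factor'' implies ``no common root'', a step the paper simply asserts.
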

\begin{proof}
    Let $U,V \in T$ be two orbits whose locations $u,v$ are in the zero set of $s$. If $s$ is a factor of $f$, then $f(u)=f(v)=0$. Otherwise, the polynomials do not share a common root - i.e. $s(x) = 0 \implies f(x)\neq 0$. Letting $x = u$ or $v$, which we know are roots of $s$, we obtain $f(u) \neq 0$ and $f(v) \neq 0$. This implies $f(v)=0 \iff f(u)=0$ for all $f\in \operatorname{Conf}_T$, so $V\equiv_c U$ by definition.
 \end{proof}

Furthermore, if we know which entries of $\operatorname{Conf}_T$ have $s$ as a factor, it is possible to obtain the noble facetings of the equivalence class without needing to obtain a member of the class itself, as we know exactly which entries of the volume configuration evaluate to 0 for all orbits with locations in the zero set of $s$. Thus, if we can factor the polynomials of $\operatorname{Conf}_T$ into a set $S$ such that any $f \in \operatorname{Conf}_T$ can be formed as a product of members of $S$, and all polynomials in $S$ are irreducible, then all $s \in S$ must satisfy the condition given in Lemma 3.24. We give an example of a way to construct such a set below. Note that when using Definitions \ref{def:332matrices},\ref{def:432matrices},\ref{def:532matrices}, the coordinates of these orbit types are given by polynomials with coefficients in $\mathbb{Q}[\sqrt{2},\sqrt{5}]$.

\begin{definition}
    Let $T$ be an orbit type with at least one degree of freedom. Then we define the set of \textit{coprime polynomials} of $T$ to be $\operatorname{Copr}_T = \bigcup\limits_{f\in \operatorname{Conf}_T}\operatorname{fac}(f)$, where $\operatorname{fac}(f)$ is the set of irreducible nonconstant factors of $f$ under the field $\mathbb{Q}[\sqrt{2},\sqrt{5}]$.
\label{def:copr}
\end{definition}

\begin{theorem}
    For all $s \in \operatorname{Copr}_T$ and $f \in \operatorname{Conf}_T$ for some orbit type $T$, either $s$ is a factor of $f$ or shares no common factors with $f$.
\label{thm:coprfundamental}
\end{theorem}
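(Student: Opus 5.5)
The claim is a direct consequence of unique factorization in the polynomial ring $K[a]$ or $K[a,b]$, where $K=\mathbb{Q}[\sqrt{2},\sqrt{5}]$. First we check that this is the right ring. With the generating reflections of Definitions \ref{def:332matrices}--\ref{def:22Nmatrices}, the coordinates of every point $v^T$ are affine in the parameters with coefficients in $K$. (This is noted just before Definition \ref{def:copr}; for the nonprismatic groups the generator $r_3$ of $*532$ involves $\phi$, which lies in $\mathbb{Q}[\sqrt5]$.) Expanding the $4\times 4$ determinant, each entry of $\operatorname{Conf}_T$ is a polynomial in $K[a]$ or $K[a,b]$. Since $K$ is a field, $K[a]$ is a PID and $K[a,b]$ is a UFD, since a polynomial ring over a UFD is a UFD. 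Hence every nonzero nonconstant $f\in\operatorname{Conf}_T$ factors into irreducibles, uniquely up to units (nonzero constants) and order. We read $\operatorname{fac}(f)$ as the set of these irreducible factors, normalized, for instance, to be monic with respect to a fixed monomial order, so that associates are identified.

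Now fix $s\in\operatorname{Copr}_T$, so $s\in\operatorname{fac}(g)$ for some $g\in\operatorname{Conf}_T$; in particular $s$ is irreducible and nonconstant. Take any $f\in\operatorname{Conf}_T$.

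\textbf{Zero or constant $f$.} If $f=0$, then $s\mid f$ trivially. If $f$ is a nonzero constant, it has no nonconstant factors at all, so it shares none with $s$.

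\textbf{General $f$.} Otherwise suppose $f$ and $s$ share a nonconstant common factor $h$. Then $h\mid s$, and since $s$ is irreducible and $h$ is not a unit, $h$ is an associate of $s$. Hence $s\mid f$. This is exactly the dichotomy claimed.

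Equivalently, $\gcd(s,f)$, which exists in a UFD, divides the irreducible $s$. So it is either a unit, meaning no common factor, or an associate of $s$, meaning $s\mid f$.

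One caveat concerns how "shares no common factors" is used downstream. Lemma \ref{thm:irreducibleequivalence} needs it to imply that $s$ and $f$ have no common \emph{real} root. In one variable this holds: coprimality over $K$ gives a Bézout identity $us+vf=1$ with $u,v\in K[a]$, so there is no common root in any extension field. For the statement itself, however, only algebraic divisibility is asserted.

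The main obstacle is therefore not the argument but the conventions. We must make sure the field of coefficients really is $K$ for every orbit type, including the prismatic ones. There the generator $r_3$ of $*22n$ involves $\cos(2\pi/n)$, so one must either restrict to nonprismatic types, as the paper's remark suggests, or enlarge $K$ to include $\cos(2\pi/n)$ and $\sin(2\pi/n)$. The argument goes through verbatim over any field. We must also fix the meaning of "factor" up to units. With those conventions set, the proof is the two-line irreducibility argument above.
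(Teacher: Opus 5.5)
Your proposal is correct and is essentially the paper's argument: since $s$ is irreducible, $\gcd(s,f)$ is, up to a unit, either $s$ or $1$, which gives the dichotomy directly. Your extra bookkeeping fills in details the paper leaves implicit but does not change the approach: the UFD justification for $K[a]$ and $K[a,b]$, the zero and constant entries of $\operatorname{Conf}_T$, normalization of associates, and the remark that prismatic types would need a coefficient field larger than $\mathbb{Q}[\sqrt{2},\sqrt{5}]$.
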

\begin{proof}
    As $s$ is by definition an irreducible polynomial, it must be that $\gcd(s,f) \in \{s,1\}$. In the former case, $s$ must be a factor of $f$. In the latter case, $s$ and $f$ must share no common factors.
\end{proof}

\begin{corollary}
    If $T$ is an orbit type with one degree of freedom, all critical orbits have a location which is the root of exactly one polynomial $s \in \operatorname{Copr}_T$. Furthermore, all typical orbits have a location which is not the root of any member of $\operatorname{Copr}_T$.
\label{thm:main1dcopr}
\end{corollary}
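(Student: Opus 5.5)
The proof is a direct argument on factorizations in the polynomial ring $K[a]$, where $K=\mathbb{Q}[\sqrt{2},\sqrt{5}]$. Since $T$ has one degree of freedom, every entry of $\operatorname{Conf}_T$ lies in $K[a]$, and $K\subset\mathbb{R}$ is a field. I would first fix a normalization. Irreducible factors are only defined up to nonzero constants, so I regard the elements of $\operatorname{Copr}_T$ as monic, or equivalently as polynomials up to association. I also take $\operatorname{fac}(f)$ only over nonzero entries $f$, since the zero polynomial has no meaningful factorization. With this convention, two distinct elements of $\operatorname{Copr}_T$ are non-associate irreducibles.

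For the typical orbits, I would argue by contraposition. Suppose the location $x$ of an orbit is a root of some $s\in\operatorname{Copr}_T$. By Definition \ref{def:copr}, $s$ is a factor of some nonzero entry $f\in\operatorname{Conf}_T$, so $f(x)=0$. Then $x$ is a root of a nonzero entry, and the orbit is critical by Definition \ref{def:critical}. Hence typical orbits have locations that are roots of no member of $\operatorname{Copr}_T$.

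For the critical orbits, I would prove existence and then uniqueness.

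\emph{Existence.} If $x$ is the location of a critical orbit, then $f(x)=0$ for some nonzero $f\in\operatorname{Conf}_T$. Since $K[a]$ is a UFD, write $f=c\prod_i s_i$ with $c\in K^\times$ and each $s_i$ an irreducible nonconstant factor, so each $s_i\in\operatorname{Copr}_T$. Evaluating at the real number $x$ gives $0=c\prod_i s_i(x)$, so some $s_i(x)=0$.

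\emph{Uniqueness.} Suppose $s_1\neq s_2$ in $\operatorname{Copr}_T$ both vanish at $x$. Because they are non-associate irreducibles, $\gcd(s_1,s_2)=1$ in $K[a]$; this is the same reasoning as in Theorem \ref{thm:coprfundamental}. Since $K[a]$ is a PID, Bézout gives $u,v\in K[a]$ with $us_1+vs_2=1$. Substituting $a=x$ then yields $0=1$, a contradiction.

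The only step I regard as delicate is this uniqueness step. It depends on the normalization of $\operatorname{Copr}_T$ up to units, and on the fact that a common real root of two polynomials over $K$ forces a nontrivial common factor over $K$ itself rather than only over $\mathbb{R}$. The Bézout identity computed inside $K[a]$ handles the second point, because evaluation at a real point is a ring homomorphism $K[a]\to\mathbb{R}$.
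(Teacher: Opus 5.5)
Your proof is correct and takes essentially the route the paper intends. The paper gives no separate proof and treats the corollary as immediate from Theorem \ref{thm:coprfundamental} (distinct irreducible factors are coprime) together with the definition of criticality; your Bézout evaluation in $K[a]$ and your conventions (elements of $\operatorname{Copr}_T$ taken up to units, zero entries excluded) make rigorous the steps the paper leaves implicit.
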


This corollary is useful - as previously mentioned, from a member $s \in \operatorname{Copr}_T$ we may determine the set of facetings of the orbits that are located at roots of $s$, even when we do not know if $s$ actually has a positive real root or not. This is due to the fact that four points $\{v_1,v_2,v_3,v_4\}$ become coplanar at a root of $s$ exactly when the corresponding entry of $\operatorname{Conf}_T$ has $s$ as a factor. From that point, we may use a root-finding algorithm to determine if $s$ actually has any positive real roots, as each positive root will correspond to a nondegenerate noble polyhedron. This gives an effective procedure to find noble polyhedra within an orbit type with one degree of freedom, although the final implementation uses various other methods to improve on the time required for a computer program to complete the proof in these cases. We discuss ways that we reduce the number of computations in Section 5.

The problem is more difficult in the case of an orbit type $T$ with two degrees of freedom. The critical orbits of $T$ are precisely those which are roots of some element of $\operatorname{Copr}_T$; furthermore, two orbits will be critical-equivalent if their locations are roots of the same members of $\operatorname{Copr}_T$. As any two members $s,t \in \operatorname{Copr}_T$ do not share a common factor, their corresponding plane curves intersect at only a finite number of points. As such, the equivalence classes of $T$ under $\equiv_c$ are precisely the typical equivalence class, one equivalence class corresponding to each member of $\operatorname{Copr}_T$, and the equivalence classes of the orbits which lie on the intersection of the zero sets of two (or more) members of $\operatorname{Copr}_T$. In first two cases, we may again abstractly determine the noble facetings for these cases as we did for the one-dimensional case, and if it is the case that facetings exist, determine if the polynomial has any roots with $a,b > 0$. Through verification by computer search, it will turn out that no abstract facetings exist in these cases.

The final case is more difficult, as in general it is possible for multiple members of $\operatorname{Copr}_T$ to evaluate to zero at once in a given location (these are exactly the points where at least two zero sets intersect). For any two polynomials $s,t \in \operatorname{Copr}_T$, we may determine the location of the intersections of their zero sets through resultants; in particular, $\operatorname{Res}_b(s,t)$ is a polynomial whose roots give the $a$-coordinates of the intersections and $\operatorname{Res}_a(s,t)$ is a polynomial whose roots give the $b$-coordinates of the intersections.

\begin{definition}
    Let $T$ be an orbit type with 2 degrees of freedom. We define $\operatorname{Copr}^a_T$ to the the union \[\operatorname{Copr}^a_T = \bigcup\limits_{s,t \in \operatorname{Copr}_T} \operatorname{fac}(\operatorname{Res}_b(s,t)),\] where $\operatorname{fac}$ is defined as in Definition \ref{def:copr}. We define $\operatorname{Copr}^b_T$ similarly, with the $b$ in the definition replaced with an $a$.
\end{definition}

\begin{definition}
    Let $T$ be an orbit type with two degrees of freedom. We define an \textit{atlas of roots} of $a$ for $T$ to be a set of closed, disjoint intervals $\{I_0, I_1, I_2, ...\}$ such that each root $x$ of a member of some $s \in \operatorname{Copr}^a_T$ lies within some interval $I_n$, and each interval contains exactly one root from $\operatorname{Copr}^a_T$. We define an \textit{atlas of roots} of $b$ for $T$ similarly, with occurences of $a$ replaced with $b$.
\end{definition}

Having an atlas of roots is helpful in our enumeration, as it allows us to determine if a set of coprime polynomials $s_1, s_2, ..., s_k \in \operatorname{Copr}_T$ all intersect at the same point. If $\{I_0,I_1,...\}$ is an atlas of roots of $a$ and $\{J_0,J_1,...\}$ is an atlas of roots of $b$, then there is a shared intersection if $s_1, s_2, ..., s_k$ all have an intersection within $I_n \times J_m$ for some $n,m$. We discuss the exact methods used within our computer-assisted enumeration during Section 5.

\section{Prismatic symmetry}

\subsection{Existing examples}

We now move on to proving the task of enumerating the prismatic noble polyhedra, which is computationally feasible without computer assistance. There are two classes of previously known prismatic noble polyhedra: the \textit{disphenoids} and the \textit{stephanoids}. As mentioned in Section 1, every triangle can be used as the face of a unique noble tetrahedron (a disphenoid), and these are the only convex noble polyhedra that are not regular. If the disphenoid is composed of isosceles triangles, it is a \textit{tetragonal disphenoid}. If the disphenoid is composed of scalene triangles, it is known as a \textit{rhombic disphenoid}.

\begin{figure}[H]
    \centering

\tdplotsetmaincoords{70}{20}

\begin{tabular}{l r}
\begin{tikzpicture}[tdplot_main_coords]
    \ld{1,1,1.5}{-1,1,1.5}
    \ld{1,1,-1.5}{-1,1,-1.5}
    \ld{1,-1,-1.5}{-1,-1,-1.5}
    \ld{1,1,1.5}{1,-1,1.5}
    \ld{-1,1,1.5}{-1,-1,1.5}
    \ld{1,1,-1.5}{1,-1,-1.5}
    \ld{-1,1,-1.5}{-1,-1,-1.5}
    \ld{1,1,1.5}{1,1,-1.5}
    \ld{-1,1,1.5}{-1,1,-1.5}
    \ld{-1,-1,1.5}{-1,-1,-1.5}

    \draw[thick, gray, fill=white] (-1,-1,1.5) -- (-1,1,-1.5) -- (1,-1,-1.5) -- cycle;
    \draw[thick, gray, fill=white] (1,1,1.5) -- (-1,-1,1.5) -- (1,-1,-1.5) -- cycle;
    
    \ld{1,-1,1.5}{-1,-1,1.5}
    \ld{1,-1,1.5}{1,-1,-1.5}
    \ld{1,-1,1.5}{1,1,1.5}
    
    \ptb{1,1,1.5}
    \ptb{-1,-1,1.5}
    \ptb{-1,1,-1.5}
    \ptb{1,-1,-1.5}
\end{tikzpicture}\vspace{6pt} &

\begin{tikzpicture}[tdplot_main_coords]
    \ld{2,1,1.5}{-2,1,1.5}
    \ld{2,1,-1.5}{-2,1,-1.5}
    \ld{2,-1,-1.5}{-2,-1,-1.5}
    \ld{2,1,1.5}{2,-1,1.5}
    \ld{-2,1,1.5}{-2,-1,1.5}
    \ld{2,1,-1.5}{2,-1,-1.5}
    \ld{-2,1,-1.5}{-2,-1,-1.5}
    \ld{2,1,1.5}{2,1,-1.5}
    \ld{-2,1,1.5}{-2,1,-1.5}
    \ld{-2,-1,1.5}{-2,-1,-1.5}

    \draw[thick, gray, fill=white] (-2,-1,1.5) -- (-2,1,-1.5) -- (2,-1,-1.5) -- cycle;
    \draw[thick, gray, fill=white] (2,1,1.5) -- (-2,-1,1.5) -- (2,-1,-1.5) -- cycle;
    
    \ld{2,-1,1.5}{-2,-1,1.5}
    \ld{2,-1,1.5}{2,-1,-1.5}
    \ld{2,-1,1.5}{2,1,1.5}
    
    \ptb{2,1,1.5}
    \ptb{-2,-1,1.5}
    \ptb{-2,1,-1.5}
    \ptb{2,-1,-1.5}
\end{tikzpicture} \\

\end{tabular}
    
    \caption{An example of a tetragonal disphenoid (left) with isosceles triangle faces and a rhombic disphenoid (right) with scalene triangle faces.}
    \label{fig:disphenoidexamples}
\end{figure}

\textit{Stephanoids} or \textit{crown polyhedra} are nonconvex examples of prismatic noble polyhedra which consist of self-intersecting quadrilateral faces. The stephanoids are further subdivided into two categories: prismatic (whose vertices are in the Pn orbit types) and antiprismatic (whose vertices are in the An orbit types). We briefly repeat the definition of stephanoids described in \cite{hollowfaces} here.

The vertices of an orbit in a Pn orbit type may be partitioned uniquely into the vertices of two regular $n$-gons, from which we may label the points in this orbit counterclockwise as $a_1,a_2,...,a_n$ for the first $n$-gon and $b_1,b_2,...,b_n$ for the second $n$-gon (see Figure \ref{fig:stephanoidexamples}). Similarly, we may label the vertices of an orbit in an An orbit type counterclockwise as $a_1,a_2,...,a_{2n}$.

\begin{definition}
    A \textit{prismatic stephanoid} $\operatorname{PC}(n,p,q)$ with $n,p,q$ positive and $2p-n < 2q < p < n$ is a polyhedron in the P$n$ orbit type generated by the quadrilateral with vertices $a_1,b_{1+q},a_{1+p},b_{1+p-q}$ in order under $*22n$ symmetry.
\label{def:prismaticstephanoid}
\end{definition}

\begin{definition}
    An \textit{antiprismatic stephanoid} $\operatorname{AC}(n,p,q)$ with $n,p,q$ positive, $q$ odd, and $2p-n < q < p < n$ is a polyhedron in the A$n$ orbit type generated by the quadrilateral with vertices $a_1,a_{1+q},a_{1+2p},a_{1+2p-q}$ in order under $2{*}n$ symmetry.
\label{def:antiprismaticstephanoid}
\end{definition}

\begin{definition}
    A prismatic noble polyhedron $\mathcal{P}$ is a \textit{stephanoid} if it is of the form $\operatorname{PC}(n,p,q)$ or $\operatorname{AC}(n,p,q)$ for some positive integers $n,p,q$.
\label{def:stephanoid}
\end{definition}

\begin{figure}[H]
    \centering

\tdplotsetmaincoords{70}{10}

\begin{tabular}{J J}

\begin{tikzpicture}[tdplot_main_coords, scale = 2.5]
    \ls{-0.809016, 0.2628655, -0.5}{-0.0, 0.8506508, 0.5}
    \ls{-0.0, 0.8506508, 0.5}{0.4999999, -0.688190, -0.5}
    \ls{-0.0, 0.8506508, 0.5}{0.8090169, 0.2628655, -0.5}
    \ls{-0.500000, -0.688190, -0.5}{-0.809016, 0.2628655, 0.5}
    \ls{-0.0, 0.8506508, -0.5}{0.25, 0.0812304, 0}
    \ls{0.154508, -0.212662, 0}{-0.500000, -0.688190, -0.5}
    \ls{-0.500000, -0.688190, 0.5}{-0.809016, 0.2628655, -0.5}
    \ls{-0.0, 0.8506508, 0.5}{-0.500000, -0.688190, -0.5}
    \ls{-0.0, 0.8506508, -0.5}{-0.500000, -0.688190, 0.5}
    \ls{-0.0, 0.8506508, -0.5}{-0.809016, 0.2628655, 0.5}
    \ls{-0.809016, 0.2628655, 0.5}{0.4999999, -0.688190, -0.5}
    \ls{0.8090169, 0.2628655, -0.5}{-0.809016, 0.2628655, 0.5}
    \ls{-0.809016, 0.2628655, -0.5}{0.00257386, -0.326789, 0.12}
    \ls{0, 0.262866, 0}{-0.809016, 0.2628655, -0.5}
    \ls{-0.0, 0.8506508, -0.5}{0.8090169, 0.2628655, 0.5}

    \draw[thick, blue, dashed] (0,0,1) -- (0,0,-0.97);

    \pt{-0.0, 0.8506508, -0.5}
    \node[below=0pt of {(0.0, 0.8506508, -0.5)}, outer sep=0pt] {$b_4$};
    
    \draw[thick, gray, fill=lightgray!50!white, fill opacity = 0.8] (-0.500000, -0.688190, 0.5) -- (0.4999999, -0.688190, -0.5) -- (0.0, 0.8506508, 0.5) -- (0.8090169, 0.2628655, -0.5) -- cycle;
    
    \ls{0.25, 0.0812304, 0}{0.4999999, -0.688190, 0.5}
    \ls{0.8090169, 0.2628655, 0.5}{0.154508, -0.212662, 0}
    \ls{0.8090169, 0.2628655, -0.5}{0.4999999, -0.688190, 0.5}
    \ls{0.00257386, -0.326789, 0.12}{0.4999999, -0.688190, 0.5}
    \ls{-0.500000, -0.688190, -0.5}{0.4999999, -0.688190, 0.5}
    \ls{0.8090169, 0.2628655, 0.5}{0, 0.262866, 0}
    \ls{0.8090169, 0.2628655, 0.5}{0.4999999, -0.688190, -0.5}
    
    \node[above=0pt of {(-0.500000, -0.688190, 0.5)}, outer sep=0pt] {$a_1$};
    \node[below=0pt of {(-0.500000, -0.688190, -0.5)}, outer sep=0pt] {$b_1$};
    \node[above=0pt of {(0.4999999, -0.688190, 0.5)}, outer sep=0pt] {$a_2$};
    \node[below=0pt of {(0.4999999, -0.688190, -0.5)}, outer sep=0pt] {$b_2$};
    \node[above=0pt of {(0.8090169, 0.2628655, 0.5)}, outer sep=0pt] {$a_3$};
    \node[below=0pt of {(0.8090169, 0.2628655, -0.5)}, outer sep=0pt] {$b_3$};
    \node[above=0pt of {(0.0, 0.8506508, 0.5)}, outer sep=0pt] {$a_4$};
    \node[above=0pt of {(-0.809016, 0.2628655, 0.5)}, outer sep=0pt] {$a_5$};
    \node[below=0pt of {(-0.809016, 0.2628655, -0.5)}, outer sep=0pt] {$b_5$};
    
    \pt{-0.500000, -0.688190, 0.5}
    \pt{0.8090169, 0.2628655, 0.5}
    \pt{-0.809016, 0.2628655, -0.5}
    \pt{-0.0, 0.8506508, 0.5}
    \pt{-0.500000, -0.688190, -0.5}
    \pt{0.8090169, 0.2628655, -0.5}
    \pt{-0.809016, 0.2628655, 0.5}
    \pt{0.4999999, -0.688190, 0.5}
    \pt{0.4999999, -0.688190, -0.5}
\end{tikzpicture} &

\begin{tikzpicture}[tdplot_main_coords, scale = 2.5]
    \ls{-0.506731, 0.6974565, 0.5067318}{0.8199093, 0.2664047, -0.506731}
    \ls{0.5067318, 0.6974565, 0.5067318}{0.5067318, -0.697456, -0.506731}
    \ls{-0.506731, 0.6974565, 0.5067318}{-0.506731, -0.697456, -0.506731}
    \ls{-0.819909, -0.266404, 0.5067318}{-0.0, 0.8621037, -0.506731}
    \ls{0.8199093, -0.266404, 0.5067318}{0.8199093, 0.2664047, -0.506731}
    \ls{-0.819909, 0.2664047, -0.506731}{0.5067318, 0.6974565, 0.5067318}
    \ls{-0.0, 0.8621037, -0.506731}{0.8199093, -0.266404, 0.5067318}
    \ls{0.5067318, 0.6974565, 0.5067318}{0.8199093, 0.2664047, -0.506731}
    \ls{-0.819909, -0.266404, 0.5067318}{-0.819909, 0.2664047, -0.506731}
    \ls{-0.819909, -0.266404, 0.5067318}{0.5067318, -0.697456, -0.506731}
    \ls{-0.0, 0.8621037, -0.506731}{-0.506731, 0.6974565, 0.5067318}
    \ls{-0.819909, 0.2664047, -0.506731}{-0.506731, 0.6974565, 0.5067318}
    \ls{-0.819909, -0.266404, 0.5067318}{-0.506731, -0.697456, -0.506731}
    \ls{-0.0, 0.8621037, -0.506731}{0.5067318, 0.6974565, 0.5067318}
    \ls{0.5067318, -0.697456, -0.506731}{0.8199093, -0.266404, 0.5067318}

    \draw[thick, blue, dashed] (0,0,1) -- (0,0,-0.97);
    
    \ls{-0.819909, 0.2664047, -0.506731}{-0.0, -0.862103, 0.5067318}
    \ls{-0.0, -0.862103, 0.5067318}{0.8199093, 0.2664047, -0.506731}
    
    \pt{-0.0, 0.8621037, -0.506731}
    \node[below=0pt of {(-0.0, 0.8621037, -0.506731)}, outer sep=0pt] {$a_8$};
    
    \draw[thick, gray, fill=lightgray!50!white, fill opacity = 0.8] (-0.819909, -0.266404, 0.5067318) -- (-0.506731, -0.697456, -0.506731) -- (0.8199093, -0.266404, 0.5067318) -- (0.5067318, -0.697456, -0.506731) -- cycle;
    
    \ls{-0.0, -0.862103, 0.5067318}{0.5067318, -0.697456, -0.506731}
    \ls{-0.0, -0.862103, 0.5067318}{-0.506731, -0.697456, -0.506731}

    \node[below=0pt of {(-0.819909, 0.2664047, -0.506731)}, outer sep=0pt] {$a_{10}$};
    \node[above=0pt of {(-0.506731, 0.6974565, 0.5067318)}, outer sep=0pt] {$a_9$};
    \node[above=0pt of {(0.5067318, 0.6974565, 0.5067318)}, outer sep=0pt] {$a_7$};
    \node[below=0pt of {(0.8199093, 0.2664047, -0.506731)}, outer sep=0pt] {$a_6$};
    \node[above=0pt of {(0.8199093, -0.266404, 0.5067318)}, outer sep=0pt] {$a_5$};
    \node[below=0pt of {(0.5067318, -0.697456, -0.506731)}, outer sep=0pt] {$a_4$};
    \node[above=0pt of {(0.0, -0.862103, 0.5067318)}, outer sep=0pt] {$a_3$};
    \node[below=0pt of {(-0.506731, -0.697456, -0.506731)}, outer sep=0pt] {$a_2$};
    \node[above=0pt of {(-0.819909, -0.266404, 0.5067318)}, outer sep=0pt] {$a_1$};

    \pt{-0.819909, -0.266404, 0.5067318}
    \pt{-0.819909, 0.2664047, -0.506731}
    \pt{0.5067318, 0.6974565, 0.5067318}
    \pt{-0.506731, 0.6974565, 0.5067318}
    \pt{-0.0, -0.862103, 0.5067318}
    \pt{-0.506731, -0.697456, -0.506731}
    \pt{0.5067318, -0.697456, -0.506731}
    \pt{0.8199093, -0.266404, 0.5067318}
    \pt{0.8199093, 0.2664047, -0.506731}
\end{tikzpicture}\vspace{1pt} \\

PC(5,3,1) & AC(5,2,1) \\

\end{tabular}
    
    \caption{Examples of prismatic and antiprismatic stephanoids along with the labelings of their vertices. One face of each polyhedron is highlighted along with all other edges, and the main axis of symmetry of each polyhedron is indicated in blue.}
    \label{fig:stephanoidexamples}
\end{figure}

These definitions are equivalent to those in \cite{hollowfaces}, where it is shown that these all generate valid prismatic noble polyhedra satisfying the requirements given in Section 2. If $n,p,q$ share a common factor, then this process generates a compound of smaller stephanoids.

\subsection{Completeness of enumeration}


\begin{definition}
    The \textit{main axis of symmetry} of a prismatic symmetry group $*22n, 2{*}n, 22n,n*,$ or $n\times$ is an axis of order-$n$ rotational symmetry. This axis is unique if $n > 2$.
\end{definition}

Using this definition, all prismatic orbit types have a unique main axis of symmetry other than A2, sA2, and tP2 (in these cases, there are three possible main axes of symmetry).

An important property of this axis is that scaling an orbit $V$ along the main axis of symmetry (even when it is not unique) simply multiplies the outputs of the volume configuration $\operatorname{Conf}_V$ by a constant factor, and the transformed version of $V$ is still within the same orbit type. As a result, any prismatic noble polyhedron $\mathcal{P}$ is part of an infinite continuum of prismatic noble polyhedra which differ only by the amount of scaling along the main axis of symmetry. Thus without loss of generality we may assume that all prismatic noble polyhedra are of unit height.

If we let $V$ be a member of a prismatic orbit type, then the set $P(V)$ of planes (as in Definition \ref{def:planes}) may be partitioned as follows:
\begin{itemize}
    \item 2 \textit{base planes} which each contain half the points in the orbit. These are the only planes which are perpendicular to the main axis of symmetry, and they only exist when the main axis of symmetry is unique.
    \item Planes containing two points from one of the base planes, and one point from the other.
    \item A plane containing two points from both base planes.
\end{itemize}
Notably, only the base planes may contain more than 4 points (if a plane intersected a base plane at at least three points, it would have the same affine hull as the base plane and so it must be the same plane). Because there are only two base planes, they trivially cannot contain the face of a noble polyhedron and therefore a prismatic noble polyhedron has faces with either 3 or 4 sides.

\begin{theorem}
    The dual $\mathcal{Q}$ of a prismatic noble polyhedron $\mathcal{P}$ is a prismatic noble polyhedron.
\end{theorem}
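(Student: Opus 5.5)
The plan is to reduce the statement to Theorem \ref{thm:nobledual}. By that theorem, once we know that the dual $\mathcal{Q}$ of a prismatic noble polyhedron $\mathcal{P}$ is a (nondegenerate) polyhedron, it is automatically noble. So the work lies in verifying the four conditions of Definition \ref{def:nondegenerate} for $\mathcal{Q}$, and then checking that $\Gamma(\mathcal{Q})$ is prismatic. The abstract structure causes no difficulty: the reversed order of an abstract polyhedron is again an abstract polyhedron, since sections are simply reversed. Finiteness is immediate.

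The main tool is pole--polar duality with respect to the unit sphere about the circumcenter $o$ (placed at the origin). Write a face plane of $\mathcal{P}$ as $\{x : x\cdot n = d\}$ with $|n|=1$. Its dual vertex is then $n/d$, which requires $d\neq 0$.

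\emph{Step 1: $d\neq 0$.} I would first show that no face plane of a prismatic noble polyhedron passes through $o$. I expect this to be the main obstacle. I would attack it using the classification of planes in $P(V)$ given just above the statement. Such a plane contains either two points from one base plane and one from the other, or two from each. A plane through $o$ meets the circumsphere in a great circle, so I would argue case by case over the orbit types of Table \ref{table:prismatictypes}. The aim is to show that a face lying on a great circle forces either a degenerate (non-faithful) configuration or coplanarity of adjacent faces, for instance through the central symmetry or the $2$-fold axes of the group. If a genuine exception survives, it has to be excluded explicitly.

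\emph{Step 2: planarity.} If the vertex $v$ lies on a face plane with normal data $(n,d)$, then $v\cdot (n/d)=1$. Hence all dual vertices coming from faces around $v$ lie in the polar plane $\{x : x\cdot v = 1\}$, so each face of $\mathcal{Q}$ is planar.

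\emph{Step 3: no two adjacent faces of $\mathcal{Q}$ are coplanar.} Two faces of $\mathcal{Q}$ sharing an edge correspond to two distinct vertices $v,w$ of $\mathcal{P}$. Their polar planes are distinct, since the polar plane determines $v$ uniquely.

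\emph{Step 4: faithfulness.} Distinct faces of $\mathcal{P}$ must have distinct face planes. A plane not equal to a base plane contains at most $4$ points of $V$, and faces have $3$ or $4$ vertices. So I would need to rule out two faces of $\mathcal{P}$ sharing a plane, either two triangles within a common $4$-point plane or two quadrilaterals on the same $4$ points. I would do this with the adjacency-graph argument of Lemma \ref{thm:polyfromcycle} together with face transitivity: the stabilizer of such a plane would have to swap the two faces, and one then checks the resulting edge incidences. Distinct edges and faces of $\mathcal{Q}$ having distinct vertex sets follows from the corresponding statement for $\mathcal{P}$ together with injectivity.

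\emph{Step 5: the symmetry group.} The proof of Theorem \ref{thm:nobledual} gives $\Gamma(\mathcal{P})\subseteq\Gamma(\mathcal{Q})$. Since duality is an involution, the same argument gives the reverse inclusion. Hence $\Gamma(\mathcal{Q})=\Gamma(\mathcal{P})$ is prismatic, and $\mathcal{Q}$ is a prismatic noble polyhedron.
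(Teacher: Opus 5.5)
Your outline is the paper's. You reduce to Theorem~\ref{thm:nobledual}, get planarity of $\mathcal{Q}$ from polar planes (Step~2), and get injectivity of the dual realization from ``distinct faces of $\mathcal{P}$ have distinct face planes''. Step~3 comes from distinct vertices having distinct polar planes, and Step~5 makes the paper's ``duality preserves symmetry'' explicit.

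The gap is that the two steps you single out as the real work, Steps~1 and~4, are left as plans with an open outcome. The methods you propose for them are also unnecessary.

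Step~1 is a fair worry: the paper's proof of this theorem takes it for granted, though the same argument appears inside the proof of Lemma~\ref{thm:prismaticcondition}. It needs no orbit-type analysis:
\begin{itemize}
\item $\Gamma(\mathcal{P})$ fixes $o$ and is transitive on faces, so if one face plane contains $o$ then all do.
\item Some edge $e$ of a face $F$ spans a line missing $o$. Otherwise every edge of $F$ would be a diameter, and each vertex of $F$ would have both its neighbours in $F$ equal to its antipode.
\item Both faces at $e$ then have face plane $\operatorname{aff}(e\cup\{o\})$, contradicting Definition~\ref{def:nondegenerate}. So no exception can survive.
\end{itemize}
Step~4 is pure counting, which is the paper's argument:
\begin{itemize}
\item A face plane is not a base plane, so it contains at most $4$ points of $V$. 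Two distinct faces in it, each with at least $3$ vertices, share at least $2$ vertices.
\item Two triangles sharing two vertices share the edge through them, since faithfulness of $\mathcal{P}$ makes that edge unique. They are then coplanar faces with a common edge, which is forbidden.
\item Two quadrilaterals would have the same four vertices, which faithfulness of $\mathcal{P}$ forbids outright.
\end{itemize}
No stabilizer or adjacency-graph argument is needed.

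There are also two smaller holes, which the paper leaves implicit as well.

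First, in Step~3 the polar plane of $v$ is the face plane of the dual face only if the dual vertices at $v$ are not collinear. Collinear poles would put the face planes at $v$ in one pencil. These planes all pass through $v$ and are pairwise distinct (by Step~4), so they cannot be parallel and would share a line $L$. But two face planes at $v$ with a common edge from $v$ to $w$ are distinct and meet exactly in the line through $v$ and $w$. Hence $L$ would be the line through $v$ and $w$ for every neighbour $w$ of $v$. That is impossible, since three distinct points of a sphere are never collinear.

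Second, ``no two faces or edges of $\mathcal{Q}$ have the same vertex set'' is not the corresponding statement for $\mathcal{P}$. For faces of $\mathcal{Q}$ it says that distinct vertices of $\mathcal{P}$ lie in distinct sets of faces, which follows from the distinct polar planes once non-collinearity is known. For edges, suppose two distinct edges of $\mathcal{P}$ lie in the same two faces. Then both lie on the line where those two distinct face planes meet. That line meets the circumsphere in at most two points, so the two edges would have equal vertex sets, contradicting faithfulness of $\mathcal{P}$.
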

\begin{proof}
    Because duality preserves symmetry and the first and third conditions of Definition \ref{def:nondegenerate} are trivially satisfied, we only need to prove that the realization is faithful and no two coplanar faces of $\mathcal{Q}$ share an edge. If the realization was not faithful, then two vertices of $\mathcal{Q}$ must realize to the same point in $\mathbb{R}^3$, which occurs exactly when two faces of $\mathcal{P}$ are coplanar. In order for the faces not to share an edge, the plane containing these two faces must contain 
 at least 5 vertices, which is impossible. Similarly, if $\mathcal{Q}$ had coplanar faces, then the realization of the dual $\mathcal{P}$ would have coinciding vertices and therefore $\mathcal{P}$ would not be faithful.
\end{proof}

It thus follows that a prismatic noble polyhedron will either have 3 or 4 faces to a vertex as its dual must have 3 or 4 sides to a face. This fact is useful in combination with the following:

\begin{lemma}
    Given a noble polyhedron $\mathcal{P}$ composed of $v$ vertices such that each face has $k$ edges and the order of the stabilizer of a face is $\varphi$, the number of faces $N$ around a vertex is given by

    \[N = \frac{|\Gamma(\mathcal{P})|k}{\varphi v} \ge 3.\]
\label{thm:prismaticformula}
\end{lemma}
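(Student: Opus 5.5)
The formula follows from a double count of vertex–face incidences, with the face count computed by orbit–stabilizer. The inequality $N\ge 3$ is a separate, purely combinatorial consequence of the abstract polyhedron axioms.

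\emph{Step 1: count the faces.} By Lemma \ref{thm:polygenerateable} we may write $\mathcal{P}=G(F)$ with $G=\Gamma(\mathcal{P})$ and $F$ any face. Since $\mathcal{P}$ is face transitive, the set of faces is the single orbit $G\cdot F$. The orbit–stabilizer theorem then gives $|\mathcal{F}|=|\Gamma(\mathcal{P})|/\varphi$, where $\varphi=|\operatorname{Stab}(F)|$. By vertex transitivity all stabilizers of faces are conjugate, so $\varphi$ does not depend on the choice of $F$.

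\emph{Step 2: double count incidences.} Count the pairs $(x,F')$ with $x$ a vertex and $F'$ a face containing $x$. Each face is a $k$-gon, and an abstract $k$-gon has exactly $k$ vertices, so there are $k|\mathcal{F}|$ such pairs. By vertex transitivity every vertex lies in the same number $N$ of faces, so there are also $vN$ such pairs. Equating the two counts and substituting Step 1 gives $N=|\Gamma(\mathcal{P})|k/(\varphi v)$.

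\emph{Step 3: show $N\ge 3$.} This is the only step that is not pure bookkeeping. The vertex figure at $x$ is the section from $x$ to the maximum element, which is an abstract polygon. Its vertices are the edges at $x$ and its edges are the faces at $x$, so $N$ equals the number of sides of this polygon. It therefore suffices to show that an abstract polygon has at least $3$ sides. The diamond condition (every rank-1 section has exactly 4 elements) makes each vertex of the polygon lie on exactly two edges, and connectivity then makes the polygon a single cycle. A cycle has at least two edges, so the question is whether a $2$-gon can occur.

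A $2$-gon as a vertex figure would mean two faces both containing the same two edges through $x$. I would rule this out geometrically. Those two faces share the three non-collinear vertices $x$ and the other endpoints of the two edges, so they lie in the same plane. They also share an edge, so they are two coplanar faces sharing an edge, which contradicts the fourth condition of Definition \ref{def:nondegenerate}. Faithfulness of the realization guarantees that these three points are distinct, and they are non-collinear because all vertices lie on the circumsphere. Alternatively, the same contradiction arises from faithfulness, since two distinct edges would then have the same vertex set. Hence $N\ge 3$.
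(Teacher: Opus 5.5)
Your argument is correct and is essentially the paper's: orbit--stabilizer gives $|\Gamma(\mathcal{P})|/\varphi$ faces, and the paper's picture of ``$fk$ isolated face-vertices merging in groups of $N$'' is exactly your incidence double count. Your vertex-figure argument (a $2$-gon vertex figure would give two distinct faces sharing an edge and three non-collinear points on the circumsphere, violating nondegeneracy) correctly fills in the $N\ge 3$ step that the paper only asserts.

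There are three minor slips. First, conjugacy of face stabilizers follows from face transitivity, not vertex transitivity. Second, $N\ge 3$ is not a consequence of the abstract axioms alone, since dihedra have digon vertex figures; this is exactly why your geometric step is needed. Third, your ``alternative'' via edges is misstated: a digon vertex figure at every vertex forces two faces with the same vertex set, not two edges.
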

\begin{proof}
    Let $f$ be the number of faces of $\mathcal{P}$. Then $f = \frac{|\Gamma(\mathcal{P})|}{\varphi}$ by the definition of stabilizers as noble polyhedra are face-transitive. If we imagine that the faces of $\mathcal{P}$ were disconnected and isolated, we would obtain a figure containing $fk$ vertices. When joined together into a polyhedron, the vertices merge together in groups of $N$ and so the number of vertices in our polyhedron is $v = \frac{fk}{N}$. From this we find that $N = \frac{fk}{v}$, which using the definition of $f$ is equivalent to the first part of our claim. As we cannot have 2 faces or less at a vertex in a polyhedron, $N \ge 3$.
\end{proof}
\begin{corollary}
    If $\mathcal{P}$ is a noble polyhedron, then $N = \frac{k}{\varphi}$ except when $\mathcal{P}$ is in a Pn orbit type under $*22n$ symmetry or in an An orbit type under $2{*}n$ symmetry. In these cases, $N=2\frac{k}{\varphi}$.
\label{thm:prismaticN}
\end{corollary}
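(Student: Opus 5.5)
Since the polyhedron is noble, Lemma \ref{thm:polygenerateable} and Lemma \ref{thm:prismaticformula} give $N = \frac{|\Gamma(\mathcal{P})|k}{\varphi v}$. It is enough to show that $|\Gamma(\mathcal{P})| = v$ in every case except the two listed, and that $|\Gamma(\mathcal{P})| = 2v$ in those two. As in the remark after Definition \ref{def:canonicalmapping}, we may take $\Gamma(\mathcal{P})$ to be a prismatic group that acts transitively on an orbit from Table \ref{table:prismatictypes}. Vertex-transitivity and the orbit--stabilizer theorem give $|\Gamma(\mathcal{P})| = v\cdot|\mathrm{Stab}(u)|$ for a vertex $u$. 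The claim therefore reduces to computing the vertex stabilizer in each (orbit type, group) pair listed in Table \ref{table:prismatictypes}.

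The computation goes through the table. The groups $*22n$, $2{*}n$ and $22n$ have order $4n$; the groups $n*$ and $n\times$ have order $2n$. For the types tP$n$, tA$n$ and sA$n$ the generating point has $a,b>0$ (and $c=1$). It therefore lies in the interior of the fundamental domain of the relevant group, so its stabilizer is trivial and $|\Gamma| = v$. This matches the vertex counts: $4n$ points for the groups of order $4n$, and $2n$ points for $22n$ acting on sA$n$.

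For P$n$ the orbit has $2n$ points. Under $n*$ the order is $2n = v$, so the stabilizer is trivial. Under $22n$ the order is $4n$, but the orbit has only $2n$ points; this is a contradiction, so the orbit must be read as the $22n$-image of a generic point and would have $4n$ points. I therefore expect that $22n$ does not really act on P$n$ with a nontrivial stabilizer. One has to check how the table's "transitive under" column interacts with $V_{*22n}(a,1,0)$. The point has $c=0$, so it lies on the $r_3$ mirror, and its $*22n$-stabilizer is $\{1,r_3\}$ of order $2$. This gives $|\Gamma| = 2v$ exactly for $*22n$. Under $22n$, which contains no reflections, the stabilizer is trivial.

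The same check applies to A$n$. The point $V_{22n}(a,1,0)$ is fixed in $2{*}n$ by the reflection through a vertical mirror, so the stabilizer has order $2$. In $n\times$ and $22n$ the stabilizer is trivial, because the only candidate non-identity elements fixing $u$ would be a reflection or a half-turn, and those groups contain no reflection through $u$ (and, for $22n$, no half-turn axis through $u$, since $u$ does not lie in the horizontal plane). Substituting $|\Gamma| = v$ or $2v$ into Lemma \ref{thm:prismaticformula} gives $N = k/\varphi$ or $N = 2k/\varphi$.

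The main obstacle is making the stabilizer claims rigorous for every listed group, not just the reflection groups. The approach is to use the explicit matrices of Definition \ref{def:22Nmatrices} to identify which elements of each subgroup fix the generating point. An element of a point group fixing $u \neq 0$ is either the identity, a rotation about the axis through $u$, or a reflection in a plane containing $u$. Since the main axis carries no vertices (the orbit is not coplanar and has unit height), only reflections in vertical mirrors or half-turns about horizontal axes through $u$ can occur. Each of these is ruled out or confirmed case by case. The remaining care is the degenerate small-$n$ types (A2, sA2, tP2), where the main axis is not unique, but the orbit--stabilizer count is unaffected.
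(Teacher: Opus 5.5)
Your strategy is sound and is essentially the paper's proof. The paper reads $v$ off Table \ref{table:prismatictypes} and $|\Gamma(\mathcal{P})|$ off Table \ref{table:prismaticorders} for each (orbit type, group) pair and checks the ratio. Your orbit--stabilizer computation is the same case check, with a geometric reason attached for why $|\Gamma(\mathcal{P})|/v$ is $1$ or $2$. Your final stabilizer orders are all correct, so the conclusion stands.

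There is one concrete error you need to fix. $22n$ is the rotation subgroup of $*22n$ (the chiral dihedral group), so its order is $2n$, not $4n$; Table \ref{table:prismaticorders} also records this. This misstatement causes two problems:
\begin{itemize}
\item It produces the spurious ``contradiction'' in your P$n$ paragraph. The sentence ``I therefore expect that $22n$ does not really act on P$n$ with a nontrivial stabilizer'' is speculation, not a proof step, and should be deleted. With $|22n| = 2n$ there is nothing to explain: the trivial stabilizer gives $|22n| = 2n = v$ directly.
\item It makes your sA$n$ consistency check self-contradictory as written. A trivial stabilizer together with $|22n| = 4n$ would force $4n$ vertices, not $2n$.
\end{itemize}

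You can also settle your ``main obstacle'' in one line, with no case-by-case matrix checking. Let $W$ be the ambient reflection group and let $u$ lie in its closed fundamental chamber. Then $\mathrm{Stab}_W(u)$ is generated by the reflections in the walls containing $u$, and for any subgroup $H$ one has $\mathrm{Stab}_H(u) = H \cap \mathrm{Stab}_W(u)$. For sA$n$, tP$n$ and tA$n$ the point $u$ is interior, so this is trivial. For P$n$ and A$n$ it equals $H \cap \{1, r\}$, where $r$ is the single wall reflection fixing $u$. Among the listed groups, only $*22n$ (for P$n$) and $2{*}n$ (for A$n$) contain $r$.
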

\begin{proof}
    This may be confirmed manually by simply checking all possibilities for orbit types and the point groups that act transitively on them. Tables \ref{table:prismatictypes} and \ref{table:prismaticorders} (seen below) give the necessary information to do this.
\end{proof}
\vspace{-0.7cm}
\begin{table}[H]
    \caption{Orders of the prismatic point groups which act transitively on an orbit type.}
    \vspace{0.05cm}
  \centering
  \begin{tabular}{ | l | l | }
    \hline
    Orbifold symbol & Order \\ \hline \hline
    $n*$      & $2n$ \\ \hline
    $n\times$ & $2n$ \\ \hline
    $22n$     & $2n$ \\ \hline
    $2{*}n$     & $4n$ \\ \hline
    $*22n$    & $4n$ \\ \hline
  \end{tabular}
\label{table:prismaticorders}
\end{table}

This corollary is very powerful in combination with the previous fact that $N,k$ are either 3 or 4. The case $k = 3$ is simple, and by duality this also enumerates the prismatic noble polyhedra with $N = 3$. We show this below.

\begin{theorem}
    The only prismatic noble polyhedra with triangular faces are the family of disphenoids.\label{thm:prismatictrianglefaced}
\end{theorem}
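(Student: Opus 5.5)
Set $k=3$. Corollary \ref{thm:prismaticN} then gives $N = 3/\varphi$, or $N = 6/\varphi$ in the two exceptional cases (P$n$ under $*22n$, A$n$ under $2{*}n$). Lemma \ref{thm:prismaticformula} forces $N \ge 3$, and we already know $N \in \{3,4\}$.

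\textbf{Step 1: restrict $\varphi$ and $N$.} In the non-exceptional case, $N=3/\varphi \ge 3$ forces $\varphi = 1$ and $N=3$. In the exceptional cases, $6/\varphi \in\{3,4\}$ forces $\varphi=2$ and $N=3$. So in every case $N=3$: each vertex lies in exactly three triangles.

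\textbf{Step 2: count vertices and faces.} The polyhedron has $E = 3F/2$ edges. Since $3V = 3F$, we get $V=F$, and Euler's relation gives $V - E + F = 2 - 2g$. This yields $2V - 3V/2 = V/2 = 2-2g$, so $V \le 4$. The only option is $V = 4$ (with $g=0$), which is the tetrahedron.

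Euler's relation is not quite free for a self-intersecting abstract polyhedron. Instead I would argue combinatorially: the vertex figure is a triangle, so each vertex $v$ is joined by edges to exactly three other vertices $u_1,u_2,u_3$. The three faces at $v$ are $vu_1u_2$, $vu_2u_3$ and $vu_3u_1$, so the edges $u_1u_2$, $u_2u_3$, $u_3u_1$ are all present. Consider $u_1$: its three faces include $vu_1u_2$ and $vu_1u_3$, and its neighbours include $v$, $u_2$ and $u_3$. Hence its neighbours are exactly $\{v,u_2,u_3\}$, and its third face must be $u_1u_2u_3$. The set $\{v,u_1,u_2,u_3\}$ is therefore closed under adjacency. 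Connectivity of the abstract polyhedron then shows that the whole poset is the abstract tetrahedron. This local-closure argument is the main point needing care, and I would present it in place of the genus argument.

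\textbf{Step 3: realization.} A noble realization of the abstract tetrahedron has four vertices in general position (a nondegenerate face plane cannot contain a fourth vertex). Face-transitivity makes all four faces congruent, so it is a disphenoid by definition. Conversely, every disphenoid is noble, as stated in Section 4.1.
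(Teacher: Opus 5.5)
Your proposal is correct and follows the paper's proof. The paper's argument has the same shape: the case split $N = k/\varphi$ versus $N = 2k/\varphi$, with $N \in \{3,4\}$ coming from duality, forces $N = k = 3$, and the polyhedron must then be an abstract tetrahedron realized as a disphenoid. The one difference is that the paper simply asserts that the tetrahedron is the only abstract polyhedron with three triangles at each vertex, whereas your local-closure argument combined with connectivity of the poset actually proves it. Your Euler-characteristic version would also have been valid, since the abstract structure is a closed surface whatever self-intersections the realization has.
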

\begin{proof}
    In the case where $N = \frac{3}{\varphi} \ge 3$, the only possible value for $\varphi$ is 1 and so $N = 3$. In the case where $N = \frac{6}{\varphi} \ge 3$, again the only possibilities for $\varphi$ are 1 or 2. However if $\varphi = 1$ then $N = 6$, which is impossible as the dual of this polyhedron would be prismatic and have hexagonal faces.  
    
    Therefore in either case we have $k = N = 3$, and because the tetrahedron is the only abstract polyhedron with 3 triangles at a vertex a prismatic noble polyhedron with triangular faces must have the abstract structure of a tetrahedron. If $\varphi = 1$, faces will have no symmetry which results in a rhombic disphenoid. If $\varphi = 2$ faces will have mirror symmetry which results in a tetragonal disphenoid.
\end{proof}

The only remaining case is when $N = k = 4$, and we want to show that a prismatic noble polyhedron of this form must be a stephanoid. Without loss of generality, we may assume that these polyhedra have a unique main axis of symmetry. The only cases where the choice of the main axis of symmetry would not be unique are the orbits of A2, sA2, and tP2. Of these, only tP2 has a plane containing 4 points, and these planes either intersect the origin (which is impossible) or lie on the exterior of the polyhedron, which would result in a convex noble polyhedron - however the convex hull of tP2 is never noble under $*222$ symmetry; it will have the symmetries of a rectangular prism, which has three orbits of faces.

\begin{theorem}
    Let $\mathcal{P}$ be a prismatic noble polyhedron with quadrilateral faces. Then no edge of $\mathcal{P}$ may be contained within a base plane.
    \label{thm:noedgesinbaseplane}
\end{theorem}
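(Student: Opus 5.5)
We argue by contradiction: suppose some edge $e=\{v_1,v_2\}$ of $\mathcal{P}$ has both endpoints in one base plane $B$. By the discussion preceding the statement, we may assume $\mathcal{P}$ has a unique main axis of symmetry, so the base planes exist and are perpendicular to that axis. Every face of $\mathcal{P}$ is then a quadrilateral lying in a plane of vertices that contains exactly four points of $V$, and such a plane is of the type ``two points from each base plane''. (A plane of the type ``two from one base plane, one from the other'' contains only three points, hence cannot carry a quadrilateral.) So each face $F$ has exactly two vertices in $B$ and two in the other base plane $B'$.

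The main step is to follow the two faces $F_1,F_2$ containing $e$. In each $F_i$, the vertices in $B$ are exactly $v_1,v_2$. Since $e$ is an edge of the quadrilateral $F_i$, the cyclic order of $F_i$ contains $v_1,v_2$ consecutively, so the other two vertices $w_i,w_i'$ lie in $B'$ and are also consecutive. Hence $\{w_i,w_i'\}$ is an edge as well. It follows that the face planes of $F_1$ and $F_2$ meet $B$ in the same line $\ell=\operatorname{aff}\{v_1,v_2\}$.

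Now use face transitivity together with the symmetry group $G=\Gamma(\mathcal{P})$ to count. Each face contributes exactly one edge inside $B$ or $B'$ and exactly two ``vertical'' edges joining $B$ to $B'$. Every edge lies in two faces, and every face has one horizontal edge; so a quarter of the face--edge incidences are horizontal and half are vertical. Since $G$ acts transitively on faces, and any symmetry maps base planes to base planes, the edges split into orbits according to their type.

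Next, look at the vertex figure at $v_1$. It is a 4-cycle of faces, consistent with $N=4$, and $N=k=4$ holds by Corollary \ref{thm:prismaticN} and duality. Each of the four faces at $v_1$ contains exactly one edge at $v_1$ lying in $B$, except that a face whose horizontal edge is the $B'$-edge contains none. Around the cycle, consecutive faces share an edge at $v_1$, so the edges at $v_1$ alternate in a pattern that forces the number of horizontal edges at $v_1$ to equal the number of vertical ones. On the other hand, the per-face count (each face has two vertical edges, two vertices on $B$, and one horizontal edge) forces the vertical edges at a vertex to outnumber the horizontal ones. This parity or counting mismatch is the intended contradiction.

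I expect this counting step to be the main obstacle, since it must be carried out carefully. If it does not close, the fallback is geometric. The faces $F_1$ and $F_2$ both contain the line $\ell$, and their $B'$-edges $\{w_1,w_1'\}$ and $\{w_2,w_2'\}$ are chords of the circle $B'\cap V$ parallel to $\ell$. The point group of the orbit (Table \ref{table:prismatictypes}) maps chords to chords, so one can show that the $B'$-edges must be the same chord, making $F_1=F_2$ (a faithfulness violation) or making them coplanar, which contradicts Definition \ref{def:nondegenerate}. Concretely, a single line $\ell$ in $B$ admits at most one parallel chord in $B'$ with the right symmetric position relative to the group element swapping $F_1$ and $F_2$. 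That element must fix $e$, hence fix $\ell$, and a prismatic symmetry fixing a horizontal chord and preserving the base planes acts on $B'$ so that it fixes the parallel chord too, which forces $F_1=F_2$.
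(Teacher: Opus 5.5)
Your first two steps are fine, but the main counting argument fails because the count itself is wrong. You showed that a face through $e$ has two horizontal edges, $e\subset B$ and $\{w_i,w_i'\}\subset B'$, plus two vertical ones, and by face-transitivity every face has this shape. So the claim that ``every face has one horizontal edge'' is false; your fractions (a quarter horizontal, a half vertical) do not even sum to one. Likewise, no face at $v_1$ lacks a $B$-edge at $v_1$. Counted correctly, each face meets each of its vertices in one horizontal and one vertical edge. Around the $4$-cycle at $v_1$ the edges therefore alternate, two of each, which agrees exactly with the global count. The pattern is also combinatorially consistent. With $a_i\in B$, $b_i\in B'$ and $n\ge 3$, the quadrilaterals $a_ia_{i+1}b_{i+1}b_i$ and $a_ia_{i+1}b_{i+k+1}b_{i+k}$ ($0<k<n$) form an abstract toroidal $\{4,4\}$ of exactly this kind. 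So no counting argument can yield the contradiction; it has to come from the geometry of the group action.

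Your fallback is the right idea and is essentially the paper's route: take $\rho\in\Gamma(\mathcal{P})$ with $\rho(F_1)=F_2$ and classify $\rho$. As written, though, it only handles $\rho=\mathrm{id}$ and $\rho$ the reflection in the vertical plane perpendicular to $\ell$, which preserves the face plane of $F_1$. Two cases are missing.

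\emph{First, $\rho(e)=e$ is not forced.} $\rho(e)$ is a horizontal edge of $F_2$, so it may be $\{w_2,w_2'\}$, with $\rho$ swapping $B$ and $B'$. Then $\rho(\{w_1,w_1'\})=e$, so in both $F_1$ and $F_2$ the two horizontal edges are equal parallel chords of the two congruent base circles. Hence each face's $B'$-edge is either the vertical translate of $e$ or $-e$; the latter means the face plane passes through the center $o$. Since $\rho$ fixes $o$, either both face planes pass through $o$, or neither does. If both do, both faces lie in $\operatorname{aff}(e\cup\{o\})$, so they are coplanar. If neither does, both $B'$-edges equal the vertical translate of $e$, so $F_1$ and $F_2$ have the same vertex set, violating faithfulness.

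\emph{Second, your claim that a symmetry fixing $e$ and preserving $B$ must fix the parallel chord in $B'$ is false when $\ell$ meets the main axis.} In that case the half-turn about the axis and the reflection in the vertical plane containing $\ell$ (when present in $\Gamma(\mathcal{P})$) fix $e$. But they send the chord of $B'$ at signed offset $t$ from the axis to the one at offset $-t$. So $F_2=\rho(F_1)\neq F_1$, and no contradiction arises yet. The paper closes this case by running the same analysis on the $B'$-edge of $F_1$: it too must meet the axis. The face plane of $F_1$ then contains the axis and hence $o$, and the two faces sharing $e$ are again coplanar, contradicting Definition \ref{def:nondegenerate}.
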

\begin{proof}
    Assume a face $F$ of $\mathcal{P}$ contains an edge $e_1$ within a base plane. Therefore $F$ must also contain a second edge $e_2$ within the opposite base plane. $e_1$ must also be adjacent to a unique second face $F'$ of $\mathcal{P}$, so because $\mathcal{P}$ is noble and therefore face-transitive, there must be a nonidentity symmetry $\rho \in \Gamma(\mathcal{P})$ such that $\rho(F)=F'$. Then we have either $\rho(e_1) = e_1$ or $\rho(e_1)=e_2$. In the latter case, $\rho(e_1)$ would equal some transformation of $e_2$ under a symmetry of $\Gamma(\mathcal{P})$, causing $e_1$ and $e_2$ to have the same length. This would imply that either all faces intersect the origin, or all face planes are perpendicular to the base planes; in either case, this would cause $F'$ and $F$ to coincide.

    Therefore $\rho(e_1)=e_1$. It follows that $\rho$ must either swap or preserve the vertices of $e_1$ in its transformation. In the latter case, symmetries in $*22n$ which do this only exist when $e_1$ intersects with the main axis of symmetry. In the former case, $e_1$ either intersects the main axis of symmetry or $\rho$ is a reflection, however when $\rho$ is a reflection it must preserve the face plane of $F$ - this causes $F'$ and $F$ to be adjacent faces in the same face plane, violating Definition \ref{def:nondegenerate}. Thus in either case $e_1$ must intersect the main axis of symmetry, and by extension the same logic must apply for $e_2$. However, in this case $F$ must intersect the origin, which is impossible.
\end{proof}

\begin{lemma}
    Let $\mathcal{P}$ be a noble polyhedron in the P$n$ orbit type with $*22n$ symmetry. If it contains a face that visits the vertices $a_1,b_i,a_j,b_k$ in order for some $i,j,k$, and that face has a mirror symmetry containing the main axis of symmetry, it is a stephanoid.
\label{thm:prismaticcondition}
\end{lemma}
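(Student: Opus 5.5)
The plan is to use the mirror to pin down the indices $i,j,k$, and then match the resulting quadrilateral against Definition \ref{def:prismaticstephanoid}.

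\textbf{Setup.} Place the P$n$ orbit at unit height. The points $a_m$ lie on the top regular $n$-gon, the points $b_m$ on the bottom one, and $b_m$ sits directly below $a_m$, with indices taken mod $n$. A mirror $\sigma\in *22n$ containing the main axis is a vertical reflection of the form $a_m\mapsto a_{c-m}$, $b_m\mapsto b_{c-m}$ for some fixed $c$. In particular it never exchanges the two base planes.

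\textbf{Step 1: how the mirror acts on the face.} Let $F=(a_1,b_i,a_j,b_k)$ and let $\sigma$ be a vertical mirror with $\sigma(F)=F$. Then $\sigma$ permutes $\{a_1,a_j\}$ and $\{b_i,b_k\}$, and it induces a symmetry of the $4$-cycle that preserves the $a/b$ alternation.

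\begin{itemize}
\item Suppose $\sigma$ fixes $a_1$ and $a_j$. This forces $j\equiv 1$ or $j-1\equiv n/2$. The first is impossible by faithfulness. If instead $a_1,a_j$ are antipodal, the face plane meets the main axis, and every vertical plane through the axis would then have to contain the origin; I expect this contradicts the fact that a face plane cannot pass through the circumcenter. So I would discard this case, citing the same argument as at the end of Theorem \ref{thm:noedgesinbaseplane}.
\item Otherwise $\sigma$ swaps $a_1\leftrightarrow a_j$, so $c\equiv 1+j$. As a symmetry of the 4-cycle, a map that swaps the two $a$-vertices must either swap the two $b$-vertices or fix both of them.
\end{itemize}

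If $\sigma$ fixes both $b$-vertices, then $b_i$ and $b_k$ both lie on the mirror. A vertical mirror contains at most two points of the bottom $n$-gon, and those two are antipodal, so again the plane passes through the axis. I would rule this out as above, or because the quadrilateral would then be a reflection-symmetric kite lying in the mirror plane itself, making $F$ coplanar with the mirror.

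So $\sigma$ swaps $b_i\leftrightarrow b_k$, which gives $i+k\equiv 1+j$. Writing $j=1+p$ and $i=1+q$, we get $k\equiv 1+p-q$. The face is therefore exactly $a_1,b_{1+q},a_{1+p},b_{1+p-q}$, as in Definition \ref{def:prismaticstephanoid}.

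\textbf{Step 2: the inequalities.} Any face determines $n,p,q$ only up to relabellings:
\begin{itemize}
\item reversing orientation, $p\mapsto n-p$;
\item cyclic relabelling of the face;
\item swapping the roles of $q$ and $p-q$.
\end{itemize}
Using these, I would normalize to $0<p<n$ and $0<2q<p$. The case $2q=p$ gives $b_i=b_k$, and $q=0$ gives a vertical edge $a_1b_1$; I would exclude the latter, or treat it as creating coplanar adjacent faces. The remaining inequality $2p-n<2q$ is, I expect, the main obstacle. My approach would be to use the reflection $p\mapsto n-p$ combined with $q\mapsto$ the complementary index, and check that the conditions $2q\le 2p-n$ correspond either to the same stephanoid written with other parameters or to a degenerate case. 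In the degenerate case the face plane would contain the centre, or adjacent faces would coincide; this can be checked using the fact, from Theorem \ref{thm:noedgesinbaseplane}, that all edges are $a$–$b$ edges.

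\textbf{Conclusion.} Since $\mathcal{P}=G(F)$ with $G=*22n$ by Lemma \ref{thm:polygenerateable}, $\mathcal{P}$ equals $\operatorname{PC}(n,p,q)$ by definition.
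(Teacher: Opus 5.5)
\textbf{Step 1.} This matches the paper. A vertical mirror swapping $a_1\leftrightarrow a_j$ and $b_i\leftrightarrow b_k$ forces $k\equiv 1+j-i$, which gives the face $a_1,b_{1+q},a_{1+p},b_{1+p-q}$. Reversing the cycle then gives $2q<p$.

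You are more careful than the paper in raising the sub-cases where $\sigma$ fixes a pair of vertices, but your reasons for dismissing them do not work. A plane through the top centre of the axis need not contain the origin. And if $\sigma$ fixed $b_i,b_k$ while swapping $a_1,a_j$, the face plane would have to be perpendicular to the mirror, not contained in it.

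A correct dismissal goes as follows. The face plane is $\sigma$-invariant, so it is either the mirror itself or perpendicular to it. If it is the mirror, it contains the centre, which is excluded by the argument at the end of this review. If it is perpendicular and $\sigma$ fixes two vertices in one base plane, the face plane contains a horizontal line lying in the mirror together with the horizontal normal of the mirror. It is therefore horizontal, which is impossible since it meets both base planes.

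\textbf{Step 2: the genuine gap is the inequality $2p-n<2q$.} Every move you list comes either from a symmetry preserving the base planes (rotations and vertical mirrors) or from reversing the cycle. These only produce the parameter pairs $(p,q)$, $(p,p-q)$, $(n-p,n-q)$ and $(n-p,n-p+q)$. Once $0<2q<p<n$, the last two never satisfy $2q'<p'$. So the normalized $p$ is uniquely determined, and no such move, including your proposed ``$p\mapsto n-p$ with the complementary $q$'', can change whether $2p-n<2q$ holds.

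The missing idea is a symmetry of $*22n$ that \emph{exchanges} the base planes. The paper's map $f$ is the half-turn about a horizontal axis, i.e.\ a horizontal mirror composed with a vertical one: $a_m\mapsto b_{2+q-m}$, $b_m\mapsto a_{2+q-m}$.
\begin{itemize}
\item It swaps $a_1\leftrightarrow b_{1+q}$ and carries $F$ to the face $a_1,b_{1+q},a_{1+p'},b_{1+p'-q}$ with $p'=n+2q-p$.
\item Thus $q$ is unchanged, $2q<p'<n$ still holds, and $2p'-n-2q=-(2p-n-2q)$.
\item Hence you may assume $2p-n\le 2q$.
\end{itemize}

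Equality $2p-n=2q$ means $p-q=n/2$, so $b_{1+p-q}$ is the antipode of $a_1$ and the face plane passes through the centre. By face-transitivity every face plane then does. The edge $a_1b_{1+q}$ misses the centre, since $q<n/2$. So the face adjacent to $F$ along that edge has its plane spanned by the edge and the centre, which is the plane of $F$. This contradicts nondegeneracy. With this step you obtain $2p-n<2q<p<n$, and hence $\mathcal{P}=\operatorname{PC}(n,p,q)$.
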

\begin{proof}
    Let $\rho$ be the mirror symmetry of the face. A mirror symmetry containing the main axis of symmetry cannot swap the base planes of $\mathcal{P}$, so we must have $\rho(a_1) = a_j, \rho(b_i)=b_k$. This uniquely determines the plane of symmetry as the plane of points that are an equal distance to $a_1$ and $a_j$ (or equivalently $b_i$ and $b_k$). It may be verified that $\rho(b_p)=b_{1+j-p}$ for all $p$, and therefore $k = 1+j-i$. This is the generating face of $\operatorname{PC}(n,j-1,i-1)$ (if it exists).

    The two cycles $a_1,b_i,a_j,b_k$ and $a_1,b_k,a_j,b_i$ give the same face, so without loss of generality we may assume that $i < k = 1+j-i$, or equivalently $2i-2 < j-1$.

    Consider the map $f$ such that $f(a_p)=b_{i-p+1 \pmod n}$ and $f(b_p)=a_{i-p+1 \pmod n}$. This transformation extends to a symmetry of $*22n$, and it maps the vertices of the face to $a_1, b_i, a_{n+2i-j}, b_{n+i-j+1}$. Let $j'=n+2i-j$. Without loss of generality, we may assume that $2j-n \le 2i$, as if instead $2j-n \ge 2i$, it would follow that $2j'-n=4i-(2j-n) \le 4i-(2i) = 2i$, so application of $f$ gives another face of $\mathcal{P}$ which satisfies the condition. In order to prove the necessary relation $2j-n-2 < 2i-2 < j-1 < n$ (see Definition \ref{def:prismaticstephanoid}), it only remains to show that $2j-n \neq 2i$. If this was the case, then geometrically the face will intersect the origin. However, this is impossible; if we let $e$ be an edge of the chosen face which does not intersect the center of the polyhedron, then the face plane of the one other adjacent face containing $e$ will contain both $e$ and the origin; this results in two faces in the same plane sharing an edge, which is not allowed.

    Thus $\mathcal{P}$ must be the polyhedron $\operatorname{PC}(n,j-1,i-1)$.
\end{proof}

\begin{lemma}
    Let $\mathcal{P}$ be a noble polyhedron in the A$n$ orbit type with $2{*}n$ symmetry. If it contains a face that visits the vertices $a_1,a_{2i},a_{2j+1},a_{2k}$ in order for some integral $i,j,k$, and that face has a mirror symmetry containing the axis of symmetry, it is a stephanoid.
\label{thm:antiprismaticcondition}
\end{lemma}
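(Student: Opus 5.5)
The plan is to mirror the proof of Lemma \ref{thm:prismaticcondition}, now working with the single cyclic labeling $a_1,\dots,a_{2n}$ of the A$n$ orbit. Odd-indexed vertices lie in one base plane and even-indexed vertices in the other; indices are read modulo $2n$.

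\textbf{Step 1: pin down the mirror.} Let $\rho$ be the mirror symmetry of the face that contains the main axis of symmetry. Such a reflection fixes each base plane setwise. It therefore permutes $\{a_1,a_{2j+1}\}$ and $\{a_{2i},a_{2k}\}$, and because it is a nontrivial symmetry of the quadrilateral cycle it must swap the two vertices within each pair. So $\rho(a_1)=a_{2j+1}$ and $\rho(a_{2i})=a_{2k}$.

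The mirrors of $2{*}n$ that contain the axis are exactly the vertical planes through the midpoint direction of $a_1$ and $a_{2j+1}$. Such a plane acts on the labels by $\rho(a_p)=a_{2j+2-p}$; I would verify this directly from the coordinates, as in the $*22n$ case. It follows that $2k=2j+2-2i$, so the face is
\[a_1,\ a_{2i},\ a_{2j+1},\ a_{2j+2-2i}.\]
Setting $p=j$ and $q=2i-1$, which is odd, this is the generating quadrilateral $a_1,a_{1+q},a_{1+2p},a_{1+2p-q}$ of $\operatorname{AC}(n,p,q)$ from Definition \ref{def:antiprismaticstephanoid}.

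\textbf{Step 2: normalize the parameters.} Reversing the cycle swaps $a_{2i}$ and $a_{2k}$, which replaces $q$ by $2p-q$. Using this we may assume $q<p$. We may also take $0<p<n$, since $p\equiv 0$ would make $a_1=a_{2j+1}$, contradicting faithfulness.

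For the lower bound $2p-n<q$ I would imitate the map $f$ of Lemma \ref{thm:prismaticcondition}. Take the half-turn or rotatory element of $2{*}n$ that acts by $a_s\mapsto a_{c-s}$ with $c=2i+1$. It sends the face to one of the form $a_1,a_{2i},a_{2j'+1},\dots$ with $2p'=2n+q-2p+q$ (mod $2n$). One then checks that if $2p-n\ge q$ the new parameters satisfy the reverse inequality, so we may pass to the image face.

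\textbf{Step 3: exclude the boundary case.} The remaining case is $2p-n=q$, equivalently $p'=p$. Here the face is centrally symmetric and its plane passes through the origin. As in the earlier lemma, the neighboring face across an edge not meeting the axis would then be coplanar with it, contradicting Definition \ref{def:nondegenerate}. With strict inequalities $2p-n<q<p<n$ and $q$ odd, $\mathcal{P}=\operatorname{AC}(n,p,q)$.

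I expect the main obstacle to be Step 2: choosing the right element of $2{*}n$ so that the relabeling is consistent with parities, since it must map odd indices to odd indices in the right way, and then checking that the inequality flips exactly as in the prismatic case.
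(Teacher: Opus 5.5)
Your proposal is essentially the paper's proof: the axial mirror forces $\rho(a_s)=a_{2j+2-s}$ and hence the generating face of $\operatorname{AC}(n,j,2i-1)$, and reversing the cycle is used to get $q<p$. Your half-turn $a_s\mapsto a_{2i+1-s}$ gives the same image face as the paper's rotoreflection $a_s\mapsto a_{s-1-2j+2i}$ (they differ by the face's own mirror), so $p\mapsto n+q-p$ flips the lower bound. Your explicit exclusion of the fixed case $2p-n=q$ (which forces $n$ odd and a face plane through the origin) is more careful than the paper, whose inequality uses the unreduced negative $j'=2i-j-1$; one imprecision you share with the paper is that reversal alone need not give $0<q<p$ (e.g.\ $n=7$ and the face $a_1,a_{14},a_5,a_6$, where $p=2$ and $q\in\{13,5\}$), which is repaired by also using the mirror $a_s\mapsto a_{2-s}$ through $a_1$, sending $(p,q)$ to $(n-p,2n-q)$.
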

\begin{proof}
    Let $\rho$ be the mirror symmetry of the face. $\rho$ cannot swap the base planes of the vertices of $\mathcal{P}$, so if $a_q = \rho(a_p)$ both $p$ and $q$ have the same parity. Thus $\rho(a_1)=a_{2j+1}$ and $\rho(a_{2i})=a_{2k}$, and this uniquely implies $\rho(a_p)=a_{2+2j-p}$ in general; furthermore, we obtain the identity $k=1+j-i$ from this. This gives the generating face of $\operatorname{AC}(n,j,2i-1)$ (if it exists). $2i-1$ will always be odd, so it only remains to verify that without loss of generality we may assume $2j-n < 2i-1 < j < n$ as stated in Definition \ref{def:antiprismaticstephanoid}.

    $j < n$ is trivially true as otherwise the chosen face would visit $a_1$ twice. As in the previous lemma, the two cycles $a_1,a_{2i},a_{2j+1},a_{2k}$ and $a_1,a_{2k},a_{2j+1},a_{2i}$ give the same polygon, so without loss of generality we may assume $2i < 2k = 2+2j-2i$, or equivalently $2i - 1 < j$.

    Consider the map $f$ such that $f(a_p) = a_{p-1-2j+2i}$. This transformation is a symmetry of $2{*}n$, and it maps the vertices of the face to $a_1,a_{2i},a_{4i-2j-1},a_{2i-2j}$. Let $j'=2i-j-1$. If $2j-n\ge 2i-1$,  then $2j'-n = 4i-2-(2j-n)-2n \le 2i-1-2n < 2i-1$. Thus without loss of generality we may assume $2j-n < 2i-1$, as otherwise we may apply $f$ to obtain another face of $\mathcal{P}$ which does satisfy the condition.

    Thus, $\mathcal{P}$ must be the polyhedron $\operatorname{AC}(n,j,2i-1)$.
\end{proof}

\begin{theorem}
    Let $\mathcal{P}$ be a noble polyhedron, $F$ be a face of $\mathcal{P}$, and $\rho$ be a symmetry of the polygon $F$. Then $\rho$ is also a symmetry of $\mathcal{P}$.
\label{thm:facesymmetryextends}
\end{theorem}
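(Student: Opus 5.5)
The plan is to extend $\rho$ to an isometry $\tilde\rho$ of $\mathbb{R}^3$, and then show that $\tilde\rho$ maps the face set of $\mathcal{P}$ onto itself. Place the circumcenter $o$ at the origin. The vertices of $F$ lie on the circle cut out of the circumsphere by the face plane $\operatorname{aff} F$. The isometry $\rho$ of $\operatorname{aff} F$ permutes these vertices, so it fixes the centre $c$ of that circle, which is the projection of $o$ onto $\operatorname{aff} F$. We therefore extend $\rho$ to the isometry $\tilde\rho$ that fixes the line $oc$ pointwise and acts as $\rho$ on the planes parallel to $\operatorname{aff} F$. By construction $\tilde\rho$ fixes $o$ and maps $F$ to itself, and it acts on the vertex-cycle of $F$ as a dihedral symmetry. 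It then suffices to show that $\tilde\rho(V)=V$ and that $\tilde\rho$ maps faces of $\mathcal{P}$ to faces of $\mathcal{P}$. In that case $\tilde\rho$ induces an automorphism of the abstract polyhedron that is realized by an isometry, so $\rho\in\Gamma(\mathcal{P})$ by the definition of symmetry in Section 2.

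The core step is local propagation across edges. For each edge $e$ of $F$ there is a unique second face $F_e$ containing $e$. By face transitivity (Lemma \ref{thm:polygenerateable}), $F_e=g_e(F)$ for some $g_e\in\Gamma(\mathcal{P})$. I would aim to show that $\tilde\rho(F_e)=F_{\rho(e)}$. The idea is to compare the two faces $\tilde\rho g_e(F)$ and $g_{\rho(e)}(F)$. Both contain $\rho(e)$, and both lie in planes through $\rho(e)$ that are at the same distance from $o$, because all face planes of a noble polyhedron are equidistant from $o$ (this is the observation used in Theorem \ref{thm:nobledual}). Through a fixed chord of the circumsphere there are at most two planes tangent to a given concentric sphere, so $\tilde\rho(\operatorname{aff}F_e)$ is either $\operatorname{aff} F_{\rho(e)}$ or the mirror image of $\operatorname{aff} F$ in the other of these two planes; the latter is $\operatorname{aff} F$ itself. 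The second option is excluded by Definition \ref{def:nondegenerate}, since $\tilde\rho$ preserves $\operatorname{aff} F$ and $\operatorname{aff} F_e\neq\operatorname{aff} F$. Hence $\tilde\rho(\operatorname{aff}F_e)=\operatorname{aff}F_{\rho(e)}$.

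It remains to upgrade equality of planes to equality of faces. Both $\tilde\rho(F_e)$ and $F_{\rho(e)}$ are isometric copies of $F$ lying in the same plane and sharing the edge $\rho(e)$. I would use Theorem \ref{thm:allfacesinplane} and the adjacency-graph description in Lemma \ref{thm:polyfromcycle} to argue that the copy of $F$ sitting on $\rho(e)$ in that plane is unique up to the face's own symmetries. Once this single step holds, it holds for every face whose symmetry is conjugated from $\rho$. Connectedness of $\mathcal{P}$ (Definition \ref{def:abstractpoly}) then propagates face by face, showing that $\tilde\rho$ maps every face to a face and hence every vertex to a vertex.

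The main obstacle is this uniqueness step, together with the need for $\tilde\rho$ to carry over the vertex set. Both require controlling the points of $V$ in a face plane without already knowing that $\tilde\rho(V)=V$. If the direct argument stalls, I would fall back on the finite classification. The stabilizer of $F$ in $\Gamma(\mathcal{P})$ has order $\varphi$, as in Lemma \ref{thm:prismaticformula}. I would check, using Corollary \ref{thm:prismaticN} and the list of point groups acting transitively on each orbit type, that $\varphi$ always equals the order of the full symmetry group of the polygon $F$. Since the stabilizer is contained in the symmetry group of $F$, equal orders force equality, and this gives the statement.
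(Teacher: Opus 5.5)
Your plane-level step is sound: all face planes are at the same distance from $o$, $\tilde\rho$ preserves $\operatorname{aff}F$, and nondegeneracy then forces $\tilde\rho(\operatorname{aff}F_e)=\operatorname{aff}F_{\rho(e)}$. The propagation by connectedness is also fine in principle, provided the local step is proved for \emph{every} symmetry of $F$. The symmetry you need at $F_e$ is $g_{\rho(e)}^{-1}\tilde\rho g_e$, not a conjugate of $\rho$.

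The gap is the local step you flag yourself, and it is not a technicality: it is the whole content of the statement, and plane information cannot supply it. For an edge $f$ of $F$, let $M_f$ be the reflection in the plane through $f$ and $o$, and let $B_f$ be the reflection in the perpendicular bisector plane of $f$ (which contains $o$). Put $e'=\rho(e)$ and $\Pi=\operatorname{aff}F_{e'}$. Then $M_{e'}(F)$ and $M_{e'}B_{e'}(F)$ (the image under the half-turn about the line through $o$ and the midpoint of $e'$) are both congruent copies of $F$. Both are inscribed in the circle $\Pi\cap S$ and both have $e'$ as an edge. They coincide only when $B_{e'}$ is a symmetry of $F$, and further candidates appear if $F$ has other edges of the same length. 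Which copy is the actual face is decided by how $\Gamma(\mathcal{P})$ glues $F$ to its neighbours. For instance, suppose $\rho$ is a reflection with $e'\neq e$, the neighbour across $e$ is $M_e(F)$, and the neighbour across $e'$ is $M_{e'}B_{e'}(F)$. Then $\tilde\rho(F_e)=\tilde\rho M_e\tilde\rho^{-1}(F)=M_{e'}(F)\neq F_{e'}$. Ruling out such a $\rho$-asymmetric gluing is exactly what must be proved, and nothing you cite does it. Theorem \ref{thm:allfacesinplane} and Lemma \ref{thm:polyfromcycle} constrain which vertex sets can carry faces, not how faces are attached along edges. You also cannot apply them to $\tilde\rho(F_e)$, since you do not know its vertices lie in $V$.

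The fallback fails for a different reason. Corollary \ref{thm:prismaticN} is a counting identity expressing $\varphi$ through $N$ and $k$. It carries no information about the order of the geometric symmetry group of the polygon $F$, which depends on metric coincidences: nothing in $(N,k,\varphi)$ stops a face with $\varphi=1$ from being, say, an isosceles trapezoid. Moreover, in the prismatic case the paper uses this very theorem in Theorem \ref{thm:quadsymmetry} to exclude $\varphi=1$, so leaning on the prismatic classification would be circular.

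For comparison, the paper's proof is a one-line conjugation argument: every face is $g(F)$, and $g\rho g^{-1}$ stabilizes $g(F)$. That shows $g\rho(F)$ is a face, not $\rho g(F)$. Passing from one to the other needs $\tilde\rho\,\Gamma(\mathcal{P})\,\tilde\rho^{-1}=\Gamma(\mathcal{P})$. This normalizing condition is equivalent to the theorem: if it holds, then $\tilde\rho g(F)=(\tilde\rho g\tilde\rho^{-1})(F)$ is a face for every $g$. It is the same missing ingredient as in your local step, so the paper's route does not close your gap either.
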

\begin{proof}
    As $\mathcal{P}$ is noble and therefore face-transitive, all faces of $\mathcal{P}$ are of the form $g(F)$ for some $g \in \Gamma(\mathcal{P})$. However, $\rho(g(F))$ will be a face of $\mathcal{P}$ if and only if the conjugation $g(\rho(g(g^{-1}(F))))=g(\rho(F))=g(F)$ is also a face of $\mathcal{P}$, which it clearly is. Therefore $\rho$ maps faces of $\mathcal{P}$ to faces of $\mathcal{P}$, and clearly because of this it will map edges to edges and vertices to vertices as well.
\end{proof}

Note that the previous lemma works for all noble polyhedra, however it is only useful in the prismatic case so we include it here instead of in previous sections.

\begin{theorem}
    Let $\mathcal{P}$ be a prismatic noble polyhedron. If $N = k = 4$, then $\varphi = 2$.
\label{thm:quadsymmetry}
\end{theorem}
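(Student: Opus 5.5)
Since $N=k=4$, Corollary \ref{thm:prismaticN} leaves two possibilities. Either $N=\frac{k}{\varphi}$, which forces $\varphi=1$. Or $N=2\frac{k}{\varphi}$, which forces $\varphi=2$; this case can only occur for a P$n$ orbit under $*22n$ or an A$n$ orbit under $2{*}n$. So the plan is to rule out $\varphi=1$, i.e.\ to show that the stabilizer of a face in $\Gamma(\mathcal{P})$ cannot be trivial.

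\emph{Reduction to the full symmetry group.} The stabilizer in Lemma \ref{thm:prismaticformula} is taken in $\Gamma(\mathcal{P})$, the full symmetry group. Suppose $\varphi=1$. Then Corollary \ref{thm:prismaticN} tells us $\Gamma(\mathcal{P})$ is not $*22n$ acting on P$n$, nor $2{*}n$ acting on A$n$. By Table \ref{table:prismatictypes}, the vertex set is then one of P$n$, A$n$, sA$n$ (with group of order $2n$), or tP$n$, tA$n$ (with group of order $4n$). In each case, compute $f=|\Gamma|/\varphi$ and $v$, and check $fk=Nv$:
\begin{itemize}
\item P$n$, A$n$, sA$n$: $f=2n$, $v=2n$, and $2n\cdot 4=4\cdot 2n$. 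This is consistent.
\item tP$n$, tA$n$: $f=4n$, $v=4n$. This is also consistent.
\end{itemize}
Counting alone therefore does not exclude $\varphi=1$, and a geometric argument is needed.

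\emph{Geometric step.} Take a face $F$ and an edge $e$ of $F$. By Theorem \ref{thm:noedgesinbaseplane}, $e$ joins the two base planes, so $F$ alternates between them, visiting $a,b,a',b'$. Let $F'$ be the other face on $e$, and let $\rho\in\Gamma(\mathcal{P})$ be a symmetry with $\rho(F)=F'$. Then $\rho$ sends some edge of $F$ to $e$. I would show that the symmetries mapping one inter-base edge to another are very restricted: a rotation about the main axis, a half-turn about a horizontal axis, or a reflection. I would then argue that $\rho(e)=e$ forces $\rho$ to be either a half-turn about a horizontal axis through the midpoint of $e$ or a reflection fixing $e$.
\begin{itemize}
\item If $\rho$ is a reflection fixing $e$ pointwise, its mirror contains $e$ and the origin. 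The argument is as in Theorem \ref{thm:noedgesinbaseplane}: if the mirror were the face plane of $F$ we would get coplanar adjacent faces, so it is not; but then the fixed points of $\rho$ are exactly the mirror, which contains the edge.
\item Otherwise, pass to the quotient of the edges of $\mathcal{P}$ by the action. With $\varphi=1$ the $4$ edges of $F$ lie in $4/2=2$ edge orbits, since each edge is shared by two faces. I would then exhibit a nontrivial symmetry of $F$ directly. Since $F$ has edges $e_1,e_2,e_3,e_4$ and these fall into two orbits, consider the element $\sigma$ mapping $F$ to the face across $e_1$. Combined with the alternating structure, $\sigma$ or its composition with the symmetry across $e_3$ should fix $F$ setwise.
\end{itemize}

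\emph{Main obstacle and alternative.} The main obstacle is exactly this second bullet: producing a nontrivial element of the face stabilizer from adjacency data. If that fails, I would fall back on the explicit case check, which is finite in type. Enumerate each orbit type together with each candidate group of order $2n$ or $4n$ having $\varphi=1$. Use the fact that the four vertices of $F$ alternate between the base planes, and that a plane through two points of each base plane meets each base plane in a chord; two such chords must be parallel, since both lie in the face plane and the two base planes are parallel. Parallel chords of a regular $n$-gon arrangement (or of the tP$n$/tA$n$ analogues) are exchanged by a reflection or a half-turn. This element lies in the vertex group and preserves $F$. Since it maps faces to faces, it lies in $\Gamma(\mathcal{P})$ (cf.\ Theorem \ref{thm:facesymmetryextends}), so $\varphi\ge 2$, which is a contradiction.
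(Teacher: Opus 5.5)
\textbf{Verdict: there is a gap.} Your main route does not close, and your fallback has the right key idea (a different and shorter route than the paper's), but it misjustifies the step that lifts the face mirror into $\Gamma(\mathcal{P})$.

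\textbf{The main route.} As you acknowledge, it does not close, and several of its steps are not just unfinished but unsupported.
\begin{enumerate}
\item Nothing establishes $\rho(e)=e$. The paper obtains this only after showing that the four edges of $F$ have pairwise distinct lengths.
\item Your first bullet stops without a contradiction. A reflection in a vertical plane through a vertical edge is a priori possible. The paper excludes it only jointly with the other $\rho_i$, via the orientation of $\rho_1\rho_2\rho_3\rho_4$.
\item The count of exactly $4/2=2$ edge orbits is wrong in general. With $\varphi=1$, an edge whose stabilizer swaps its two faces is alone in its orbit among the edges of $F$. So anywhere from two to four edge orbits meet $F$; the paper's scenario, in which every $\rho_i$ fixes $e_i$, has four.
\end{enumerate}

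\textbf{The key idea in your fallback.} The face plane cuts the two base planes in parallel lines, and the two base circles are coaxial with equal radii. So the vertical plane $\Pi$ through the main axis, perpendicular to the chords $aa'$ and $bb'$, bisects both chords. The reflection $\sigma_\Pi$ sends $(a,b,a',b')$ to $(a',b',a,b)$, which is a nontrivial symmetry of $F$. Note that it fixes each chord rather than exchanging them; a half-turn exchanging the chords exists only when they are congruent.

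This is a genuinely different and much shorter route than the paper's. It shows that opposite edges of $F$ are \emph{always} equal. Hence the paper's argument always terminates in its case $e_1=e_3$, and its subsequent analysis of the $\rho_i$ is never actually reached. That case is closed by the same lifting principle you need.

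\textbf{The gap: lifting $\sigma_\Pi$ into $\Gamma(\mathcal{P})$.}
\begin{enumerate}
\item Your sentence ``this element lies in the vertex group'' is false for tP$n$ and tA$n$. The $2n$ points in a base plane split into two regular $n$-gons, at angles $\pm\alpha+2\pi m/n$. For a chord joining two points of the same $n$-gon, $\Pi$ sits at angle $\alpha+\pi s/n$, which is not a mirror of the orbit.
\item Even where $\sigma_\Pi$ does lie in the vertex group, that does not place it in $\Gamma(\mathcal{P})$ when $\Gamma(\mathcal{P})$ is a proper subgroup of it.
\item If you simply invoke Theorem~\ref{thm:facesymmetryextends}, as the paper does, the vertex-group remark becomes superfluous and the proof is one line. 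Be aware, though, that the argument given for that theorem only works when $\sigma$ normalizes $\Gamma(\mathcal{P})$, since $\sigma(gF)=(\sigma g\sigma^{-1})(F)$.
\end{enumerate}

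\textbf{A self-contained repair.}
\begin{enumerate}
\item[(i)] A genuinely twisted sA$n$, where the offset of the bottom $n$-gon is not a multiple of $\pi/n$, has no plane through two vertices of each base plane. Parallel chords would force that offset into $\frac{\pi}{n}\mathbb{Z}$, so sA$n$ admits no quadrilateral faces.
\item[(ii)] For P$n$ and A$n$, $\sigma_\Pi$ lies in $*22n$, respectively $2{*}n$. In these groups every vertex-transitive subgroup has index at most $2$, hence is normal, so $\sigma_\Pi$ maps faces to faces.
\item[(iii)] For tP$n$ and tA$n$, suppose the chords join points of different $n$-gons. Then $\sigma_\Pi$ is a mirror of the orbit, and $\Gamma(\mathcal{P})$ is the full group $*22n$, respectively $2{*}n$, so $\sigma_\Pi\in\Gamma(\mathcal{P})$. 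If instead the chords join points of the same $n$-gon, then $F$ lies in one of the two sub-orbits that are swapped by the non-identity coset of an index-$2$ subgroup. In that case $\Gamma(\mathcal{P})(F)$ is disconnected and is not a polyhedron.
\end{enumerate}
In the cases where $\sigma_\Pi\in\Gamma(\mathcal{P})$, we get $\varphi\ge 2$, contradicting $\varphi=1$; in the remaining cases no such polyhedron exists.
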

\begin{proof}
    As previously mentioned, by Corollary \ref{thm:prismaticN} we have either $N = \frac{k}{\varphi}$ or $N = 2\frac{k}{\varphi}$ for all prismatic noble polyhedra. Taking $N = k = 4$, it follows that either $\varphi = 2$ or $\varphi = 1$, with the latter indicating that the faces of $\mathcal{P}$ have no symmetry under $\Gamma(\mathcal{P})$; we show that this case is impossible. All edges must also contain one vertex from each base plane as a consequence of Theorem \ref{thm:noedgesinbaseplane}.
    
    Let $F$ be a face of $\mathcal{P}$ connected in order by edges $e_1, e_2, e_3, e_4$. We argue that no two of these edges may have equal length. Without loss of generality there are only two possible cases if it was true that two edges of $F$ were the same length:

    \begin{itemize}
        \item $e_1 = e_2$. $e_1$ and $e_2$ together contain three vertices $v_1, v_2, v_3$, two of which (say, $v_1$ and $v_2$) are in the same base plane. The final vertex $v_4$ of $F$ must lie on the intersection between the plane $p = \operatorname{aff}(\{v_1,v_2,v_3\})$, the circumsphere of $\mathcal{P}$, and the other base plane containing $v_3$. However, the only intersection point between these three surfaces is $v_3$, so there is no possible location where $v_4$ could exist.
        \item $e_1 = e_3$. In this case, $e_1$ and $e_3$ share no vertices so the locations of the two edges completely determine $F$. No matter how $e_3$ is placed relative to $e_1$, $F$ will either have $180^\circ$ rotational symmetry about its center or a reflection symmetry due to both edges having one vertex in each base plane. Both cases contradict the assumption that $\varphi = 1$ by Theorem \ref{thm:facesymmetryextends}.
    \end{itemize}
    
    As $\mathcal{P}$ is noble, for each edge there must exist isometries $\rho_1, \rho_2, \rho_3, \rho_4 \in \Gamma(\mathcal{P})$ where $\rho_i$ maps $F$ to the unique face adjacent to $F$ that contains $e_i$. Because all edges are unequal in length, it follows that $\rho_i(e_i) = e_i$ for each $i$. There are three possible isometries for each symmetry $\rho_i$: reflection about the plane containing $e_i$ and the main axis of symmetry, reflection about the plane through the midpoint of $e_i$ and perpendicular to the main axis of symmetry, or reflection about the line containing the midpoint of $e_i$ and perpendicular to the main axis of symmetry. All reflections of a prismatic symmetry group are either perpendicular to or contain the main axis of symmetry, so in either case if $\rho_i$ is a reflection $e_i$ must be parallel to the axis of symmetry (it cannot be perpendicular due to Lemma 4.8). We then have three possibilities for the nature of the $\rho_i$:

    \begin{itemize}
        \item If two of the $\rho_i$ are reflections, then $F$ has two edges parallel to the main axis of symmetry, and the resulting bowtie-shaped face has $180^\circ$ symmetry; this is a contradiction again due to Theorem \ref{thm:facesymmetryextends}.
        \item If one of the $\rho_i$ is a reflection, then by vertex-transitivity each vertex must be adjacent to each of the four types of edges. This implies that $\sigma = \rho_1\rho_2\rho_3\rho_4$ is the identity, however $\sigma$ is not an orientation-preserving transformation and so this cannot be the case. 
        \item If all of the $\rho_i$ are rotations, then we must generate the group $\Gamma(\mathcal{P})$ solely from rotations. The rotation axes of the $\rho_i$ all lie in a common plane parallel to the base planes, implying that the midpoints of the $e_i$ are collinear. Because $F$ is concyclic, this implies that $F$ has a mirror symmetry $\rho'$, again leading to a contradiction.
    \end{itemize}\vspace{-0.75cm}
\end{proof}
\begin{corollary}
    Any prismatic noble polyhedron with quadrilateral faces either has vertices in the Pn orbit type or the An orbit type. In these cases, their symmetry groups must be $*22n$ and $2{*}n$ respectively.
\label{thm:1dtypereduction}
\end{corollary}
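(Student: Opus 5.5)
The plan is to combine Theorem \ref{thm:quadsymmetry} ($\varphi = 2$) with Corollary \ref{thm:prismaticN} and Table \ref{table:prismaticorders}, and then rule out the remaining orbit types by a counting argument. Since $N = k = 4$ and $\varphi = 2$, Corollary \ref{thm:prismaticN} gives $N = k/\varphi = 2$ in the generic case. This contradicts $N \ge 3$ from Lemma \ref{thm:prismaticformula}. Hence we must be in one of the exceptional cases, where $N = 2k/\varphi = 4$. These are exactly a P$n$ orbit under $*22n$ or an A$n$ orbit under $2{*}n$.

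To justify this cleanly, I would run through every pair (orbit type, transitive group) from Table \ref{table:prismatictypes} and compute $N = |\Gamma|k/(\varphi v)$ directly with $k=4$, $\varphi=2$.
\begin{itemize}
\item For P$n$ and A$n$ we have $v = 2n$. Groups of order $2n$ ($n*$, $22n$, $n\times$) give $N = 2n\cdot 4/(2\cdot 2n) = 2$, which is impossible. Groups of order $4n$ ($*22n$, $2{*}n$) give $N = 4$.
\item For sA$n$, the only group is $22n$ of order $2n$ with $v = 2n$, so $N = 2$, which is impossible.
\item For tP$n$ and tA$n$, we have $v = 4n$ and groups of order $4n$, so $N = 2$, which is again impossible.
\end{itemize}
Only P$n$ under $*22n$ and A$n$ under $2{*}n$ survive. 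For these we also need $\Gamma(\mathcal{P})$ to equal that group exactly. If the full symmetry group were a smaller transitive group, the same formula with $\varphi=2$ would force $N=2$. If the face stabilizer in the smaller group were trivial ($\varphi=1$), this would contradict Theorem \ref{thm:quadsymmetry} applied to the true symmetry group.

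One subtlety needs checking. The symmetry group $\Gamma(\mathcal{P})$ of the polyhedron could a priori be any point group acting transitively on the vertices, not the full symmetry group of the orbit. Theorem \ref{thm:quadsymmetry} was proven with respect to $\Gamma(\mathcal{P})$ itself, so $\varphi = 2$ refers to the stabilizer in $\Gamma(\mathcal{P})$, and the counting above is consistent.

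The main obstacle is the degenerate $n=2$ cases, namely A2, sA2 and tP2, where the main axis is not unique. There an orbit could be listed under several orbit types, or the true symmetry group could be larger. I would handle these by appealing to the earlier remark that we may assume a unique main axis (tP2 was excluded there). I would also note that any remaining coincidences only enlarge the group, and the counting applies to whatever $\Gamma(\mathcal{P})$ actually is.
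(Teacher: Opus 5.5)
Your proposal is correct and follows the paper's own argument. You use Theorem \ref{thm:quadsymmetry} to get $\varphi=2$, observe that the generic case $N=k/\varphi$ of Corollary \ref{thm:prismaticN} would give $N=2<3$, and conclude that the exceptional case $N=2k/\varphi$ must hold, which occurs only for P$n$ under $*22n$ and A$n$ under $2{*}n$; your explicit evaluation of $N=|\Gamma|k/(\varphi v)$ across Tables \ref{table:prismatictypes} and \ref{table:prismaticorders} simply carries out the manual check that the paper leaves to the reader.
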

\begin{proof}
    By Corollary \ref{thm:prismaticN}, we must have $N=\frac{k}{\varphi}$ or $2\frac{k}{\varphi}$. If $N=4, k=4,$ and $\varphi=2$, the latter must be the case. It may be manually checked that the specified situations are the only cases where $N=2\frac{k}{\varphi}$.
\end{proof}

\begin{theorem}
    Any prismatic noble polyhedron $\mathcal{P}$ with quadrilateral faces is a stephanoid.
\label{thm:quad1d}
\end{theorem}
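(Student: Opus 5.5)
We assemble the pieces already established. Let $\mathcal{P}$ be a prismatic noble polyhedron with quadrilateral faces, so $k=4$. By the duality argument, $N\in\{3,4\}$. If $N=3$, the dual has triangular faces and is covered by Theorem \ref{thm:prismatictrianglefaced}. Then $\mathcal{P}$ would be the dual of a disphenoid, which again has triangular faces; this contradicts $k=4$. Hence $N=k=4$.

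Theorem \ref{thm:quadsymmetry} then gives $\varphi=2$. Corollary \ref{thm:1dtypereduction} tells us that the vertex set lies in a P$n$ orbit type with $\Gamma(\mathcal{P})=*22n$, or in an A$n$ orbit type with $\Gamma(\mathcal{P})=2{*}n$. As discussed before Theorem \ref{thm:noedgesinbaseplane}, we may assume the main axis is unique.

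By Theorem \ref{thm:noedgesinbaseplane}, every edge joins the two base planes, so consecutive vertices of a face $F$ alternate between the base planes. In the P$n$ case, after applying a symmetry of $*22n$ we may take $F$ to visit $a_1,b_i,a_j,b_k$ in order. In the A$n$ case the base planes consist of the odd and even indexed vertices, so after a symmetry $F$ visits $a_1,a_{2i},a_{2j+1},a_{2k}$. These are exactly the hypotheses on vertex order in Lemmas \ref{thm:prismaticcondition} and \ref{thm:antiprismaticcondition}.

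It remains to show that the nontrivial element $\rho$ of the face stabilizer (order $\varphi=2$) is a reflection in a plane containing the main axis. This is the main obstacle. The stabilizer element fixes the face plane and permutes the four vertices as a symmetry of the quadrilateral, so it is either a reflection or a half-turn. We rule out the other possibilities as follows.

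\begin{itemize}
\item A reflection in a plane perpendicular to the main axis would fix the face plane. It would therefore either be that plane itself, which is impossible since the face has points off it, or it would force the face plane to be parallel to the main axis. In the latter case, combined with the swapping of base planes, we get an edge perpendicular to the axis or coplanar adjacent faces. I would argue this as in the proof of Theorem \ref{thm:noedgesinbaseplane}.
\item A half-turn about the face's normal axis through its center would have to pass through the origin. Then it must be a rotation axis of the group, and it is either the main axis (impossible, since the face is not a base plane) or a perpendicular 2-fold axis. In the latter case it swaps the base planes. For a four-cycle alternating between the base planes, a rotation by $180^\circ$ of the cycle maps $v_1\mapsto v_3$, which lies in the same base plane, contradicting the swap.
\item A half-turn about an in-plane axis of $F$ is not a symmetry of the polygon preserving its plane pointwise in the required way. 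More precisely, a symmetry of a planar polygon restricted to its plane is a reflection or rotation within that plane. So the only orientation-reversing in-plane options are reflections of $\mathbb{R}^3$ through planes perpendicular to the face.
\end{itemize}

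Hence $\rho$ is a reflection whose mirror contains the main axis. The lemmas then identify $\mathcal{P}$ as $\operatorname{PC}(n,j-1,i-1)$ or $\operatorname{AC}(n,j,2i-1)$, that is, a stephanoid.
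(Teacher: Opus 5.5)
Your overall route is the paper's: $\varphi=2$ from Theorem~\ref{thm:quadsymmetry}, the group reduction of Corollary~\ref{thm:1dtypereduction}, alternation of edges from Theorem~\ref{thm:noedgesinbaseplane}, a case analysis of the involution $\rho$, and then Lemmas~\ref{thm:prismaticcondition} and~\ref{thm:antiprismaticcondition}. However, the step you identify as the main obstacle is not handled correctly.

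In your second bullet you assume that a geometric half-turn of $F$ acts on its $4$-cycle as the shift $v_1\mapsto v_3$. That holds for convex quadrilaterals. For a self-intersecting face, the half-turn is only some fixed-point-free involution of the cycle. Since it swaps the base planes, it must in fact be an edge-midpoint reflection $v_1\leftrightarrow v_2$, $v_3\leftrightarrow v_4$. This is not a contradiction.

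This case genuinely occurs in the P$n$ case. Consider the vertical plane through $a_s,a_t,b_s,b_t$. The face $a_s,b_t,a_t,b_s$ is preserved by the half-turn about the horizontal $2$-fold axis normal to that plane, which sends $a_s\mapsto b_t$ and $a_t\mapsto b_s$.

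Your first bullet fails in the same way. A horizontal mirror preserving $F$ is also an involution swapping the two alternating vertex classes, so it too acts on the cycle as an edge-midpoint reflection. It therefore makes two opposite edges \emph{parallel} to the main axis, not perpendicular to it. Nothing forces adjacent faces to be coplanar. You arrive at exactly the same crossed rectangle with two vertical sides, so appealing to the proof of Theorem~\ref{thm:noedgesinbaseplane} does not close the case.

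The missing idea is the one the paper uses: in both cases, concyclicity plus alternation force $F$ to be a crossed rectangle, and such a face has too many symmetries. Concretely, the face above is stabilized in $*22n$ by three non-identity elements:
\begin{itemize}
\item the horizontal mirror;
\item the half-turn about the face normal;
\item the vertical mirror bisecting $a_sa_t$.
\end{itemize}
Hence $\varphi\ge 4$, contradicting $\varphi=2$; equivalently, Corollary~\ref{thm:prismaticN} would give $N\le 2$.

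In the A$n$ case neither configuration can occur. Both force a vertical face plane, and a vertical plane through two vertices of one base plane contains no vertex of the other, since no vertex lies directly below another.

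With these two bullets replaced by this argument, the rest of your proof goes through and yields $\operatorname{PC}(n,j-1,i-1)$ or $\operatorname{AC}(n,j,2i-1)$.
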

\begin{proof} 
    We consider the Pn and An cases separately. 

    In the Pn case, Lemma \ref{thm:noedgesinbaseplane} implies that the faces of $\mathcal{P}$ self-intersect and its edges alternate between the base planes. Consider one of the four faces of $\mathcal{P}$ containing the vertex $a_1$. This face must visit the vertices $a_1,b_i,a_j,b_k$ in order for some $i,j,k$ as the edges of $\mathcal{P}$ alternate between the base planes. Then for Lemma \ref{thm:prismaticcondition} to apply, we must show that the face has a mirror symmetry containing the main axis of symmetry and that the polyhedron is noble under $*22n$ symmetry - the latter is given by Corollary \ref{thm:1dtypereduction}.

    Theorem \ref{thm:quadsymmetry} shows that any face $F$ of $\mathcal{P}$ must have exactly one nonidentity symmetry $\rho$, and therefore that symmetry must be an involution. As $\rho \in \Gamma(\mathcal{P}) = *22n$ for some $n$, we have 3 possibilities for $\rho$ as $F$ is not within a base plane: reflection parallel to a base plane, a $180^\circ$ rotation around the center of $F$, or a reflection containing the main axis of symmetry. $F$ is concyclic due to $\mathcal{P}$ being vertex-transitive, so in either of the first two cases the only possibility for the face of $F$ is a crossed rectangle as the vertices of $\mathcal{P}$ lie within one of the two base planes. However, $\varphi > 2$ in these cases as crossed quadrilaterals have more than one nonidentity symmetry. Lemma \ref{thm:prismaticcondition} shows that $\mathcal{P}$ is a stephanoid.

    In the An case, Lemma \ref{thm:noedgesinbaseplane} again implies that its faces self-intersect and its edges alternate between the base planes. Consider one of the four faces of $\mathcal{P}$ that contains the vertex with label $a_1$. This face $F$ must visit the vertices $a_1,a_i,a_j,a_k$ in order for some $i,j,k$, where $i$ and $k$ are even and $j$ is odd to have edges alternate between base planes. In order for Lemma \ref{thm:antiprismaticcondition} to apply, we must show that $F$ contains a mirror symmetry containing the axis of symmetry and $\mathcal{P}$ is noble under $2{*}n$ symmetry - the latter is given by Corollary \ref{thm:1dtypereduction}. Here, the same logic as used previously applies in order to show $F$ has a mirror symmetry containing the main axis of symmetry, as $2{*}n \subseteq *22n$. Therefore Lemma \ref{thm:antiprismaticcondition} shows that $\mathcal{P}$ is a stephanoid.
\end{proof}

\begin{corollary}
    Any prismatic noble polyhedron $\mathcal{P}$ is either a disphenoid or a stephanoid.
\label{thm:prismaticenumeration}
\end{corollary}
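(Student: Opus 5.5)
The corollary follows by assembling the case analysis already done in this section. I would organize it around the pair $(k,N)$: the number of sides per face and the number of faces at each vertex.

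First I restrict the possible values. As noted after the partition of $P(V)$, only the two base planes can contain more than four points of a prismatic orbit. A face plane contains at most four vertices unless it is a base plane, and a base plane cannot carry the face of a face-transitive polyhedron. So $k\in\{3,4\}$.

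Next I pass to the dual. By the theorem that the dual of a prismatic noble polyhedron is a prismatic noble polyhedron, the dual $\mathcal{Q}$ has faces with $N$ sides. Applying the same bound to $\mathcal{Q}$ gives $N\in\{3,4\}$. Lemma \ref{thm:prismaticformula} gives $N\ge 3$ independently.

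Now I split into cases. If $k=3$, Theorem \ref{thm:prismatictrianglefaced} shows $\mathcal{P}$ is a disphenoid. If $N=3$, the dual $\mathcal{Q}$ is a prismatic noble polyhedron with triangular faces, so $\mathcal{Q}$ is a disphenoid. Since duality is an involution, $\mathcal{P}$ is the dual of a disphenoid. That dual is again a tetrahedron with the same symmetry, hence a disphenoid. The remaining case is $k=N=4$, and there Theorem \ref{thm:quad1d} shows $\mathcal{P}$ is a stephanoid.

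The step that needs care is $N=3$: I must check that the dual of a disphenoid is a disphenoid. It suffices to note the following:
\begin{itemize}
\item the dual is abstractly a tetrahedron;
\item by Theorem \ref{thm:nobledual} it is noble;
\item it is prismatic, since it has the same symmetry group.
\end{itemize}
So it has triangular faces, and Theorem \ref{thm:prismatictrianglefaced} applies to it directly.

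In fact this case needs nothing extra. The proof of Theorem \ref{thm:prismatictrianglefaced} already shows that $k=3$ forces $N=3$. Dually, $N=3$ with $k=4$ is impossible, because the dual would then have $k=3$ and $N=4$. This leaves exactly the two cases $k=N=3$ and $k=N=4$, which finishes the proof.
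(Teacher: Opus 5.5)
Your proof is correct and follows the paper's argument: faces have at most four sides because only the base planes contain more than four points, triangular faces give a disphenoid by Theorem \ref{thm:prismatictrianglefaced}, and the case $k=N=4$ gives a stephanoid by Theorem \ref{thm:quad1d}. Your explicit duality step ruling out $k=4$, $N=3$ is a useful addition. The paper's proof of this corollary splits only on $k$ and relies on its earlier remark that duality handles $N=3$, even though Theorem \ref{thm:quad1d} implicitly needs $N=4$ through Theorem \ref{thm:quadsymmetry}.
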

\begin{proof}
    If $\mathcal{P}$ has triangular faces, Theorem \ref{thm:prismatictrianglefaced} shows it is a disphenoid. If $\mathcal{P}$ has quadrilateral faces, Theorem \ref{thm:quad1d} shows it is a stephanoid. $\mathcal{P}$ cannot have faces with a higher number of sides as these faces would need to be contained within the base planes of $\mathcal{P}$, which will always result in a degenerate noble polyhedron.
\end{proof}

\section{Enumeration}

\subsection{Implementation}

As previously mentioned, we will now use the tools set out in Section 3.3 to design and implement an algorithm that checks, for a given orbit type $T$, the set of all facetings for each equivalence class of $T$ under $\equiv_c$. The difficulty of the enumeration varies depending on the number of degrees of freedom of $T$.

\begin{itemize}
    \item If $T$ has zero degrees of freedom, the problem is trivial - Table \ref{table:nonprismatictypes} gives possible exact coordinates for the single orbit $V$ in the orbit type, and so we may simply check each plane $p \in P(V)$ for noble facetings using the adjacency graph strategy described in Section 3.1.
    \item If $T$ has one degree of freedom, we may symbolically determine $\operatorname{Conf}_T$ and $\operatorname{Copr}_T$, which gives exactly one equivalence class under $\equiv_c$ for each $s \in \operatorname{Copr}_T$, where the set of positive real roots of $s$ is an equivalence class. It is possible to abstractly determine the set of planes for each equivalence class, so we check for facetings in each of these and also the typical equivalence class. Once we have a set of abstract "candidate" noble polyhedra we may determine if they may be realized by approximating the positive real roots of $s$ using a root-finding algorithm; in our implementation the Jenkins-Traub algorithm is used. Each root $x$ corresponds to a valid realization of the abstract noble polyhedron onto the vertices of $T(x)$.
    \item If $T$ has two degrees of freedom, we may again symbolically determine $\operatorname{Conf}_T$ and $\operatorname{Copr}_T$. Once again, each member of $\operatorname{Copr}_T$ corresponds to an equivalence class under $\equiv_c$ and we may abstractly check if any member of that equivalence class has facetings (and the same applies for the typical equivalence class). However, there also exist additional equivalence classes at the intersections of one or more members of $\operatorname{Copr}_T$, and we solve the problem of determining which subsets of $\operatorname{Copr}_T$ intersect simultaneously using numerical methods. If we also symbolically calculate $\operatorname{Copr}^a_T$ and $\operatorname{Copr}^b_T$, we may numerically approximate the roots for each polynomial in $\operatorname{Copr}^a_T$ or $\operatorname{Copr}^b_T$ with a root-finding algorithm (again, we use the Jenkins-Traub algorithm in our implementation) until the intervals given by the upper and lower bounds no longer overlap with each other - this gives atlases of roots of both $a$ and $b$ for $T$. We may use these atlases of roots to exactly determine the remaining equivalence classes under $\equiv_c$, as if $I$ is an interval in an atlas of roots for $a$ and $J$ is an interval in an atlas of roots for $b$, then $I \times J$ may contain at most one intersection point between two or more members of $\operatorname{Copr}_T$. Determining if any intersection exists at the given point was done through the Wolfram Language's \texttt{Reduce} operation, which was sufficient to determine the existence of an intersection in all cases that were required. We may then find the facetings of each equivalence class, which completes the enumeration.
\end{itemize}

Our implementation uses a combination of Python's \texttt{sympy} and \texttt{numpy} libraries, which were used for faceting calculations and basic symbolic manipulation, as well as the Wolfram Language, which processed much of the more intensive symbolic and numeric operations. The full commentated codebase is available on a GitHub repository made by the present author \cite{noblepolyhedrarepository}, which additionally has a library of 3d model files in the \texttt{.off} file format for all noble polyhedra.

\subsection{Results}

The results of the computer-assisted enumeration are summarized in Tables \ref{table:firstappendixtable}-\ref{table:lastappendixtable}. Each noble polyhedron orbit is assigned a symbol $T$-$x$ where $T$ is an orbit type and $x$ is an integer; this indicates that $T$-$x$ is the $x$-th orbit of $T$ containing facetings, where orbits within $T$ are sorted lexicographically based on their locations (when $T$ has two degrees of freedom, the value of the $a$ parameter is treated as more significant). Each noble polyhedron is assigned a symbol $T$-$x.y$ which indicates it is the $y$-th faceting of the orbit $T$-$x$; here, the facetings are ordered by inradius from largest to smallest. In the event that multiple facetings have the same inradius, the polyhedra are sorted by the order in which they were found by the computer enumeration. The only exception to this naming scheme is when $T$ has zero degrees of freedom, where we simply label the orbit as $T$ rather than $T$-$1$ (there is only one possible orbit, so this loses no information).

In total, there are 146 nonprismatic noble polyhedra up to similarity which satisfy Definition \ref{def:nondegenerate}: one each in the orbit types T,O,C,tO,tC, and rC, 4 in the I orbit type, 6 in the ID orbit type, 7 in the D orbit type, 17 in the tI orbit type, 6 in the tD orbit type, 19 in the rD orbit type, 7 in the sC orbit type, 3 in the gC orbit type, 33 in the sD orbit type, and 38 in the gD orbit type. Notably, there are no noble polyhedra in the CO, tT, rT, rP, sT, gT, or gP orbit types. Along with Corollary \ref{thm:prismaticenumeration}, this proves Theorem \ref{thm:main}.

All but four noble polyhedra have polyhedral duals, with the exceptions being gD-19.1, gD-28.1, D-4, and D-5. The polyhedra in these four cases contain faces which are coplanar, but do not meet at an edge. As a result, their duals contain distinct vertices which coincide. These duals may either be interpreted as containing coinciding vertices, or as an object whose vertex figure is a compound polygon. We will call a shape with this property a \textit{fissary} polyhedron - note that these are only polyhedra when interpreted in the first sense, and even then these polyhedra are degenerate. The fissary noble polyhedra are listed below in Table \ref{table:fissarypolyhedra}, where the top row for each polyhedron interprets it as a degenerate polyhedron with coinciding vertices, and the bottom row interprets it as an object with a compound vertex figure. We assign each of these symbols, but these are somewhat arbitrary.
\vspace{-0.25cm}
\begin{table}[h!]
  \caption{The four fissary noble polyhedra.}
  \centering
  \begin{tabular}{ | l | l | l | l | l | l | }
    \hline
    Symbol & Vertices & Edges & Faces & Schläfli type & Dual \\ \hline \hline
    \multirow{2}{3em}{tI-F} & 120 & \multirow{2}{2em}{300} & \multirow{2}{2em}{120} & $\{5,5\}$ & \multirow{2}{3.5em}{gD-19.1} \\ \cline{2-2} \cline{5-5}
                            & 60 &  &  & $\{5,10/2\}$ & \\ \hline
    \multirow{2}{3em}{rD-F} & 120 & \multirow{2}{2em}{300} & \multirow{2}{2em}{120} & $\{5,5\}$ & \multirow{2}{3.5em}{gD-28.1} \\ \cline{2-2} \cline{5-5}
                & 60 &  &  & $\{5,10/2\}$ & \\ \hline
    \multirow{2}{3em}{D-F1} & 60 & \multirow{2}{2em}{120} & \multirow{2}{2em}{20} & $\{12,4\}$ & \multirow{2}{3.5em}{D-4} \\ \cline{2-2} \cline{5-5}
                & 20 &  &  & N/A & \\ \hline
    \multirow{2}{3em}{D-F2} & 60 & \multirow{2}{2em}{90} & \multirow{2}{2em}{20} & $\{9,3\}$ & \multirow{2}{3.5em}{D-5} \\ \cline{2-2} \cline{5-5}
                & 20 &  &  & N/A & \\ \hline
  \end{tabular}
\label{table:fissarypolyhedra}
\end{table}

D-F1 and D-F2 have faces that visit multiple vertices in identical locations, even though abstractly these vertices may be considered distinct. When interpreted in the latter sense, the faces of D-F1 and D-F2 are not abstract polygons, so the Schläfli type is undefined. This is not the case for tI-F or rD-F, whose faces are pentagons in either interpretation.

\subsection{Generalizations}
The fact that the definition of 'polyhedron' has been inconsistent from author to author naturally leads to the question of which additional noble polyhedra may exist when the requirements on what counts as a polyhedron are relaxed. We briefly discuss possible generalizations of the enumeration problem, the effectiveness of the above methods for these enumerations, and the feasibility of a full enumeration.

\begin{itemize}
    \item \textit{Allowing infinite polyhedra.} Allowing infinitely large polyhedra would almost surely result in many infinite sequences of polyhedra of arbitrary complexity due to the fact that polygons can have an unbounded and even infinite number of sides. Furthermore, due to the infinite number of vertices that these polyhedra would have, the methods used in this paper would not be applicable for computer search. Perhaps an enumeration would be more feasible if we restrict the number of sides that the face of a polyhedron may have (e.g. finding infinite noble polyhedra with specifically triangular faces).
    \item \textit{Allowing nonfaithful realizations.} Allowing nonfaithful polyhedra is uninteresting without further restrictions as any abstract noble polyhedron $(P,\le)$ may have all its vertices realized onto the origin, giving a valid nonfaithful "noble polyhedron". However, the existence of fissary polyhedra (as listed in Table \ref{table:fissarypolyhedra}) could hint at the existence of an interesting relaxation of this condition, as fissary polyhedra can be interpreted as nonfaithful noble polyhedra.
    \item \textit{Allowing skew faces.} Allowing nonplanar (skew) faces turns the enumeration into a purely combinatorial problem as determining if sets of points are coplanar is no longer a concern, and so the above methods will not be very helpful. Definitions allowing these, like the skeletal polyhedra defined in \cite{oldandnew}, have become popular when analyzing regular polytopes and led to interesting results (see e.g. \cite{schulteskeletal} and Chapter 5 of \cite{ARP}). It seems natural to extend the idea to noble polyhedra as well as they are a generalization of regular polyhedra. Unfortunately, because so many more generating faces are possible (in theory the faces of nonprismatic polyhedra could have up to 120 sides!) it's likely that there are many thousands, if not millions, of skew noble polyhedra up to the canonical mapping given in Definition \ref{def:canonicalmapping}. However, it would be very interesting if that was incorrect.
    \item \textit{Allowing coplanar faces sharing an edge.} Removing the final condition of Definition \ref{def:nondegenerate} may still lead to an interesting problem, as many of the theorems in this paper would still apply. The faceting algorithm would require some fundamental changes, and all of the possible new polyhedra under this definition would become a noble polyhedron or noble polyhedron compound under the current definition when coplanar faces are combined together at their shared edges.
    \item \textit{Reducing restrictions on the abstract structure.} Many past examples of noble polyhedra such as the V-faced polyhedra of \cite{hollowfaces} abstractly do not have a polyhedral structure due to the fact that their faces revisit vertices or some related issue. Some formalisms exist which allow these possibilities, such as the definition of polyhedra described in \cite{hollowfaces}, so separate enumerations under these conditions would be possible (although the total number of noble polyhedra would likely be much larger, similarly to when skew faces are allowed). A separate question that may also lead to an interesting problem is the enumeration of noble \textit{compound polyhedra} - compounds of polyhedra that are transitive on their faces and vertices. For both of these generalizations the methods used in this paper would likely be applicable, and the main required change would be to adjust the faceting algorithm.
    \item \textit{Investigating other ranks of polytopes.} One may more generally define a \textit{noble $n$-polytope} as an $n$-polytope that acts transitively on its vertices and facets. Even in the case of 4-polytopes, the computational difficulty of calculating facetings becomes immense, especially for 4-polytopes with a large number of vertices such as the 120-cell $\{5,3,3\}$, due to the fact that the number of possible planes increases exponentially as the dimension becomes large. A possible restriction that would result in an interesting question is the problem of enumerating the \textit{convex} noble $n$-polytopes for some $n > 3$. It's possible that a higher-dimensional generalization of the above methods could be used for this, but finding orbit types in dimensions greater than 3 would also be much more difficult as they are not all subgroups of Coxeter groups.
\end{itemize}

\section{Acknowledgments}

I would like to sincerely thank Dr. Jan Reimann for his guidance and feedback during the development of this proof. Additional thanks to the members of the Polytope Discord, who both inspired me to tackle this problem and kept me motivated to continue.

\pagebreak

\printbibliography

@book{ARP,
    author    = "Peter McMullen and Egon Schulte",
    title     = "Abstract Regular Polytopes",
    year      = "2002",
    publisher = "Cambridge University Press",
    series = "Encyclopedia of Mathematics and Its Applications",
    address   = "Cambridge, United Kingdom"
}

@article{hessinitial,
    author = "Edmund Hess",
    title = "{zwei Erweiterungen des Begriffs der regelm{\"a}ssigen K{\"o}rper}. ({German})
    [Two extensions to the concept of regular bodies]",
    journal = "der Gesellshaft zur Bedf{\"o}rdung der gesammten Naturwissenschaften",
    pages = "3-26",
    year = "1875"
}

@book{hess1,
    author = "Edmund Hess",
    title = "{Ueber die zugleich gleicheckigen und gleichfl{\"a}chigen Polyeder}. ({German})
    [About the simultaneously vertex-transitive and face-transitive polyhedra]",
    series = "zwei Erweiterungen des Begriffs der regelm{\"a}ssigen K{\"o}rper",
    year      = "1876",
    publisher = "Kay"
}

@article{hess2,
    author = "Edmund Hess",
    title = "{Ueber einige merkw{\"u}rdige nicht convexe Polyeder}. ({German})
    [About some strange nonconvex polyhedra]",
    journal = "der Gesellshaft zur Bedf{\"o}rdung der gesammten Naturwissenschaften",
    pages = "3-17",
    year = "1877"
}

@book{bruckner1,
    author = "Max Br{\"u}ckner",
    title = "{Vielecke und Vielflache: Theorie und Geschichte}. ({German})
    [Polygons and Polyhedra: Theory and History]",
    year      = "1900",
    publisher = "Teubner"
}

@article{bruckner2,
    author = "Max Br{\"u}ckner",
    title = "{{\"U}ber die diskontinuierlichen und nicht-konvexen gleicheckig-gleichfl{\"a}chigen Polyeder}. ({German})
    [On the nonconvex and vertex-transitive, face-transitive polyhedra]",
    journal = "Verh. des dritten Internat. Math.-Kongresses",
    year = "1905"
}

@article{bruckner3,
    author = "Max Br{\"u}ckner",
    title = "{{\"U}ber die gleicheckig-gleichfl{\"a}chigen diskontinuierlichen und nichtkonvexen Polyeder}. ({German})
    [On the equal-angle, equal-surface, discontinuous, and non-convex polyhedra]",
    year = "1906",
    journal = "Nova Acta Leop."
}

@article{bruckner4,
    author = "Max Br{\"u}ckner",
    title = "{Zur Geschichte der Theorie der gleicheckig
gleichfl{\"a}chigen Polyeder}. ({German})
    [On the history of the theory of vertex-transitive face-transitive polyhedra]",
    journal = "Unterrichtsblatter Math. Naturwiss.",
    year = "1907"
}

@article{oldandnew,
  title="Regular polyhedra—old and new",
  author="Gr{\"u}nbaum, Branko",
  journal="Aequationes mathematicae",
  volume="16",
  number="1",
  pages="1--20",
  year="1977",
  publisher="Springer"
}

@incollection{hollowfaces,
    author = "Branko Gr{\"u}nbaum",
    editor = "T. Bisztriczky and P. McMullen and R. Schneider and A. I. Weiss",
    title = "Polyhedra with hollow faces",
    booktitle = "POLYTOPES: Abstract, Convex and Computational",
    publisher = "Kluwer Academic Publishers",
    year = "1993",
    volume = "440",
    pages = "43-70",
}

@incollection{yourpolymypoly,
    author = "Branko Gr{\"u}nbaum",
    editor = "B. Aronov and S. Basu and J. Pach and M. Sharir",
    title = "Are your polyhedra the same as my polyhedra?",
    booktitle = "Discrete and Computational Geometry: The Goodman-Pollack Festschrift",
    publisher = "Springer",
    year = "2003",
    pages = "461-488"
}

@article{tetvolume,
    author = "H. B. Newson",
    title = "On the volume of a polyhedron.",
    journal = "Annals of Mathematics",
    year = "1899",
    pages = "108-110",
    volume = "1",
    number = "1/4",
    publisher={JSTOR}
}

@article{orbifoldnotation,
    author = "John H. Conway and Daniel H. Huson",
    title = "The Orbifold Notation for Two-Dimensional Groups",
    journal = "Structural Chemistry",
    year = "2002",
    volume = "13",
    number = "3-4",
    pages = "247-257"
}

@inproceedings{stella4Dexploration,
    author = "Ulrich Mikloweit",
    editor = "Carolyn Yackel and Eve Torrence and Kristóf Fenyvesi and Robert Bosch and Craig Kaplan",
    title = "Exploring Noble Polyhedra With the Program Stella4D",
    booktitle = "Bridges 2020 Conference Proceedings",
    year = "2020",
    volume = "25",
    pages = "257-264"
}

@article{schulteskeletal,
  title="Skeletal geometric complexes and their symmetries",
  author="Schulte, Egon and Weiss, Asia Ivi{\'c}",
  journal="arXiv preprint arXiv:1610.02619",
  year="2016"
}

@online{noblepolyhedrarepository,
  author = "Connor Hill",
  title = {GitHub repository: noble-tools-revised},
  url = {https://github.com/Plasmath/noble-tools-revised},
  year = "2025",
  note = {Accessed: 2025-09-11}
}

\pagebreak

\section*{Appendix A: List of Noble Polyhedra}



\section*{Appendix B: Noble Polyhedron Orbits}

\bgroup
\def\arraystretch{1.5}
 %
\egroup

\end{landscape}

\end{document}